\theoremstyle{plain}
\newtheorem{maintheorem}{Theorem}
\newtheorem{theorem}{Theorem}[section]
\newtheorem{proposition}[theorem]{Proposition}
\newtheorem{corollary}[theorem]{Corollary}
\newtheorem{lemma}[theorem]{Lemma}
\newtheorem{remark}[theorem]{Remark}
\newtheorem{example}{Example}
\theoremstyle{definition}
\newtheorem{conjecture}{Conjecture}
\newcommand{\bR}{{\bf R}}
\newcommand{\RR}{{\mathbb R}}
\newcommand{\CC}{{\mathbb C}}
\newcommand{\ZZ}{{\mathbb Z}}
\newcommand{\sS}{{\mathbb S}}
\newcommand{\fX}{{\EuScript{X}}}
\newcommand{\cM}{{\mathcal M}}
\newcommand{\cU}{{\mathcal U}}
\newcommand{\cO}{{\mathcal O}}
\newcommand{\cH}{{\mathcal H}}
\newcommand{\ov}{\overline}
\newcommand{\de}{{\delta}}
\newcommand{\vfi}{{\varphi}}
\renewcommand{\epsilon}{\varepsilon}
\newcommand{\qand}{\quad\text{and}\quad}
\newcommand{\wh}{\widehat}
\DeclareMathOperator{\diam}{diam}
\DeclareMathOperator{\spec}{sp}
\DeclareMathOperator{\dist}{dist}
\DeclareMathOperator{\supp}{supp}
\DeclareMathOperator{\per}{Per}
\DeclareMathOperator{\sing}{Sing}
\DeclareMathOperator{\crit}{Crit}
\DeclareMathOperator{\lip}{Lip}
\theoremstyle{remark}
\title[All Lyapunov exponents with constant sign]
{Sinks and sources for $C^1$ dynamics whose Lyapunov
  exponents have constant sign}
\thanks{
  The author was partially supported by
  CNPq-Brazil (grant 301392/2015-3) and FAPESB-Bahia-Brazil
  (grant PIE0034/2016).}
\date{\today}
\author[Vitor Araujo]{Vitor Araujo}
\email{vitor.araujo.im.ufba@gmail.com}
\urladdr{https://sites.google.com/site/vdaraujo99/}
\address{Instituto de Matem\'atica e Estat\'{\i}stica,
  Universidade Federal da Bahia, Av. Ademar de Barros s/n,
  40170-110 Salvador, Brazil.}
\keywords{non-zero Lyapunov exponents, same sign Lyapunov
  exponents, sink and sources, generic vector fields, linear
Poincar\'e flow}
\subjclass[2010]{Primary: 37D25. Secondary: 37D30, 37D20.}
\begin{document}

\begin{abstract}
  Let $f:M\to M$ be a $C^1$ map of a compact manifold $M$, with
  dimension at least $2$, admitting some point whose future trajectory
  has only negative Lyapunov exponents. Then this trajectory converges
  to a periodic sink. We need only assume that $Df$ is never the null
  map at any point (in particular, we need no extra smoothness
  assumption on $Df$ nor the existence of a invariant probability
  measure), encompassing a wide class of possible critical
  behavior. Similarly, a trajectory having only positive Lyapunov
  exponents for a $C^1$ diffeomorphism is itself a periodic repeller
  (source).
  
  Analogously for a $C^1$ open and dense subset of vector field on
  finite dimensional manifolds: for a flow $\phi_t$ generated by such
  a vector field, if a trajectory admits weak asymptotic sectional
  contraction (the extreme rates of expansion of the Linear Poincar\'e
  Flow are all negative), then this trajectory belongs either to the
  basin of attraction of a periodic hyperbolic attracting orbit (a
  periodic sink or an attracting equilibrium); or the trajectory
  accumulates a codimension one saddle singularity. Similar results
  hold for weak sectional expanding trajectories.

  Both results extend part of the non-uniform hyperbolic theory
  (Pesin's Theory) from the $C^{1+}$ diffeomorphism setting to $C^1$
  endomorphisms and $C^1$ flows. Some ergodic theoretical consequences
  are discussed. The proofs use versions of Pliss' Lemma for maps and
  flows translated as (reverse) hyperbolic times, and a result
  ensuring that certain subadditive cocycles over $C^1$ vector fields
  are in fact additive.
\end{abstract}

\maketitle

\tableofcontents

%%%%%%%%%%%%%%%%%%%%%%%%%%%%%%%%%%%%%%%%%%%%%%%%%%%%%%%%%%%%%%
\section{Introduction and statements of the results}
\label{sec:introd-statem-result}

In what follows $M$ is a connected compact finite
$d$-dimensional manifold $M$, with $d\ge2$, endowed with a
Riemannian metric $\langle\cdot,\cdot\rangle$ which induces
a norm $\|\cdot\|$ on the tangent bundle of $M$ and a
distance $\dist$ on $M$, and a volume form $m$ that we call
Lebesgue measure. For any subset $A$ of $M$ we denote by
$\ov{A}$ the (topological) closure of $A$.

We extend the following well-known result from Nonuniform
Hyperbolic (Pesin's) Theory (see e.g.~\cite[Corollary
S.5.2]{KH95} or~\cite[Corollary 15.4.2]{BarPes2007}) to
$C^1$ endomorphisms and $C^1$ singular vector fields.
\begin{theorem}
  Let $f$ be a H\"older-$C^1$ diffeomorphism of $M$ and
  $\mu$ a $f$-invariant ergodic probability measure such
  that all its Lyapunov exponents are negative
  (respectively, positive). Then $\supp\mu$ is an attracting
  (respectively, repelling) periodic orbit.
\end{theorem}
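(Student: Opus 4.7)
The plan is to combine Oseledets' multiplicative ergodic theorem, Pesin's stable manifold theorem (which needs the Hölder hypothesis on $Df$), and Poincaré recurrence, and then exploit ergodicity to collapse $\supp\mu$ onto a periodic orbit. I will only treat the negative exponents case; the positive case follows by applying the same argument to $f^{-1}$.

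First, by Oseledets' theorem, at $\mu$-a.e.\ point $x$ the Lyapunov exponents are defined and, by hypothesis, all strictly negative. Since every exponent is negative, the Oseledets stable subspace equals $T_xM$, so for some $\lambda>0$ there is a measurable function $C(x)$ with $\|Df^n(x)v\|\le C(x)e^{-\lambda n}\|v\|$ for all $v\in T_xM$ and $n\ge0$. Next I would invoke Pesin's local stable manifold theorem: the hypothesis that $Df$ is Hölder continuous is exactly what is needed to produce a measurable family of \emph{Pesin blocks} $\Lambda_k\uparrow\Lambda$, $\mu(\Lambda)=1$, on each of which $x\mapsto C(x)$ is bounded and a local stable disc $W^s_{\mathrm{loc}}(x)$ of uniform radius $r_k>0$ is defined; because the stable subspace is all of $T_xM$, the disc $W^s_{\mathrm{loc}}(x)$ is an open neighbourhood of $x$ in $M$, and $f$ uniformly contracts distances within it.

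The second step is a standard recurrence-plus-contraction argument. Fix $k$ with $\mu(\Lambda_k)>0$. By Poincaré recurrence, $\mu$-a.e.\ $x\in\Lambda_k$ returns to $\Lambda_k$ at times $n_j\to\infty$ with $\dist(f^{n_j}(x),x)$ arbitrarily small. For $j$ large enough, $f^{n_j}$ maps $W^s_{\mathrm{loc}}(x)$ strictly into itself as a contraction (the uniform contraction on the Pesin block composes with the smallness of the return to trap the image back inside the disc). Banach's fixed point theorem then yields a periodic point $p$ of period dividing $n_j$, sitting inside $W^s_{\mathrm{loc}}(x)$; the chain rule together with the contraction estimate forces $\|Df^{n_j}(p)\|<1$, so $p$ is a hyperbolic periodic sink.

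Finally, the basin $B(\mathcal{O}(p))$ of the periodic orbit of $p$ is $f$-invariant and open, and by construction contains the neighbourhood $W^s_{\mathrm{loc}}(x)$ of $x$. Since $x$ is a typical point of $\mu$, we have $\mu(B(\mathcal{O}(p)))>0$, so by ergodicity $\mu(B(\mathcal{O}(p)))=1$. But on the basin the trajectories are attracted to $\mathcal{O}(p)$, and the only $f$-invariant probability measure sitting on such a basin is the uniform measure on $\mathcal{O}(p)$; hence $\mu$ is this measure and $\supp\mu=\mathcal{O}(p)$, an attracting periodic orbit. The main obstacle I expect is a clean verification of the self-trapping step: one has to choose the return $n_j$ late enough that the contraction on the Pesin block dominates the (finite) distortion picked up while $f^i(x)$ is away from the block, and this requires uniform control of $C(x)$ on $\Lambda_k$ and of the Lipschitz deviation between $f^{n_j}$ and its linearisation on discs of radius $r_k$.
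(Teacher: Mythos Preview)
Your argument is correct and is precisely the classical Pesin-theory proof that the paper \emph{cites} (via \cite{KH95,BarPes2007}) rather than reproves; the paper's own contribution is to establish strengthenings (Theorem~\ref{mthm:negativexp-sink} and Corollary~\ref{cor:negativexp-sink}) that drop the H\"older hypothesis and allow $C^1$ endomorphisms. The route taken there is genuinely different from yours: instead of Pesin blocks and the stable manifold theorem, the paper applies Pliss' Lemma to the reversed sequence $-\ln\|Df(f^{m_i-j}x)\|$ to extract \emph{reverse hyperbolic times} $h$ at which $\|Df^j(f^hx)\|\le\lambda^j$ for a long range of $j$; a bare uniform-continuity estimate (Lemma~\ref{le:cballs}, using only that $(x,y)\mapsto\|Df(x)\|/\|Df(y)\|$ is uniformly continuous, whence the hypothesis $\inf\|Df\|>0$) then yields forward-contracting balls of a fixed radius $\delta_1$ at those times, after which essentially the same Banach fixed-point / nested-contraction step you describe produces the sink. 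So both proofs finish identically; the difference is the source of the uniform-size contracting neighbourhood. Your Pesin-block approach buys the full stable-manifold machinery at the cost of H\"older-$C^1$; the paper's Pliss-Lemma approach is more elementary and works for $C^1$ maps, at the price of the mild nondegeneracy $\inf\|Df\|>0$ (cf.\ Remark~\ref{rmk:notHolderC1}, which explicitly contrasts the two methods).
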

Results along this line for one-dimensional transformations
usually assume at least the same amount of extra smoothness:
see e.g. Ma\~n\'e~\cite{Man85};
Campanino~\cite{campanino1980} and
Przytycki~\cite{Przyty93}. Other results assume only $C^1$
smoothness but have dimensional restrictions; see
e.g.~\cite{barramorales}.

In all these results the existence of a invariant
  probabilty measure is another standing assumption which we
  mostly avoid in the main statements, but explore some of
  its consequences in what follows.

  As a consequence of these results, points with negative
  asymptotic rates of expansion belong to the basin of a
  periodic attracting orbit, which is its stable manifold.
  In contrast, we note that in~\cite{BoCroSh13}
  the authors show that, for generic
  $C^1$-diffeomorphisms, hyperbolic measures having positive
  and negative Lyapunov exponents do not necessarily admit
  (un)stable invariant manifolds.

\subsection{The discrete time case}
\label{sec:easy-local-diffeom}

For $C^1$ maps on compact manifolds we obtain a necessary
and sufficient condition for a given trajectory to be on the
basin of an attracting periodic orbit
from asymptotic information on the derivative.

Let $f:M\to M$ be a $C^1$ map such that
$\inf_{x\in M}\|Df(x)\|>0$.  The Subadditive Ergodic Theorem
(see e.g. \cite{mane1983,ViOl16}) ensures that the largest
asymptotic growth rate
\begin{align*}
\chi(x)=\lim_{n\to+\infty}\ln\|Df^n(x)\|^{1/n}
\end{align*}
exists for all $x$ on a total probability subset since
$\ln^+\|Df\|=\max\{0,\ln\|Df\|\}$ is $\mu$-integrable for
each $f$-invariant probability measure $\mu$.

In what follows we write
$ A^-_k(x) =
\liminf_{n\to+\infty}n^{-1}\sum_{j=0}^{n-1}\ln\|Df^k(f^{kj}x)\|.
$

We recall that $p\in M$ belongs to a periodic orbit (with
period $\tau$) if there exists $\tau\in\ZZ^+$ so that
$f^\tau p=p$. This periodic orbit
$\cO_f(p)=\{p,fp, \dots, f^{\tau-1}p\}$ is attracting (a
sink, for short) if there exists a neighborhood $V_p$ of $p$
such that $f^\tau\mid_{V_p}: V_p\to V_p$ is a contraction:
there exists $0<\lambda<1$ so that $\dist(f^\tau q,f^\tau
r)<\lambda\dist(q,r), \forall q,r\in V_p$. Equivalently,
$\|Df^{\tau}(p)\|<\lambda$ for some $\lambda\in(0,1)$.

The basin of attraction of a sink $\cO_f(p)$ is the
following subset
$B(\cO_f(p))=\{x\in M: \omega(x)=\cO_f(p)\}$, where the
omega-limit $\omega(x)$ of $x$ is the set of accumulation
points of the positive orbit of $x$:
$y\in\omega(x)\iff\exists n_k\nearrow\infty:
f^{n_k}x\xrightarrow[k\to\infty]{} y$.

\begin{maintheorem}\label{mthm:negativexp-sink}
  Let $f:M\to M$ be a $C^1$ map such that
  $\inf_{x\in M}\|Df(x)\|>0$.  Then $x\in M$ is contained in
  the basin of attraction of a attracting periodic orbit (a
  sink) if, and only if, $A^-_k(x)<0$ for some $k\in\ZZ^+$.
\end{maintheorem}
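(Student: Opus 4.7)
\emph{Necessity.} If $x\in B(\cO_f(p))$ for a sink $\cO_f(p)$ of period $\tau$, then $f^{\tau j}(x)\to p'$ for some fixed point $p'\in\cO_f(p)$ of $f^\tau$, at which $\|Df^\tau(p')\|<1$. Continuity of $\log\|Df^\tau\|$ and Ces\`aro convergence along $f^{\tau j}(x)\to p'$ give $A^-_\tau(x)=\log\|Df^\tau(p')\|<0$, so the condition holds with $k=\tau$.

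\emph{Sufficiency.} Write $g=f^k$; the hypothesis reads $A^-_1(x,g)<-c$ for some $c>0$. Compactness of $M$ with $C^1$ regularity bounds $\log\|Dg\|$ above, and $\inf\|Df\|>0$ bounds it below, so $\log\|Dg\|$ is a bounded continuous function on $M$---the input required by Pliss' Lemma. The argument then proceeds in three steps.

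\emph{(i) Pliss.} Applied to $(-\log\|Dg(g^jx)\|)_j$, Pliss' Lemma produces $c'\in(0,c)$ and an infinite sequence of hyperbolic times $n_i\to\infty$ such that
\[
\prod_{j=m}^{n_i-1}\|Dg(g^jx)\|\le e^{-c'(n_i-m)}\qquad\text{for every }0\le m<n_i.
\]

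\emph{(ii) Uniform contraction disk.} There exist $\delta_0>0$ and $c''\in(0,c')$, depending only on $g$ and $c'$, such that at every hyperbolic time $n_i$, the orbit tube $T_i=\{z:d(g^jz,g^jx)\le\delta_0\ \text{for}\ 0\le j\le n_i\}$ contains an open neighborhood $U_i$ of $x$ on which $g^{n_i}$ is an $e^{-c''n_i}$-Lipschitz contraction, with the uniform derivative bound holding throughout the entire tube, not merely along the orbit of $x$.

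\emph{(iii) Closing and attraction.} Extract a subsequence with $y_i:=g^{n_i}(x)\to y\in\omega(x)$ and pick $i<j$ so that $y_i,y_j\in B_{\delta_0/8}(y)$. The sub-orbit of $y_i$ over $[0,n_j-n_i]$ still satisfies the Pliss estimate at its terminal time (by restriction of the original), and Pliss at $n_j$ with $m=n_i$ gives $\|Dg^{n_j-n_i}(y_i)\|\le e^{-c'(n_j-n_i)}$. Applying (ii) to this sub-orbit shows that $g^{n_j-n_i}$ sends a closed $\delta_0/4$-ball around $y_i$ into itself as a contraction. Banach's fixed-point theorem produces a periodic point $p$ of $g$ of period $n_j-n_i$ in this ball; continuity of $Dg^{n_j-n_i}$ with the uniform bound from (ii) forces $\|Dg^{n_j-n_i}(p)\|<1$, making $p$ a sink of $g$ and hence of $f$. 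Finally, (ii) at $n_i$ shows $y_i$ is attracted to $\cO_f(p)$, hence $x\in B(\cO_f(p))$.

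\textbf{Main obstacle.} Step (ii) is the technical heart and the main departure from the classical $C^{1+\alpha}$ Pesin theory, which uses H\"older regularity of $Dg$ to absorb the non-linear error of the chain rule across iterates. In $C^1$ only uniform continuity of $Dg$ is available, and a naive pointwise comparison of $Dg^{n_i}$ at distinct base points diverges as $n_i\to\infty$. The work-around is to exploit the Pliss inequality \emph{at every intermediate time} $m$, not only at the terminal $n_i$: this keeps the orbit tube of uniform diameter $\delta_0$, within which the pointwise derivative is within a factor $1+\omega(\delta_0)/\inf\|Df\|$ of its value along the orbit of $x$ (with $\omega$ the $C^0$-modulus of $Dg$). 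Choosing $\delta_0$ small enough that the logarithm of this factor is strictly less than $c'-c''$ absorbs the non-linearity into the uniform contraction rate $c''$. Both compactness of $M$ (so that $\omega$ is uniform) and $\inf\|Df\|>0$ (so that $\log\|Dg\|$ is bounded below) are essential for this calibration to close.
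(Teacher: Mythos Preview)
Your overall architecture (Pliss $\to$ contracting neighborhoods $\to$ closing via Banach) matches the paper, and the necessity direction is fine. But there is a genuine gap in step~(ii)--(iii): you apply Pliss in the wrong direction for a sink argument.

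Your Pliss times $n_i$ satisfy $\prod_{j=m}^{n_i-1}\|Dg(g^jx)\|\le e^{-c'(n_i-m)}$ for every $m<n_i$; this controls \emph{tails} ending at $n_i$. Your ``Main obstacle'' paragraph claims this keeps the orbit tube of uniform diameter $\delta_0$, but it does not: to know that the forward images of a $\delta_0/4$-ball around $y_i$ stay within $\delta_0$ of the orbit $g^\ell y_i$ for every $\ell\le n_j-n_i$, you need the \emph{head} products $\prod_{j=0}^{\ell-1}\|Dg(g^j y_i)\|$ bounded for each intermediate $\ell$, not just the tail products. With only tail control, the orbit of $x$ could expand for a long initial stretch and then contract sharply at the end; the tube then bulges in the middle and your $\delta_0/4$-ball around $y_i$ escapes it. Once a point leaves the tube, the derivative comparison $\|Dg(g^j z)\|\approx\|Dg(g^j x)\|$ is lost and the contraction estimate on $g^{n_j-n_i}$ no longer applies to it. So you cannot conclude that $g^{n_j-n_i}$ sends $B(y_i,\delta_0/4)$ into itself.

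The paper handles this by reversing the sequence before Pliss: it sets $a_j=-\ln\|Df(f^{m_i-j}x)\|$ and obtains \emph{reverse hyperbolic times} $h=m_i-n_k$ at which $\prod_{i=0}^{j-1}\|Df(f^{h+i}x)\|\le\lambda^j$ for \emph{every} $j\le m_i-h$ --- head control from $h$ forward. With this in hand, the forward contracting ball (the paper's Lemma~\ref{le:cballs}) follows by a direct induction: since each partial product from $h$ is already $\le\lambda^j$, a $\delta_1$-ball around $f^hx$ has image of diameter $\le\lambda_1^j\delta_1<\delta_1$ at every step, so the derivative comparison stays valid throughout. The nested-contraction/closing argument is then run around accumulation points of $\{f^hx\}_h$, not of $\{g^{n_i}x\}_i$. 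Your outline becomes correct if you make this single change of orientation in the Pliss step.
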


% This is clearly also a necessary condition for the orbit of
% $x$ to belong to the basin of attraction of a sink.

% \begin{corollary}
%   \label{cor:nosink}
%   If a $C^1$ map $f:M\to M$ is such that
%   $\inf_{x\in M}\|Df(x)\|>0$ and there are no hyperbolic
%   attracting periodic orbits, then
%   \begin{align}\label{eq:nosinks}
%     \liminf_{n\to+\infty}\frac1n\sum_{j=0}^{n-1}\ln\|Df^k(f^{kj}x)\|\ge0
%     \quad\text{for all}\quad x\in M, k\in\ZZ^+.
%   \end{align}
% \end{corollary}

Coupling the pointwise result above with the Subadditive
Ergodic Theorem and ergodic decomposition, we deduce:
  \begin{corollary}
    \label{cor:negativexp-sink}
    Let $\mu$ be an invariant probability measure with
    respect to a $C^1$ map $f:M\to M$ such that
    $\inf_{x\in M}\|Df(x)\|>0$ and $\chi(x)<0,
    \mu$-a.e. $x\in M$.  Then $\mu$ decomposes as
    $\tilde\mu+\sum_{i\ge1}\mu_i$, where each $\mu_i$ is a
    Dirac mass equidistributed on a periodic attracting
    orbit of $f$ (a sink), the sum is over at most countably
    many such orbits, and $\tilde\mu$ (which might be the
    null measure) satisfies $A^-_1(x)\ge0,
    \tilde\mu$-a.e. $x\in M$.  In addition, if $\mu$ is
    $f$-ergodic, then $\mu$ is concentrated on the orbit of
    a periodic attractor (sink).
  \end{corollary}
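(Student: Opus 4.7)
The plan is to use Theorem~\ref{mthm:negativexp-sink} to identify points in basins of sinks, apply Poincar\'e recurrence to reduce the measure on each basin to a Dirac contribution on the orbit, and handle the ergodic statement via Kingman's and Birkhoff's theorems.

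Basins of distinct sinks are pairwise disjoint open subsets of the separable manifold $M$, so the sinks of $f$ form an at most countable family $\{\cO_f(p_i)\}_{i\ge1}$. Each basin $B_i:=B(\cO_f(p_i))$ is fully $f$-invariant because $\omega(fx)=\omega(x)$, and by Theorem~\ref{mthm:negativexp-sink} their union $B:=\bigcup_i B_i$ coincides with $\{x\in M: A^-_k(x)<0 \text{ for some } k\in\ZZ^+\}$. Setting $\tilde\mu:=\mu|_{M\setminus B}$, Theorem~\ref{mthm:negativexp-sink} yields $A^-_k(x)\ge 0$ for every $k\in\ZZ^+$ and $\tilde\mu$-a.e.\ $x$, in particular $A^-_1(x)\ge 0$. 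By Poincar\'e recurrence, $\mu$-a.e.\ $x$ lies in $\omega(x)$; if such an $x$ also lies in $B_i$, then $\omega(x)=\cO_f(p_i)$ forces $x\in\cO_f(p_i)$. Hence $\mu|_{B_i}$ is $f$-invariant and supported on the finite orbit $\cO_f(p_i)$, so it coincides with a scalar multiple $\mu_i$ of the equidistributed Dirac measure on $\cO_f(p_i)$. This produces the decomposition $\mu=\tilde\mu+\sum_{i\ge1}\mu_i$.

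For the ergodic statement, $\chi$ is $\mu$-a.e.\ equal to a negative constant $\chi_\mu$. Kingman's Subadditive Ergodic Theorem gives $\chi_\mu=\inf_{k}\tfrac{1}{k}\int\log\|Df^k\|\,d\mu$, so for some $k\in\ZZ^+$ the integral $\int\log\|Df^k\|\,d\mu$ is negative. Applying Birkhoff's theorem to the $\mu$-invariant transformation $f^k$ with integrand $g:=\log\|Df^k\|$ (whose positive part is bounded on compact $M$, hence $\mu$-integrable), the limits $A_k(x):=\lim_m\tfrac{1}{m}\sum_{j=0}^{m-1}g(f^{kj}x)$ exist $\mu$-a.e.\ and satisfy $\int A_k\,d\mu=\int g\,d\mu<0$; therefore $\mu\{A_k<0\}>0$, and on this set $A^-_k\le A_k<0$ places it in $B$. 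Since each $B_i$ is $f$-invariant and $\mu$ is $f$-ergodic, $\mu(B_i)\in\{0,1\}$; as $\mu(B)=\sum_i\mu(B_i)>0$, some $\mu(B_{i_0})=1$, and by the earlier step $\mu$ coincides with the equidistributed Dirac measure on the single sink $\cO_f(p_{i_0})$.

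The main obstacle is that $\mu$, even when $f$-ergodic, need not be $f^k$-ergodic, so the Birkhoff limits $A_k$ are not $\mu$-a.e.\ constant; one bypasses this by passing through the integral bound $\int A_k\,d\mu<0$ furnished by Kingman, which still forces a positive-measure set on which $A_k<0$ and thereby feeds back into Theorem~\ref{mthm:negativexp-sink}.
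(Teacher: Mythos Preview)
Your proof is correct and takes a cleaner route than the paper's. The paper invokes the Ergodic Decomposition Theorem: it picks $N$ with $\int\ln\|Df^N\|\,d\mu<0$, passes to the $f^N$-ergodic components $\mu_x$, and on the set $U=\{x:\mu_x(\ln\|Df^N\|)<0\}$ uses ergodicity of $\mu_x$ to get $A^-_N(x)<0$ and hence, via Theorem~\ref{mthm:negativexp-sink}, that $\mu_x$ is a Dirac mass on a sink; $\tilde\mu$ is then the restriction of $\mu$ to $M\setminus U$. You instead define $\tilde\mu$ directly as $\mu$ restricted to the complement of the union $B$ of all sink basins---which by Theorem~\ref{mthm:negativexp-sink} is precisely where $A^-_k\ge0$ for every $k$---and replace ergodic decomposition by Poincar\'e recurrence to show that $\mu|_{B_i}$ is supported on the periodic orbit itself. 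This is more elementary, and it immediately yields the claimed property of $\tilde\mu$ (indeed the stronger $A^-_k\ge0$ for all $k$), whereas the paper's argument, as written, does not explicitly verify that its $\tilde\mu$ satisfies $A^-_1\ge0$. Your treatment of the ergodic case, including the explicit handling of the fact that $\mu$ need not be $f^k$-ergodic, is also more careful than the paper's.
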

  
  \begin{remark}\label{rmk:notHolderC1}
    \begin{enumerate}
    \item \emph{We do not need H\"older continuity of the
        derivative} in the arguments proving
      Theorem~\ref{mthm:negativexp-sink} and
      Corollary~\ref{cor:negativexp-sink}.
    \item We do not need that $f$ be a diffeomorphism or
      local diffeomorphism; compare with Corollary S.5.2 of
      \cite[Supplement]{KH95} where the usual H\"older
      condition on the derivative of a diffeomorphism in
      \emph{Pesin's Theory}, or non-uniform hyperbolic
      theory, is used to construct \emph{hyperbolic blocks}.
    \item We need only to assume that $Df(x)$ is not the
      null map for all $x\in M$, and this weak condition is
      compatible with a wide class of critical points of a
      smooth map $f\in C^1(M,M)$.
    \item The previous assumption ensures that
      $A_k^-(x)>k\ln\inf_{x\in M}\|Df(x)\|>-\infty$ for all
      $x\in M$ and all $k\ge1$, and so also
      $\chi(x)\ge\ln\inf_{x\in M}\|Df(x)\|>-\infty$ on a
      total probability subset of points $x$.
    \end{enumerate}
  \end{remark}

  \subsubsection{The diffeomorphism case}
\label{sec:diffeomorphism-case}

If $f$ is a $C^1$ diffeomorphism, then exchanging $f$ with
$f^{-1}$ we have that
$ \tilde\chi(x) = \lim_{n\to+\infty}\ln\|Df^{-n}(x)\|^{1/n}
= \lim_{n\to+\infty}\ln\|Df^{n}(x)^{-1}\|^{1/n}$ exists for
$x$ on a total probability subset of $M$ and gives the least
asymptotic growth rate. We also write
$\tilde A^-_k(x)= \liminf_{n\to\infty}n^{-1}
\sum_{j=0}^{n-1}\ln\|\big(Df^k(f^{kj}x)\big)^{-1}\|$. We say
that a periodic orbit of $f$ is \emph{repelling} if it is an
attracting periodic orbit for $f^{-1}$.

\begin{maintheorem}\label{mthm:negexpdiffeo}
  Let $f:M\to M$ be a $C^1$ diffeomorphism. Then $x\in M$
  belongs to a repelling periodic orbit (a source) if, and
  only if, $\tilde A^-_k(x)<0$ for some $k\in\ZZ^+$.
  % Consequently, every $f$-invariant probability measure
  % $\mu$  such that
  % $\tilde\chi(x)<0, \mu$-a.e. $x$ is a (at most enumerable)
  % linear convex combination of Dirac masses on the orbit of
  % hyperbolic periodic repellers (sources). If in addition
  % $\mu$ is $f$-ergodic, then $\mu$ is supported on the orbit
  % of a source.
\end{maintheorem}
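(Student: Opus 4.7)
The easy implication (source $\Rightarrow \tilde A^-_k(x)<0$) is immediate from the source condition and periodicity: if $\cO_f(x)$ has period $\tau$ and is repelling, then $(Df^\tau(x))^{-1}$ has spectral radius strictly less than one, so choosing $k$ a sufficiently large multiple of $\tau$ and using $f^{kj}x=x$ for all $j$ yields $\tilde A^-_k(x)=\ln\|(Df^k(x))^{-1}\|<0$.

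For the converse, my strategy is to mirror the proof of Theorem~\ref{mthm:negativexp-sink}, systematically replacing ordinary hyperbolic times with \emph{reverse} hyperbolic times. Since $f$ is a $C^1$ diffeomorphism on the compact $M$, the function $\ln\|(Df^k)^{-1}\|$ is bounded, so a reverse-time version of Pliss' lemma (the dual of the tool announced in the abstract) applied to the hypothesis $\tilde A^-_k(x)<0$ yields a uniform $\epsilon>0$ and an increasing sequence $N_l\nearrow\infty$ of reverse hyperbolic times satisfying
\[
\prod_{j=m}^{N_l-1}\|(Df^k(f^{kj}x))^{-1}\|\le e^{-\epsilon(N_l-m)}, \qquad 0\le m\le N_l.
\]
The standard hyperbolic-neighborhood construction (now applied to $f^{-1}$, exploiting continuity of $Df$) furnishes a $\delta>0$ independent of $l$ such that $f^{-kN_l}$ is a well-defined $e^{-\epsilon N_l/2}$-Lipschitz contraction of $B(f^{kN_l}x,\delta)$ into $B(x,\delta e^{-\epsilon N_l/2})$. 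By compactness I pass to a subsequence with $y_l:=f^{kN_l}x\to y\in M$; then for $l<l'$ large, the map $g_{l,l'}:=f^{-k(N_{l'}-N_l)}$ becomes a strong contraction of $B(y,\delta/2)$ into itself, and Banach's fixed point theorem produces a unique fixed point $p_{l,l'}\in B(y,\delta/2)$---a periodic source of $f$ of period $\tau_{l,l'}=k(N_{l'}-N_l)$---whose preimage $q_{l,l'}:=f^{-kN_l}(p_{l,l'})$ lies on the same source orbit with $|q_{l,l'}-x|\le\delta e^{-\epsilon N_l/2}$.

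The main obstacle is the concluding step: upgrading the statement ``$x$ is accumulated by periodic sources'' to ``$x$ itself belongs to a periodic source orbit''. My plan here is to exploit simultaneously the return estimate $|f^{\tau_{l,l'}}(x)-x|=|f^{-kN_l}(y_{l'})-f^{-kN_l}(y_l)|\le\delta e^{-\epsilon N_l/2}|y_{l'}-y_l|\to 0$ and the local expansion of $f^{\tau_{l,l'}}$ at rate $\ge e^{\epsilon\tau_{l,l'}/2}$ near $q_{l,l'}$ (which is inherited, via continuity of $Df$ and the closeness $|q_{l,l'}-x|\le\delta e^{-\epsilon N_l/2}$, from the reverse hyperbolic time structure along the forward orbit of $x$). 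If $x$ did not lie on the orbit of $q_{l,l'}$ for any sufficiently large $l,l'$, iterating this local expansion would force $|f^{\tau_{l,l'}}(x)-q_{l,l'}|$ to exceed what the return estimate and triangle inequality allow once $\tau_{l,l'}$ is taken large enough. Hence $x$ must coincide with a point on a periodic source orbit, completing the proof.
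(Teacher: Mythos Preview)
Your construction of hyperbolic times via Pliss' Lemma, the backward contracting balls around $y_l=f^{kN_l}x$, and the production of a periodic source $p_{l,l'}$ by Banach's fixed point theorem all parallel the paper's proof (the paper calls your $N_l$'s simply ``hyperbolic times'' and applies Pliss directly to $a_j=\ln\|Df(f^jx)^{-1}\|$ without reversing time, but the content is identical).

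The gap is in your concluding step. Your proposed contradiction does not close: from the expansion estimate $|f^{\tau_{l,l'}}(x)-q_{l,l'}|\ge e^{\epsilon\tau_{l,l'}/2}|x-q_{l,l'}|$ combined with the return/triangle bound $|f^{\tau_{l,l'}}(x)-q_{l,l'}|\le |f^{\tau_{l,l'}}(x)-x|+|x-q_{l,l'}|$ you obtain only
\[
|x-q_{l,l'}|\le \frac{|f^{\tau_{l,l'}}(x)-x|}{e^{\epsilon\tau_{l,l'}/2}-1},
\]
which tends to zero but is never violated, because you have no positive lower bound on $|x-q_{l,l'}|$: the periodic points $q_{l,l'}$ vary with $(l,l')$ and nothing prevents them from lying on distinct orbits accumulating $x$. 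Iterating $f^{\tau_{l,l'}}$ does not help either, since $f^{2\tau_{l,l'}}(x)=f^{k(2N_{l'}-N_l)}(x)$ and $2N_{l'}-N_l$ is not in general a hyperbolic time, so you lose both the return estimate and control of the expansion domain.

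The paper's resolution is simpler and avoids this. Fix \emph{one} periodic source $p$ (say $p=p_{l_0,l_0'}$). Since $p\in B(y,\delta/2)$ and $y_l\to y$, one has $p\in B(y_l,\delta)$ for every sufficiently large $l$; the backward contraction at each hyperbolic time $N_l$ then gives $\dist\big(x,f^{-kN_l}p\big)\le \delta\, e^{-\epsilon N_l/2}$. The points $f^{-kN_l}p$ all belong to the \emph{finite} orbit $\cO_f(p)$ and converge to $x$ as $l\to\infty$; hence $x\in\cO_f(p)$. No expansion-versus-return argument is needed.
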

We easily deduce the following ergodic consequence
from the above pointwise result.
  \begin{corollary}
    \label{cor:decomposes}
    Let $\mu$ be an invariant probability measure with
    respect to a $C^1$ diffeomorphism $f:M\to M$. Then it
    admits a decomposition\footnote{Some of the summands in
      the decomposition might be null.}
    $\mu=\tilde\mu+\sum_{i\ge1}\nu_i+\sum_{j\ge1}\rho_i$,
    where each $\nu_i$ (respectively, $\rho_i$) is a Dirac
    mass equidistributed on a periodic attracting
    (resp. repelling) orbit of $f$, both sums are over at
    most countably many such orbits, and $\tilde\mu$
    satisfies $\chi(x)\ge 0\ge-\tilde\chi(x)$ for
    $\tilde\mu$-a.e. $x\in M$.

    In particular, if $\mu$ is non-atomic, then
    $\mu=\tilde\mu$ and so either $\mu$ has some zero
    exponent, or $\mu$ is a hyperbolic measure with
    exponents of different signs.
  \end{corollary}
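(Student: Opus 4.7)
The plan is to combine an ergodic decomposition of $\mu$ with Theorems \ref{mthm:negativexp-sink} and \ref{mthm:negexpdiffeo} applied componentwise. Writing $\mu = \int \mu_\alpha\, d\tau(\alpha)$ for the ergodic decomposition, Oseledets' theorem supplies constant Lyapunov exponents $\lambda_1(\alpha) \ge \cdots \ge \lambda_d(\alpha)$ on each ergodic component, so $\chi(x) = \lambda_1(\alpha)$ and $-\tilde\chi(x) = \lambda_d(\alpha)$ for $\mu_\alpha$-a.e.\ $x$; the subadditive ergodic theorem further gives $\lambda_1(\alpha) = \inf_k k^{-1} A_k^-(x)$ and $\lambda_d(\alpha) = -\inf_k k^{-1} \tilde A_k^-(x)$ on the same set. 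I would then partition the ergodic components into three classes: (i) $\lambda_1(\alpha) < 0$, (ii) $\lambda_d(\alpha) > 0$, and (iii) $\lambda_1(\alpha) \ge 0 \ge \lambda_d(\alpha)$.

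In class (i) the identities above produce some $k$ with $A_k^-(x) < 0$ on a set of full $\mu_\alpha$-measure, so Theorem \ref{mthm:negativexp-sink} places $\mu_\alpha$-a.e.\ $x$ in the basin of some sink $\cO$; Poincar\'e recurrence then forces $x \in \omega(x) = \cO$ a.e., and ergodicity together with $f$-invariance identifies $\mu_\alpha$ with the equidistributed Dirac mass on $\cO$. The argument for class (ii) is parallel but cleaner: Theorem \ref{mthm:negexpdiffeo} already concludes that a.e.\ $x$ lies on a periodic source, so ergodicity selects a single such orbit on which $\mu_\alpha$ equidistributes, with no appeal to recurrence needed.

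Because distinct sinks have pairwise disjoint open basins and $M$ is second countable, the collection of sinks is at most countable; the same reasoning applied to $f^{-1}$ yields at most countably many sources. Therefore the integrals of $\mu_\alpha$ over classes (i) and (ii) collapse to countable sums $\sum_i \nu_i$ and $\sum_j \rho_j$, where each $\nu_i$ and $\rho_j$ is a scalar multiple of the equidistributed Dirac mass on the corresponding periodic orbit. Setting $\tilde\mu$ equal to the integral of $\mu_\alpha$ over class (iii) gives the desired decomposition $\mu = \tilde\mu + \sum_i \nu_i + \sum_j \rho_j$, with the bound $\chi(x) \ge 0 \ge -\tilde\chi(x)$ for $\tilde\mu$-a.e.\ $x$ holding by construction. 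For the ``in particular'' clause, any non-atomic $\mu$ assigns zero mass to every finite periodic orbit, so each $\nu_i$ and $\rho_j$ vanishes and $\mu = \tilde\mu$.

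The main obstacle I anticipate is the asymmetry between the two theorems: Theorem \ref{mthm:negativexp-sink} only delivers basin membership, which must be promoted to orbit membership via Poincar\'e recurrence in order to conclude that the relevant ergodic component is concentrated on a sink, whereas Theorem \ref{mthm:negexpdiffeo} already provides the point-level conclusion. A secondary but easily dispatched point is that the families of ergodic components concentrated on sinks and on sources are genuinely countable, which follows from the disjoint open basins together with second countability of $M$.
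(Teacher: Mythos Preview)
Your proposal is correct and follows essentially the same route as the paper: partition according to the sign of the extremal Lyapunov exponents via ergodic decomposition, then invoke Theorems~\ref{mthm:negativexp-sink} and~\ref{mthm:negexpdiffeo} on the respective pieces. The paper organizes this by first cutting $M$ into the $f$-invariant sets $E=\{\chi<0\}$ and $\tilde E=\{\tilde\chi<0\}$ and then decomposing, while you decompose first and classify the components afterward, but the substance is identical; your explicit use of Poincar\'e recurrence to upgrade ``basin of a sink'' to ``on the sink'' is exactly the step the paper compresses into ``since $\mu_x$ is ergodic and $f$ invertible''.

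One small caution: the pointwise identity $\lambda_1(\alpha)=\inf_k k^{-1}A_k^-(x)$ need not hold as stated when $\mu_\alpha$ fails to be $f^k$-ergodic, since $A_k^-(x)$ is then the average over the $f^k$-ergodic component through $x$ rather than over $\mu_\alpha$. What you actually need, and what does follow, is weaker: from $\lambda_1(\alpha)<0$ one gets $\mu_\alpha(\ln\|Df^k\|)<0$ for some $k$, hence by Birkhoff a positive-$\mu_\alpha$-measure set with $A_k^-<0$, hence by Theorem~\ref{mthm:negativexp-sink} a positive-measure set in the basin of some sink; this set is $f$-invariant, so ergodicity gives full measure. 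The paper's justification at this point is equally compressed.
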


\subsubsection{Robustness of negative Lyapunov exponents}
\label{sec:robustn-negative-lya}
  
In contrast to the results above, it is well-known that
positive Lyapunov exponents in all directions on a total
probability set for a $C^1$ local diffeomorphism of $M$
imply that $f$ is a uniformly expanding map. More precisely,
see \cite{alves-araujo-saussol,cao2003}, if
$\tilde\chi(x)=\lim_{n\to+\infty}\ln\|Df^n(x)^{-1}\|^{1/n}<0$
for $\mu-$ a.e. $x\in M$ with respect to every $f$-invariant
probability measure $\mu$, then we can find constants
$C,\sigma>0$ so that $\|Df^n(x)^{-1}\|\le C e^{-\sigma n}$
for all $x\in M, n\ge1$. This is a robust situation: the
assumptions automatically hold for a $C^1$-neighborhood of
such local diffeomorphisms; see e.g. \cite{ViOl16}.

In the same setting exchanging positive with negative
exponents in all directions we obtain the following.

\begin{maintheorem}
  \label{mthm:allnegexpfinitesinks}
  If a $C^1$ map $f:M\to M$ is such that
  $\inf_{x\in M}\|Df(x)\|>0$ and for all $f$-invariant
  probability measures $\mu$ we have $\chi(x)<0,
  \mu$-a.e. $x\in\ M$,
  then there exists a unique periodic attracting orbit
  $\cO_f(p)$ whose basin is $M$. 
\end{maintheorem}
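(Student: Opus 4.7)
The plan is to reduce Theorem~\ref{mthm:allnegexpfinitesinks} to Corollary~\ref{cor:negativexp-sink}, whose ergodic clause identifies each $f$-ergodic invariant probability with the equidistribution on a sink. My four steps will be: (i) show that every $f$-invariant probability measure on $M$ is a countable convex combination of such sink measures; (ii) apply this to a weak-$\ast$ accumulation point of empirical measures along the orbit of an arbitrary $x\in M$ to produce a sink inside $\omega(x)$; (iii) deduce that $x$ lies in the basin of that sink; and (iv) extract uniqueness from the connectedness of $M$.

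For step (i), the hypothesis of Theorem~\ref{mthm:allnegexpfinitesinks} is precisely that of Corollary~\ref{cor:negativexp-sink}, so for every $f$-ergodic invariant probability $\mu$ the ergodic clause of the corollary identifies $\mu$ with the equidistribution on a periodic sink; integrating against the ergodic decomposition then writes any $f$-invariant probability measure as a countable sum of such sink measures. For step (ii) I fix $x\in M$ and consider the empirical measures $\mu_n=\frac{1}{n}\sum_{j=0}^{n-1}\delta_{f^jx}$; compactness of the space of Borel probabilities on the compact manifold $M$ yields a subsequence $\mu_{n_k}\to\mu$ weakly, and the standard telescoping estimate $\|f_\ast\mu_n-\mu_n\|\le 2/n$ shows the limit $\mu$ is $f$-invariant. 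By step (i), $\supp\mu$ is contained in the union of sink orbits, and since $\mu$ is a probability at least one sink $\cO_f(p)$ lies inside $\supp\mu$. The Portmanteau theorem forces $\supp\mu\subseteq\omega(x)$: for any open neighborhood $U$ of a point of $\supp\mu$ the bound $\liminf_k\mu_{n_k}(U)\ge\mu(U)>0$ produces infinitely many $j$ with $f^jx\in U$. Hence $p\in\omega(x)$; since the basin $B(\cO_f(p))$ is an open neighborhood of $p$, some iterate $f^nx$ enters it, and by $f$-invariance of basins $x\in B(\cO_f(p))$, which is step (iii).

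For step (iv), distinct sinks have disjoint basins, each open and $f$-invariant, so the family $\{B(\cO_f(p_i))\}$ of nonempty sink basins is a partition of $M$ into nonempty open sets; connectedness of $M$ forces this partition to reduce to a single element, yielding a unique sink whose basin is all of $M$. The only delicate link in the argument is already packaged inside Corollary~\ref{cor:negativexp-sink}: converting the integral condition $\int\chi\,d\mu<0$ (valid for ergodic $\mu$ under our hypothesis) into the pointwise condition $A^-_k(y)<0$ demanded by Theorem~\ref{mthm:negativexp-sink} requires selecting $k$ with $\int\ln\|Df^k\|\,d\mu<0$ (available from the subadditivity of $n\mapsto\ln\|Df^n\|$ and Fekete's lemma) and then applying Birkhoff's theorem to the $f^k$-ergodic components of $\mu$. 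Because this step is already absorbed in the statement of the corollary, at the level of Theorem~\ref{mthm:allnegexpfinitesinks} itself only empirical limits, Portmanteau, and a connectedness argument remain, and none of these presents a real obstacle.
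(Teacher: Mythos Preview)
Your proof is correct, and it takes a genuinely different route from the paper's.

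The paper first establishes the pointwise claim
\[
\forall x\in M\ \exists k\in\ZZ^+:\ A^-_k(x)<0,
\]
arguing by contradiction: if some $x$ failed this for every $k$, any weak-$\ast$ limit $\mu$ of its empirical measures would satisfy $\mu(\ln\|Df^k\|)\ge 0$ for all $k$, contradicting $\inf_k k^{-1}\mu(\ln\|Df^k\|)=\mu(\chi)<0$. Theorem~\ref{mthm:negativexp-sink} is then applied to each $x$ directly. You instead invoke Corollary~\ref{cor:negativexp-sink} to say every ergodic measure is a sink equidistribution, push this through the ergodic decomposition, and then use Portmanteau to locate a sink inside $\omega(x)$ for each $x$; openness of basins finishes step~(iii). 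Both arguments pass through the same empirical-measure construction, but the paper extracts the pointwise Birkhoff-type inequality $A_k^-(x)<0$ (a statement of independent interest, reused e.g.\ in Corollary~\ref{cor:compactnegexpperiodic}), whereas your argument bypasses it entirely and is more measure-theoretic in flavor.

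Your uniqueness step~(iv) is also cleaner than the paper's: once $M$ is a disjoint union of nonempty open basins, connectedness immediately forces a single basin. The paper instead argues via a point in $\overline{B_n}\cap\overline{B_m}$ and reinvokes the pointwise claim to reach a contradiction; your version avoids this detour.
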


See next Subsection~\ref{sec:conjectures} for comments and
corollaries of this.
  
\subsection{The case of (singular) vector fields}
\label{sec:case-singular-vector}

Let $\fX^1(M)$ be the space of $C^1$ vector fields on $M$
which are inwardly transverse to the boundary endowed with
the $C^1$ topology and $\phi_t$ be flow generated by
$G\in\fX^1(M)$.  We denote by $D\phi_t$ \emph{the derivative
  of $\phi_t$ with respect to the ambient variable $q$ and
  set $D_q \phi_t=D\phi_t(q)$}.  An analogous Subadditive
Ergodic Theorem also holds:
$\chi_G(x)=\lim_{T\to+\infty}\ln\|(D\phi_T(x)\|^{1/T}$ exists
on a total probability subset.

To state analogous results for vector fields we need some
preliminary notions about critical elements of the flow
induced by a vector field and, since the vector field
direction always has zero Lyapunov exponent for every
invariant probability measure, we need to deal with the
derivative cocycle of the flow $\phi_t$ generated by the
vector field $G$ restricted to the normal direction to the
flow: these notions can be defined for a flow on any finite
dimensional Riemannian manifold.

\subsubsection{Some preliminary notions}
\label{sec:some-prelim-notions}

Given $G \in {\fX}^1(M)$, where $M$ is a compact finite
dimensional Riemannian manifold with dimension $d\ge2$, we
denote by \emph{$DG$ the derivative of the vector field $G$}
with respect to the ambient variable $q$, and when
convenient we write $D_q G$ for the derivative $DG$ at $q$,
also denoted by $DG_q$, where $DG_q v=\nabla_v G(y)$ where
$\nabla$ is the unique Levi-Civita connection compatible
with the Riemannian metric on $M$.  Given $q \in M$ an orbit
segment $\{\phi_tq; a \leq t \leq b\}$ is denoted by
$\phi_{[a,b]}q$.

\subsubsection*{Critical elements}

An \emph{equilibrium} or \emph{singularity} for $G$ is a
point $\sigma\in M$ such that $\phi_y(\sigma)=\sigma$ for
all $t\in\RR$, i.e.  a fixed point of all the flow maps,
which corresponds to a zero of the associated vector field
$G$: $G(\sigma)=\vec0$. We denote by
$\sing(G)=\{x\in M:G(x)=\vec0\}$ the set of singularities of
$G$.  Every point $p\in M$ which is not a singularity, that
is $p$ satisfies $G(p)\neq0$, is a \emph{regular} point for
$G$.

An \emph{orbit} of $G$ is a set
$\cO(q)=\cO_G(q)=\{\phi_tq: t\in\RR\}$ for some $q\in
M$. Hence $\sigma\in M$ is a singularity of $G$ if, and only
if, $\cO_G(\sigma)=\{\sigma\}$.  A \emph{periodic orbit} of
$G$ is an orbit $\cO=\cO_G(p)$ such that $\phi_Tp=p$ for some
minimal $T>0$ (equivalently $\cO_G(p)$ is compact and
$\cO_G(p)\neq\{p\}$). We denote by $\per(G)$ the set of all
periodic orbits of $G$.

A \emph{critical element} of a given
vector field $G$ is either a singularity or a periodic
orbit.  The set $\crit(G)=\sing(G)\cup\per(G)$ is the set of
\emph{critical elements} of $G$.

\subsubsection*{Limit sets. Attractors}

If $q\in M$, we define \emph{omega-limit} set $\omega_G(q)$
as the set of accumulation points of the positive orbit
$\{\phi_tq:t\geq 0\}$ of $q$.  We also define the
alpha-limit set $\alpha_G(q)=\omega_{-G}$, where $-G$ is
the time reversed vector field $G$, corresponding to the set
of accumulation points of the negative orbit of $q$.

A subset $\Lambda$ of $M$ is \emph{invariant} for $G$ (or
$G$-invariant) if $\phi_t\Lambda=\Lambda, \forall
t\in\RR$. We note that $\omega_G(q), \alpha_G(q), \sing(G)$
and their complements in $M$ are $G$-invariant.

For every compact invariant set $\Lambda$ of $X$ we define
the \emph{stable set} of $\Lambda$
$$
W^s_G(\Lambda)=\{q\in M:\omega_G(q)\subset \Lambda\},
$$
and also its \emph{unstable set}
$$
W^u_G(\Lambda)=\{q\in M:\alpha_G(q)\subset \Lambda\}.
$$
A compact invariant subset $\Lambda$ of $G$ is
\emph{attracting} if $\Lambda_G(U)=\cap_{t\geq 0}\phi_t(U)$
equals $\Lambda$ for some neighborhood $U$ of $\Lambda$
satisfying $\overline{\phi_t(U)}\subset U, \forall t>0$. In
this case the neighborhood $U$ is called an \emph{isolating
  neighborhood} of $\Lambda$. Analogously, $\Lambda$ is
\emph{repelling} if it is attracting for $-G$. We say
$\Lambda$ is a proper subset if
$\emptyset\neq\Lambda\neq M$.

\subsubsection*{Hyperbolic critical elements}

A (hyperbolic) \emph{sink} of $G$ is a singularity which is
also an attracting set, it is a trivial attracting set of
$G$.  A \emph{source} of $G$ is a trivial repelling subset
of $G$, i.e. a singularity which is attracting for $-G$.

A \emph{singularity $\sigma$ is hyperbolic} if the
eigenvalues of $DG(\sigma)$, the derivative of the vector
field at $\sigma$, have real part different from zero.  In
particular, sinks and sources are hyperbolic singularities,
since all the eigenvalues of the former have negative real
part and those of the latter have positive real part.

A \emph{periodic orbit $\cO_G(p)$ of $G$ is hyperbolic} if
the eigenvalues of $D\phi_T(p):T_pM\to T_p M$ (the
derivative of the diffeomorphism $\phi_T$ at $p$ with $T>0$
the period of $p$) are all different from $1$.

When a critical element is hyperbolic, then its stable and
unstable sets have the structure of an immersed manifold (a
consequence of the Stable Manifold Theorem, see
e.g. \cite{PM82}), and are known as \emph{stable}
and \emph{unstable manifolds}.

In the particular case of attracting critical elements, the
corresponding stable set (manifold) is also known as its
(topological) \emph{basin}.

\subsubsection*{Linear Poincar\'e Flow.}

If $x$ is a regular point of a $C^1$ vector field
$G$ (i.e. $G(x)\neq \vec0$), denote by
$
  N_x=\{v\in T_xM: \langle v , G(x)\rangle=0\}
$
the orthogonal complement of $G(x)$ in $T_xM$.  Denote by
$O_x:T_xM\to N_x$ the orthogonal projection of $T_x M$ onto
$N_x$.  For every $t\in\RR$ define, see Figure~\ref{fig:LPF}
\begin{align*}
P_x^t:N_x\to N_{\phi_t x}
\quad\text{by}\quad
P_x^t=O_{\phi_t x}\circ D\phi_t(x).
\end{align*}
\begin{figure}[h]
  \centering\label{fig:LPF}
  \includegraphics[width=12cm]{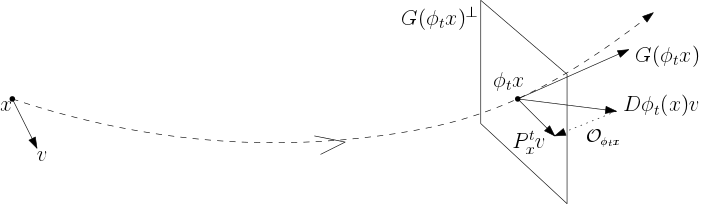}
  \caption{Sketch of the Linear Poincar\'e flow $P^t_x$ of a
  vector $v\in T_xM$ with $x\in M\setminus\sing(G)$.}
\end{figure}
It is easy to see that $P=\{P_x^t:t\in\RR, G(x)\neq 0\}$
satisfies the cocycle relation
$ P^{s+t}_x=P^t_{\phi_s x}\circ P^s_x $ for every
$t,s\in\RR$.  The family $P=P_G$ is called the {\em Linear
  Poincar\'e Flow} of $G$.

\begin{remark}
  \label{rmk:semiflow}
  The Linear Poincar\'e Flow does not immediately extends to
  the smooth semiflow setting and this is an important tool
  in our proofs; see Conjecture~\ref{conj:continuous}.
\end{remark}

\subsubsection{Negative (positive)  exponents and sinks}
\label{sec:negative-exponents-s}

First we consider setting similar to
Theorems~\ref{mthm:negativexp-sink}
and~\ref{mthm:negexpdiffeo}.  In what follows we write
$\chi_G^-(x) = \liminf_{T\to+\infty}\ln\|D\phi_T(x)\|^{1/T}$
and
$\tilde\chi_G^-(x) = \liminf_{T\to+\infty}\ln\|D\phi_T(x)^{-1}\|^{1/T}$.

\begin{maintheorem}
  \label{mthm:negexpflow-sink}
  Given $G\in\fX^1(M)$ suppose that $x\in M$ satisfies
  $\chi_G^-(x)<0$. Then there exists a hyperbolic sink
  $\sigma\in\sing(G)$ so that
  $\phi_tx\to\sigma$ as $t\to\infty$.
  Otherwise, suppose that $\tilde\chi_G^-(x)<0$. Then $x$ is a 
  repelling equilibrium (a source).
\end{maintheorem}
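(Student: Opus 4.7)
The plan is to handle the two assertions separately and, within the sink statement, to dispose of the singular case by linearization before attacking the regular case via a flow version of Pliss' lemma.

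For the source part, I first observe that at every regular point $y$ the identity $D\phi_t(y)\,G(y)=G(\phi_t y)$ implies
\[
\|D\phi_t(y)^{-1}\|\ \ge\ \frac{\|G(y)\|}{\|G(\phi_t y)\|}\ \ge\ \frac{\|G(y)\|}{\sup_M\|G\|},
\]
so $\tilde\chi_G^-(y)\ge 0$. Consequently the hypothesis $\tilde\chi_G^-(x)<0$ forces $G(x)=\vec0$, and then $D\phi_t(x)=\exp\bigl(tDG(x)\bigr)$ reduces the condition to saying every eigenvalue of $DG(x)$ has strictly positive real part. So $x$ is a hyperbolic source, settling the second claim.

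For the sink part, the singular case $G(x)=\vec0$ is treated analogously: $\chi_G^-(x)<0$ forces every eigenvalue of $DG(x)$ to have strictly negative real part, so $x$ itself is a hyperbolic sink and I take $\sigma=x$. Assume henceforth $G(x)\ne\vec0$. Then $G(\phi_t x)=D\phi_t(x)G(x)$ together with $\chi_G^-(x)<0$ yields a sequence $T_n\to\infty$ with $\|G(\phi_{T_n}x)\|\le\|D\phi_{T_n}(x)\|\cdot\|G(x)\|\to 0$, so $\omega_G(x)\cap\sing(G)\ne\emptyset$ and, after passing to a subsequence, $\phi_{T_n}(x)\to\sigma$ for some $\sigma\in\sing(G)$. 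Next I invoke the Pliss-type lemma for flows advertised in the abstract to refine these $T_n$ into a sequence of hyperbolic times at which a uniform exponential contraction estimate
\[
\|D\phi_{T_n-s}(\phi_s x)\|\ \le\ e^{-c'(T_n-s)}\quad\text{for all }\ 0\le s\le T_n
\]
holds with a positive rate $c'$. Coupling this uniform contraction with the convergence $\phi_{T_n}(x)\to\sigma$, the continuity of $DG$, and the first-order comparison of $D\phi_t$ with $\exp\bigl(t\,DG(\sigma)\bigr)$ on a neighborhood of $\sigma$, I will conclude that every eigenvalue of $DG(\sigma)$ has strictly negative real part, so $\sigma$ is a hyperbolic sink.

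Once $\sigma$ is a hyperbolic sink, its basin $W^s_G(\sigma)$ is an open neighborhood of $\sigma$ by the Stable Manifold Theorem. Since $\phi_{T_n}(x)\to\sigma$, the tail of the forward orbit of $x$ enters $W^s_G(\sigma)$, and hence $\phi_t(x)\to\sigma$ as $t\to\infty$. The main obstacle is the hyperbolic-times step just described: extracting hyperbolic times from a bare $\liminf$ hypothesis in continuous time, and then transferring the resulting uniform contraction through the limit singularity $\sigma$ to pin down the spectrum of $DG(\sigma)$, all in the $C^1$ regime and without any H\"older control on $DG$.
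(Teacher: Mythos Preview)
Your approach is correct but diverges from the paper's in instructive ways. For the source statement, your observation that the flow direction forces $\tilde\chi_G^-(y)\ge0$ at every regular $y$---so the hypothesis immediately gives $G(x)=\vec0$ and linearization finishes---is markedly simpler than the paper, which runs the full Pliss/backward-contracting-ball/nested-contraction machinery of Section~\ref{sec:weak-section-expans} and only concludes $G(p)=\vec0$ a posteriori. For the sink statement you also take a different route: the paper extracts \emph{reverse} hyperbolic times $\tau_n$ (forward contraction from $\tau_n$ up to $T_n$), accumulates $x(\tau_n)$, and reruns the discrete nested-contraction argument of Subsection~\ref{sec:nested-contract-argu} for the time-$1$ map to \emph{construct} an attracting fixed point $p$ of some $\phi_m$, which is then forced to be singular; you instead use forward hyperbolic times $T_n$, note via the flow direction that $\phi_{T_n}x$ must accumulate on $\sing(G)$, and read off that the limit $\sigma$ is a sink directly from a Gronwall comparison of $D\phi_t(\phi_{T_n-t}x)$ with $e^{tDG(\sigma)}$---closer in spirit to Lemma~\ref{le:Gronwall}, and bypassing nested contraction entirely. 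The obstacle you flag is precisely what Subsection~\ref{sec:linear-poincare-flow-1} addresses: $t\mapsto\ln\|D\phi_t(x)\|$ is only subadditive, so Pliss cannot be applied to it directly; the paper's fix is to pass to the additive directional quantities $H_G(t,v)=\int_0^tD_G(\Phi_sv)\,ds$ (Theorem~\ref{thm:additivity}), apply the flow Pliss lemma direction by direction, and then unify over $v$ by compactness of the unit sphere---your route needs the same maneuver.
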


Since the flow $\phi_t$ induced by a vector field
$G\in\fX^1(M)$ satisfies
$D\phi_t(x)G(x)=G(\phi_tx), x\in M, t\in\RR$, it is natural
to consider trajectories which have asymptotic contraction
along all transversal directions to the vector field, which we
refer to as \emph{sectional asymptotic contraction}.

\subsubsection{Negative sectional exponents and sinks}
\label{sec:negative-section-exp}

However, weak sectional asymptotic contraction along a given
trajectory does not necessarily implies that this trajectory
converges to a sink, either a singularity or a periodic
orbit, as the following example shows.

\begin{example}\label{ex:Bowen}
  Consider the vector field known as ``Bowen example''; see
  e.g. \cite{Ta95} and Figure~\ref{fig:bowen}.
  \begin{figure}[htpb]
    \centering
    \includegraphics[width=11cm]{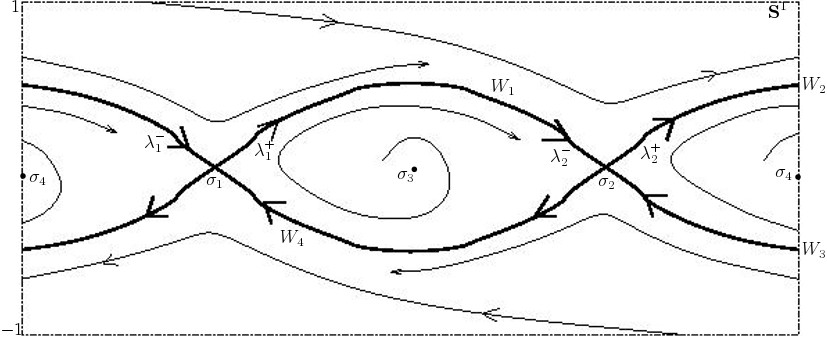}
    \caption{A sketch of Bowen's example flow.}
    \label{fig:bowen}
  \end{figure}
  This vector field is inwardly transverse to the boundary
  of $M=\sS^1\times [-1,1]$. Let
  $W=\cup_{i=1}^4W_i$ be the set formed by the heteroclinic
  connections between and including the equilibria
  $\sigma_1,\sigma_2$.  The future trajectories under the
  corresponding flow $\phi_t$ of every $z\in M\setminus W$
  accumulates on either side of the heteroclinic
  connections, as suggested in the figure, if we impose the
  condition
  $\lambda_1^{-} \lambda_2^{-} > \lambda_1^{+}
  \lambda_2^{+}$ on the eigenvalues of the saddle equilibria
  $\sigma_1$ and $\sigma_2$ (for more specifics on this see
  \cite{Ta95} and references therein) so that $\sigma_i$ are
  area contracting: $|\det D\phi_t(\sigma_i)|\to0$
  exponentially fast with $t>0$, $i=1,2$.

  It is well-known (see \cite{Ta95} for more details) that
  the time taken by the orbit $\phi_tx$ of any point $x$ in
  the connected components of $\sS^2\setminus W$ containing
  one of $\sigma_3,\sigma_4$, with exception of the
  equilibria $\sigma_3,\sigma_4$, while passing through a
  small neighborhood of either $\sigma_1$ or $\sigma_2$ is
  much larger than all the previous history of the
  orbit. Then the rate $\ln\|P_x^T\|^{1/T}$ oscillates
  between the value of $\lambda_i^+$ (when approaching) and
  $\lambda_i^-$ (at departure) at each passage near $\sigma_i$,
  $i=1,2$, that is
  $ \liminf_{T\to\infty}\ln\|P^T_x\|^{1/T}<
  0<\limsup_{T\to\infty}\ln\|P^T_x\|^{1/T}.  $

  All points $z$ on the connected components of
  $\sS^2\setminus W$ containing the boundary of $M$ also
  accumulate $W$ and thus satisfy the same asymptotic rates.
\end{example}

The previous Example~\ref{ex:Bowen} motivates us to state a
partial analogue to Theorem~\ref{mthm:negativexp-sink} in
the vector field setting.

\begin{maintheorem}
  \label{mthm:weaknegexpFlow}
  Let $G\in\fX^1(M)$ be such that $\sing(G)$ (possibly
  empty) is hyperbolic.  If $x\in M\setminus\sing(G)$
  satisfies $\liminf_{T\to\infty}\ln\|P_x^T\|^{1/T} <0$,
  then
  \begin{itemize}
  \item either $x$ is contained in the basin of attraction of a
    sink: either an attracting equilibrium or a hyperbolic
    periodic attracting orbit; 
  \item or the orbit of $x$ accumulates a hyperbolic
    codimension $1$ saddle singularity\footnote{The stable
    manifold of the singularity has codimension one as an
    immersed submanifold of $M$.}.
  \end{itemize}
\end{maintheorem}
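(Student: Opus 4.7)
The plan is to combine the reverse hyperbolic times machinery for the Linear Poincar\'e Flow (the version of Pliss' Lemma for flows announced in the abstract) with a dichotomy on the accumulation points of the orbit of $x$. Since $\ln\|P_x^{\bullet}\|$ is a subadditive cocycle, the subadditive-to-additive reduction for $C^1$ vector fields (also announced in the abstract), together with the hypothesis $\liminf_{T\to\infty}T^{-1}\ln\|P_x^T\|<0$, should produce a sequence $t_n\nearrow\infty$ of hyperbolic times along the orbit of $x$ at which a uniform backward contraction
\[
\|P^{t_n-s}_{\phi_s x}\|\le e^{-\lambda(t_n-s)},\qquad 0\le s\le t_n,
\]
holds for some fixed $\lambda>0$. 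Stable manifold theory for the LPF then attaches to each $\phi_{t_n}(x)$ a local stable manifold of uniform size inside a cross-section transverse to $G$. By compactness of $M$, I pass to a subsequence with $\phi_{t_n}(x)\to y\in\omega_G(x)$ and split according to whether $y$ is regular or a singularity.

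If $y\notin\sing(G)$, the uniform local stable manifolds at $\phi_{t_n}(x)$ pass in the limit to a local stable manifold of definite size through $y$ on a cross-section $\Sigma$ transverse to $G$ at $y$. Since $y\in\omega_G(x)$, the forward orbit of $x$ returns to $\Sigma$ inside these stable manifolds infinitely often, and the LPF contraction forces the first-return map on $\Sigma$ to be uniformly contracting near $y$. A contraction fixed point (or inverse-limit) argument on this return map then produces a hyperbolic attracting periodic orbit $\gamma$ close to $y$, and the orbit of $x$ lies in its basin; this is the first alternative in the statement.

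If $y=\sigma\in\sing(G)$, hyperbolicity of $\sigma$ gives a Hartman--Grobman neighborhood $U$ of $\sigma$. When $\sigma$ is a sink its basin is open and positively invariant, so once $\phi_{t_n}(x)\in U$ the orbit of $x$ is trapped and converges to $\sigma$. When $\sigma$ is a saddle with unstable index $u=\dim W^u_G(\sigma)\ge 1$, I would use the linearization to estimate the LPF along an excursion of the orbit of $x$ through $U$: on exit the flow direction is essentially tangent to a dominant unstable eigenvector, so the normal bundle to $G$ carries $u-1$ unstable eigendirections of $DG(\sigma)$, each of which expands $P^t$ during the excursion at a rate proportional to the time spent near $\sigma$. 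Since $\sigma\in\omega_G(x)$, these excursion times are unbounded along the orbit. If $u\ge 2$, the expansion accumulated during a single long excursion can be inserted as a suffix ending at a later hyperbolic time $t_n$, contradicting the uniform backward contraction above. Hence $u=1$, i.e.\ $\sigma$ is a codimension $1$ saddle, the second alternative.

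The step I expect to be hardest is the quantitative form of this last saddle argument: turning the heuristic ``$u-1$ expanding normal directions expand the LPF during each long passage near $\sigma$'' into a genuine contradiction with the hyperbolic-time estimate. This requires a sharp estimate of the LPF near a hyperbolic singularity that correctly handles the rotation of the flow direction from the stable to the unstable eigenspace during a passage, together with a scheduling argument matching excursion times with the hyperbolic-time sequence. The regular-accumulation case, by comparison, should be largely standard once the uniform-size local stable manifolds are in hand.
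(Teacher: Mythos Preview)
Your high-level plan (Pliss for the LPF, then split on the nature of the accumulation point) matches the paper, but the orientation of your hyperbolic times is inverted, and this breaks the ``stable manifold at $\phi_{t_n}(x)$'' step.

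The inequality you write, $\|P^{t_n-s}_{\phi_s x}\|\le e^{-\lambda(t_n-s)}$ for $0\le s\le t_n$, says that the LPF \emph{from each earlier point $\phi_s x$ up to the fixed terminal time $t_n$} contracts. This gives you no forward information at $\phi_{t_n}(x)$: you know nothing about $P^u_{\phi_{t_n}x}$ for $u>0$, so there is no stable manifold of uniform size attached at $\phi_{t_n}(x)$, and the limiting ``stable manifold through $y$'' and the ``contracting first-return map near $y$'' do not follow. What the paper extracts from Pliss (Theorem~\ref{thm:plissflows}) is the opposite orientation: \emph{reverse} hyperbolic times $\tau_n<T_n$ with $T_n-\tau_n\to\infty$ and
\[
\|P^{s-\tau_n}_{\phi_{\tau_n}x}\|\le e^{-\zeta(s-\tau_n)/2},\qquad \tau_n\le s\le T_n,
\]
i.e.\ uniform \emph{forward} contraction from the single base point $\phi_{\tau_n}(x)$ for an arbitrarily long time. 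This is what allows bounded distortion to promote the infinitesimal estimate to a genuine forward-contracting cross-sectional ball at $\phi_{\tau_n}(x)$ of uniform radius (Proposition~\ref{pr:tubecontraction}). One then looks at accumulation points of $\phi_{\tau_n}(x)$ (not of $\phi_{t_n}(x)$) and runs a nested-contractions argument on a fixed cross-section near the limit to produce a periodic sink; this is the regular case.

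For the saddle case your intuition is right but the mechanism the paper uses is different and cleaner than ``count expanding normal directions during an excursion''. When $\phi_{\tau_k}(x)$ accumulates a hyperbolic saddle $\sigma$, a Gronwall estimate (Lemma~\ref{le:Gronwall}) gives $\|P^t_{x(\tau_k)}-\cO_{x(\tau_k+t)}e^{tDG_\sigma}\|\to0$ for each fixed $t$. Since the reverse hyperbolic time forces $\|P^t_{x(\tau_k)}\|\le e^{-\zeta t/2}$, any $v\in E^u_\sigma$ with $\|\cO_{x(\tau_k+t)}v\|\ge\|v\|/2$ would satisfy $\tfrac12\|e^{tDG_\sigma}v\|\le(\bar\delta_k+e^{-\zeta t/2})\|v\|$, which for arbitrary $t$ forces $v=0$ unless $G$ at the relevant point is nearly parallel to $E^u_\sigma$. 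This immediately gives $\dim E^s_\sigma\ge d-1$, hence $\sigma$ is a codimension-one saddle. No detailed scheduling of excursion times against hyperbolic times is needed; the comparison with the linearised flow does all the work once you have forward contraction starting at $\phi_{\tau_k}(x)$.
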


To obtain the same conclusion as
Theorem~\ref{mthm:negativexp-sink} for a sectional
contracting trajectory of a vector field, we need to assume
a stronger condition on the asymptotic contracting rate.

\begin{maintheorem}
    \label{mthm:strongnegexpFlow}
    Let $G\in\fX^1(M)$ be such that $\sing(G)$ is
    hyperbolic.  If $x\in M\setminus\sing(G)$ is such that
    $\limsup_{T\to\infty}\ln\|P_x^T\|^{1/T} <0$, then $x$
    is contained in the basin of attraction of a sink:
    either an attracting equilibrium or a hyperbolic
    periodic attracting orbit.
\end{maintheorem}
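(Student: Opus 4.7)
The hypothesis $\limsup_{T\to+\infty}\ln\|P^T_x\|^{1/T}<0$ at once implies $\liminf_{T\to+\infty}\ln\|P^T_x\|^{1/T}<0$, so Theorem~\ref{mthm:weaknegexpFlow} already supplies a dichotomy: either $x$ lies in the basin of a sink (the desired conclusion), or the forward orbit of $x$ accumulates a hyperbolic codimension-one saddle singularity $\sigma$. The task reduces to ruling out the second alternative by exploiting the strictly stronger $\limsup$ hypothesis.

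The key point is that $\limsup<0$ produces a \emph{uniform} eventual normal contraction $\|P^T_x\|\le e^{-cT}$ (for some $c>0$ and all $T$ large), while the codimension-one saddle $\sigma$ carries a one-dimensional unstable eigenspace $E^u\subset T_\sigma M$ associated to the positive eigenvalue $\lambda^+$ of $DG(\sigma)$. Near any regular point $y$ close to $\sigma$ this direction projects to a one-dimensional subspace $\tilde E^u_y\subset N_y$ on which the Linear Poincar\'e Flow $P^t_y$ expands at rate essentially $\lambda^+$. These two pieces of information are incompatible, and I would split the argument according to whether $x$ belongs to $W^s(\sigma)$.

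If $x\in W^s(\sigma)$, then $\phi_tx\to\sigma$; choosing $t_0$ large enough so that $\phi_{t_0}x$ lies in a linearization chart of $\sigma$, the normal plane $N_{\phi_{t_0}x}$ asymptotically contains $E^u$ (since $G(\phi_{t_0}x)$ lies approximately in the stable eigenspace of $DG(\sigma)$). One then obtains $\|P^T_{\phi_{t_0}x}\|\ge c_0\,e^{(\lambda^+-\varepsilon)T}$ for every $T\ge 0$. Combined with the cocycle identity $P^{t_0+T}_x=P^T_{\phi_{t_0}x}\circ P^{t_0}_x$ and the standard inequality $\|AB\|\ge\|A\|\cdot\sigma_{\min}(B)$, this yields $\|P^{t_0+T}_x\|\ge c_0\,\sigma_{\min}(P^{t_0}_x)\,e^{(\lambda^+-\varepsilon)T}$, which grows exponentially in $T$ for $t_0$ fixed, in flat contradiction with $\|P^{t_0+T}_x\|\le e^{-c(t_0+T)}\to 0$.

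The delicate case is $x\notin W^s(\sigma)$ while $\sigma\in\omega_G(x)$: the orbit performs infinitely many excursions close to $\sigma$ with sojourn times $\tau_n\to\infty$ and closest approaches $d_n\to 0$, picking up expansion of order $e^{\lambda^+\tau_n}$ along $\tilde E^u$ on each excursion. The main obstacle here is that the inter-excursion times $t_n$ are a priori uncontrolled, so the one-shot cocycle lower bound can be swallowed by the running contraction $e^{-ct_n}$ accumulated over the past history. I would bypass this by combining the reverse Pliss-type lemma for flows with the additivity of the subadditive cocycle $t\mapsto\ln\|P^t_x\|$ over $C^1$ vector fields (both tools announced in the introduction): the additivity rewrites the asymptotic rate as an integrated average $\frac{1}{t}\int_0^t\beta(\phi_sx)\,ds$ of a function $\beta$ measuring infinitesimal normal growth, while the Pliss lemma selects a subsequence of good times at which this average controls $\ln\|P^t_x\|$ pointwise. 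Near $\sigma$ the contribution of $\beta$ along $\tilde E^u$ is close to $\lambda^+>0$, so a sufficiently deep excursion forces the average along a subsequence to be non-negative, contradicting $\limsup<0$.
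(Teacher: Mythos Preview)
Your overall strategy is correct and matches the paper's: invoke Theorem~\ref{mthm:weaknegexpFlow} via $\limsup\le\liminf$, then use the stronger hypothesis to exclude the codimension-one saddle alternative. Your treatment of the case $x\in W^s(\sigma)$ is fine (and in fact already absorbed into the proof of Theorem~\ref{mthm:weaknegexpFlow} through Lemma~\ref{le:stablemanifold}).

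The gap is in the recurrent case $\sigma\in\omega_G(x)$, $x\notin W^s(\sigma)$. Your proposed mechanism ``near $\sigma$ the contribution of $\beta$ along $\tilde E^u$ is close to $\lambda^+$'' does not survive scrutiny. First, the norm cocycle $t\mapsto\ln\|P^t_x\|$ is only bounded between $\int_0^tD_-$ and $\int_0^tD_+$ (Lemma~\ref{le:claimDx}); genuine additivity holds for the \emph{directional} cocycle $\ln\|P^t_xv\|$, whose generator depends on $\wh\Phi_sv$, not just on the base point. Second, and more seriously, the infinitesimal generator near $\sigma$ is \emph{not} uniformly close to $\lambda^+$: it depends on the angle between $G(\phi_sx)$ and $E^u_\sigma$. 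During the approach phase $G$ lies approximately in $E^s_\sigma$ and the normal bundle carries $E^u_\sigma$, so the generator is $\approx\lambda^+>0$; but during the departure phase $G$ aligns with $E^u_\sigma$, the normal bundle becomes $\approx E^s_\sigma$, and the generator is $\approx\lambda^s_{\max}<0$. A time-average over a full excursion therefore need not be positive, and no direct Pliss argument forces it to be. The sentence ``a sufficiently deep excursion forces the average along a subsequence to be non-negative'' is precisely the point that needs a geometric device, not just Pliss plus additivity.

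The paper supplies that device: it constructs a cusp-shaped hypersurface $\Sigma'$ tangent to $E^s_\sigma$ at $\sigma$ (Lemma~\ref{le:cusptransversal}) with two properties: (i) $\Sigma'\setminus\{\sigma\}$ is a genuine Poincar\'e section near $\sigma$, so every excursion must hit it at some time $T_k$ with $x(T_k)\to\sigma$; (ii) along $\Sigma'$ one has $\cos\angle(G(z),E^u_\sigma)\to0$, so at the crossing times the normal space $N_{x(T_k)}$ contains $E^u_\sigma$ up to an arbitrarily small defect. Because the $\limsup$ hypothesis gives $\ln\|P^{T_k}_x\|<-\xi T_k$ at \emph{these} particular times, one can run Pliss to produce reverse hyperbolic times $\tau_k<T_k$, then apply Gronwall (Lemma~\ref{le:Gronwall}) to compare $P^{T_k-\tau_k}_{x(\tau_k)}$ with $\cO_{x(T_k)}e^{(T_k-\tau_k)DG_\sigma}$. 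Property (ii) guarantees $\|\cO_{x(T_k)}v\|\ge\|v\|/2$ for $v\in E^u_\sigma$, and the contraction of the reverse hyperbolic time then forces $E^u_\sigma=\{0\}$, the desired contradiction. Your sketch is missing exactly this cross-section construction; without it the argument does not close.
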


\begin{remark}
  \label{rmk:opendense}
  \begin{enumerate}
  \item We do not need H\"older continuity of the derivative
    in the arguments proving Theorems~\ref{mthm:weaknegexpFlow}
    and~\ref{mthm:strongnegexpFlow} and corollaries.
  \item The condition ``$\sing(G)$ is hyperbolic'' imposed
    on $G$ in the statement of
    Theorems~\ref{mthm:weaknegexpFlow}
    and~\ref{mthm:strongnegexpFlow} is satisfied by an open
    and dense subset of $\fX^1(M)$; see e.g. \cite{PM82}.
  \item In the particular case $\sing(G)=\emptyset$, 
    Theorems~\ref{mthm:weaknegexpFlow}
    and~\ref{mthm:strongnegexpFlow} become the direct
    analogue to Theorem~\ref{mthm:negativexp-sink} in the
    vector field setting: the trajectory of $x$ converges to
    either an attracting fixed point of the flow or to an
    attracting periodic orbit, even if
    asymptotic contraction only holds sectionally.
  \end{enumerate}
\end{remark}

\subsubsection{Positive sectional exponents and sources}
\label{sec:positive-section-exp}

Akin to expanding maps and expanding
measures, for expanding semiflows the asymptotic expansion
condition on a given trajectory does not necessarily implies
that the trajectory is a (periodic) source.

\begin{example}
  \label{ex:Lorenz-semiflow}
  The geometrical Lorenz expanding semiflow introduced by
  Williams~\cite{Wil79} exhibits asymptotic expansion in the
  transversal direction of all positive time trajectories
  not falling into the singularity, has a dense regular
  trajectory and a dense subset of periodic expanding
  trajectories; see~\cite{Wil79} for details.
  % \begin{figure}[htpb]
  %   \centering
  %   \includegraphics[width=5cm]{Lorenz-semiflow}
  %   \caption[Lorenz semiflow]{A sketch of the dynamics of
  %     the geometrical Lorenz expanding semiflow.}
  %   \label{fig:Lorenz-semiflow}
  % \end{figure}
\end{example}

The analogous to Theorem~\ref{mthm:negexpdiffeo} is also
true for sectional expansion.

\begin{maintheorem}
  \label{mthm:posectionalexp}
  Let $G\in\fX^1(M)$ be such that $\sing(G)$ is hyperbolic.
  If $x\in M\setminus\sing(G)$ satisfies
  $\liminf_{T\to\infty}\ln\|(P_x^T)^{-1}\|^{1/T} <0$, then
  \begin{enumerate}
  \item either $x$ belongs to a hyperbolic periodic
    repelling orbit;
  \item or the orbit of $x$ accumulates a hyperbolic saddle
    singularity of index $1$\footnote{The stable manifold
    of the singularity has dimension one as an immersed
    submanifold of $M$.}.
  \end{enumerate}
  If $x\in M\setminus\sing(G)$ is such that
    $\limsup_{T\to\infty}\ln\|(P_x^T)^{-1}\|^{1/T} <0$, then $x$
    satisfies item (1).
\end{maintheorem}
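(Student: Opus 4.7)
The strategy is to reduce to Theorem~\ref{mthm:weaknegexpFlow} and Theorem~\ref{mthm:strongnegexpFlow} via time reversal. Under the flow $\psi_t := \phi_{-t}$ of $-G$, the critical elements coincide with those of $G$: hyperbolic periodic repelling orbits of $G$ are hyperbolic periodic attractors of $-G$; hyperbolic sources of $G$ are hyperbolic sinks of $-G$; and hyperbolic saddle singularities of index $1$ for $G$ (stable manifold of dimension $1$) are exactly the codimension-one saddles of $-G$. In particular $\sing(-G)=\sing(G)$ is still hyperbolic.

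The cocycle relation for the LPF yields, writing $\tilde P$ for the LPF of $-G$,
\begin{align*}
\tilde P^s_y \,=\, P^{-s}_y \,=\, \bigl(P^s_{\phi_{-s}y}\bigr)^{-1},
\end{align*}
so that $\|\tilde P^s_{\phi_s x}\|=\|(P^s_x)^{-1}\|$. Hence $\liminf_{T\to\infty}\ln\|(P^T_x)^{-1}\|^{1/T}<-\alpha<0$ supplies a sequence $T_n\uparrow\infty$ with $\|\tilde P^{T_n}_{\phi_{T_n}x}\|\le e^{-\alpha T_n}$. These $T_n$ are the \emph{reverse hyperbolic times} along the $G$-orbit of $x$ referred to in the abstract, and they are ordinary hyperbolic times at the shifted base points $\phi_{T_n}x$ for the flow of $-G$.

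By compactness of $M$, extract a subsequence with $\phi_{T_n}x\to y^*\in\omega_G(x)=\alpha_{-G}(x)$. A bootstrap argument — combining continuity of the LPF on the regular set with the uniformity provided by reverse hyperbolic times, paralleling the proof of Theorem~\ref{mthm:weaknegexpFlow} — transfers the negative-exponent condition from the sequence $\phi_{T_n}x$ to $y^*$, or to a point on its $-G$-forward orbit. If $y^*$ is regular, apply Theorem~\ref{mthm:weaknegexpFlow} to $-G$ at $y^*$. The two conclusions are: \emph{(a)} the $-G$-forward orbit of $y^*$ lies in the basin of a hyperbolic sink of $-G$, i.e.\ a source of $G$; invariance of the basin together with $y^*\in\omega_G(x)$ then forces the $G$-orbit of $x$ into this repelling region, and a direct analysis shows $x$ itself must lie on a hyperbolic periodic repelling orbit (item (1)); or \emph{(b)} the $-G$-forward orbit of $y^*$ accumulates a codimension-one saddle of $-G$, which is an index-$1$ saddle of $G$ lying in $\alpha_{-G}(x)=\omega_G(x)$ (item (2)). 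If instead $y^*\in\sing(G)$, hyperbolicity of $\sing(G)$ together with the sectional expansion at $y^*$ (a $(d-1)$-dimensional subspace of $T_{y^*}M$ on which $D\phi_t(y^*)^{-1}$ contracts) forces $y^*$ to be an index-$1$ saddle, again yielding item (2).

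For the stronger hypothesis $\limsup_{T\to\infty}\ln\|(P^T_x)^{-1}\|^{1/T}<0$, the uniform rate holds at all large times; the parallel application of Theorem~\ref{mthm:strongnegexpFlow} to $-G$ rules out saddle accumulation (an index-$1$ saddle approached in forward $-G$-time would produce oscillating exponents as in the time-reversed Example~\ref{ex:Bowen}), and only item (1) survives. The main obstacle is the \emph{shifting base point}: the assumption lives at $x$ but, after reversal, measures contraction of the $-G$-LPF at the moving points $\phi_T x$; producing a reverse-hyperbolic-times/Pliss statement that concentrates this information at a single regular limit point, and correctly reading the index of any singular limit, is the heart of the proof.
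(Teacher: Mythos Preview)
Your time-reversal heuristic is natural, and you correctly identify that the ``shifting base point'' is the central difficulty --- but you do not actually resolve it, and the paper does \emph{not} proceed by reducing to Theorems~\ref{mthm:weaknegexpFlow} and~\ref{mthm:strongnegexpFlow} for $-G$ at a limit point.

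The concrete gap is this: from $\|\tilde P^{T_n}_{\phi_{T_n}x}\|\le e^{-\alpha T_n}$ with $\phi_{T_n}x\to y^*$ you cannot conclude $\liminf_{T\to\infty}T^{-1}\ln\|\tilde P^T_{y^*}\|<0$, which is what applying Theorem~\ref{mthm:weaknegexpFlow} to $-G$ at $y^*$ requires. The $-G$-forward orbit of $y^*$ lies in $\omega_G(x)$ and need not shadow the $G$-backward orbit of $\phi_{T_n}x$ (which returns to $x$); continuity of the LPF on the regular set buys you nothing over unbounded time intervals. Your ``bootstrap argument'' is a name for the missing step, not a proof of it. Moreover, even granting that $y^*$ lands in the $-G$-basin of some sink $\gamma$, you still need that $x$ itself lies \emph{on} the repelling orbit, not merely that $\omega_G(x)$ meets it --- and for that you need a mechanism carrying contraction from $\phi_{\tau}x$ all the way back to $x$.

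The paper instead applies the Pliss-for-flows lemma (Theorem~\ref{thm:plissflows}) directly to the reversed integral $\tilde H(t,v)=\int_{T_n-t}^{T_n}\tilde D(\wh{\Phi_s}v)\,ds$ of the infinitesimal generator of $\ln\|(P^t_x)^{-1}\|$. This produces hyperbolic times $\tau_n\nearrow\infty$ along the \emph{actual orbit of $x$} at which $\|(P^{\tau_n-t}_{\phi_t x})^{-1}\|\le e^{-\zeta(\tau_n-t)/4}$ for every $0<t<\tau_n$, i.e.\ uniform backward contraction from $x(\tau_n)$ down to $x(0)=x$. One then builds \emph{backward} contracting Poincar\'e sections (Proposition~\ref{pr:tubexpansion}, the analogue of Proposition~\ref{pr:tubecontraction} for the time-reversed Poincar\'e maps $R_s:S_{x(\tau_n)}(\xi_0)\to S_{x(s)}(\xi_0)$) and runs the nested-contraction argument of Subsection~\ref{sec:nested-contract-argu} at an accumulation point of $x(\tau_n)$. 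This yields a periodic repelling orbit near $x(\tau_n)$; since the backward contraction reaches $t=0$ and $\tau_n$ is unbounded, $x$ is forced onto that orbit exactly as in the proof of Theorem~\ref{mthm:negexpdiffeo}. The singular-accumulation and $\limsup$ cases then follow by rerunning Subsections~\ref{sec:orbit-accumul-some} and~\ref{sec:limsup-case} with these backward objects. The point is that the paper never leaves the orbit of $x$, which is precisely what lets it conclude about $x$ rather than about $y^*$.
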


\begin{remark}
  \label{rmk:bowenexample} Example~\ref{ex:Bowen} also
  provides an instance of item (2) in the statement of
  Theorem~\ref{mthm:posectionalexp}. This example is easily
  adapted to higher dimensions: just multiply Bowen's vector
  field $G$ by a ``North-South'' vector field in the $n$th
  sphere $\sS^n, n\ge1$ to obtain higher dimensional
  instances of Theorems~\ref{mthm:weaknegexpFlow}
  and~\ref{mthm:posectionalexp}.
\end{remark}

%%%%%%%%%%%%%%%%%%%%%%%%%%%%%%%%%%%%%%%%%%%%%%%%

\subsection{Comments, corollaries and conjectures}
\label{sec:conjectures}

We comment and state some corollaries of the results in what
follows, and then some conjectures. The proofs of the
corollaries are given later in the text: see next
Subsection~\ref{sec:organization-text} on the organization
of this text.

\subsubsection{The $C^1$ endomorphism setting}
\label{sec:discrete-setting}

\subsubsection*{Negative Lyapunov exponents everywhere}

\begin{corollary}
  \label{cor:compactnegexpperiodic}
  Let $K\subset M$ be a compact $f$-invariant subset such
  that $\chi^-(x)<0$ for all $x\in K$. Then $K$ is the union
  of a finite family of sinks.
\end{corollary}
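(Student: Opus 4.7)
The plan is to combine Corollary~\ref{cor:negativexp-sink} (which pins down invariant measures whose Lyapunov exponents are negative) with a trapping argument and a covering argument to show that $K$ itself is a finite disjoint union of sinks.

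First, for any $f$-invariant probability measure $\mu$ supported on $K$, Kingman's subadditive ergodic theorem guarantees that $\chi(x)=\lim_n\tfrac1n\ln\|Df^n(x)\|$ exists $\mu$-almost everywhere and equals $\chi^-(x)<0$. The ergodic decomposition together with the ergodic case of Corollary~\ref{cor:negativexp-sink} then forces every ergodic component of $\mu$ to be a Dirac mass on some sink contained in $K$. Hence for any $x\in K$, the compact invariant subset $\omega(x)\subset K$ carries some invariant probability (Krylov--Bogolyubov) which must therefore charge a sink $\cO_i\subset K$, giving $\cO_i\subset\omega(x)$. Because $\cO_i$ is a hyperbolic attracting periodic orbit, there is a trapping neighborhood $U_i$ with $f(U_i)\subset U_i$ on which $f^n$ contracts uniformly to $\cO_i$. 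Since the orbit of $x$ accumulates $\cO_i$ it enters $U_i$, never leaves, and converges to $\cO_i$; consequently $\omega(x)=\cO_i$ and $x\in B(\cO_i)$.

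The basins of distinct hyperbolic sinks are disjoint open sets covering the compact $K$, so only finitely many sinks $\cO_1,\dots,\cO_m\subset K$ are involved, producing the clopen decomposition $K=\bigsqcup_{i=1}^m K_i$ with $K_i:=K\cap B(\cO_i)$. The invariance $f(K)=K$, combined with the fact that basins are both forward invariant and invariant under any preimage that falls in the basin, gives $f(K_i)=K_i$. An upper semicontinuous (hence bounded) first-entry time into $U_i$ on the compact set $K_i\setminus U_i$, combined with the uniform contraction of $f^n$ inside $U_i$, produces constants $C,\beta>0$ with $\|Df^n(x)\|\le Ce^{-\beta n}$ for every $x\in K_i$ and $n\ge0$.

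A box-counting argument finishes the proof: cover $K_i$ by $N$ balls of small radius $r$ inside a tubular neighborhood where $f^n$ is Lipschitz with constant close to $\sup_{K_i}\|Df^n\|$. Then the invariance $f^n(K_i)=K_i$ implies $K_i$ is covered by $N$ balls of radius $\le C'e^{-\beta n}r\to 0$, forcing $|K_i|\le N<\infty$. A finite set $K_i$ with $f(K_i)=K_i$ and $K_i\subset B(\cO_i)$ is a disjoint union of periodic orbits of $f$ lying in the basin of $\cO_i$; but the only periodic orbit inside $B(\cO_i)$ is $\cO_i$ itself, so $K_i=\cO_i$ and $K=\bigcup_{i=1}^m\cO_i$. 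The main technical hurdle is the trapping step placing each orbit in a basin (which needs hyperbolicity of the sinks to produce a genuine trapping neighborhood) and the derivation of uniform contraction on each $K_i$; once these are in hand the covering argument is routine.
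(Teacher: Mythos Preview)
Your proof is correct, but it takes a genuinely different route from the paper's.

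\textbf{Placing each $x\in K$ in a basin.} The paper recycles the contradiction argument from the proof of Theorem~\ref{mthm:allnegexpfinitesinks}: since $\chi^-(x)<0$ on all of $K$, every invariant measure supported on $K$ has $\chi<0$ a.e., and one deduces directly that each $x\in K$ satisfies $A^-_k(x)<0$ for some $k$; Theorem~\ref{mthm:negativexp-sink} then places $x$ in a basin. You instead go through Corollary~\ref{cor:negativexp-sink} applied to an invariant measure on $\omega(x)$, locate a sink inside $\omega(x)$, and use the trapping neighborhood to pull $x$ into the basin. Your detour through $\omega(x)$ and Krylov--Bogolyubov is perfectly valid and perhaps more self-contained, since it avoids re-running the weak$^*$ limit argument hidden in Theorem~\ref{mthm:allnegexpfinitesinks}.

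\textbf{Concluding $K_i=\cO_i$.} Here the paper is much quicker: from $f(K_i)=K_i$ it writes $K_i\subset\bigcap_{n\ge0}f^n(B_i)=\cO_i$ and is done. Your uniform-contraction plus box-counting argument reaches the same conclusion but is heavier than needed, and the step ``cover $K_i$ by $N$ balls inside a tubular neighborhood where $f^n$ is Lipschitz with constant close to $\sup_{K_i}\|Df^n\|$'' deserves a word of care, since a priori that neighborhood could shrink with $n$. The fix is already implicit in what you wrote: once the bounded entry time gives $f^{N_0}(K_i)\subset U_i$, the identity $K_i=f^{N_0}(K_i)$ forces $K_i\subset U_i$, and then the derivative bound holds on the fixed open set $U_i$ for all $n$. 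In fact at that point the box-counting is superfluous: $K_i=f^{n}(K_i)\subset f^{n}(U_i)$ for every $n$, and $\bigcap_n f^n(U_i)=\cO_i$, so $K_i\subset\cO_i$ immediately. This is essentially the paper's one-line conclusion.
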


The setting of Theorems~\ref{mthm:negativexp-sink}
and~\ref{mthm:allnegexpfinitesinks} is robust: there exists
a $C^1$ neighborhood $\cU$ of $f$ such that each $g\in\cU$
satisfies the same assumptions and conclusions.

\begin{example}\label{ex:uniquesink}
  An example of a $C^1$ endomorphism $f:\sS^2\to\sS^2$
  satisfying the conclusion of
  Theorem~\ref{mthm:allnegexpfinitesinks} can be given as
  follows: consider
  \begin{itemize}
  \item $h:\sS^2\circlearrowleft$ the North-South map on
    $\sS^2=\{(x,y,z)\in\RR^3:x^2+y^2+z^2=1\}$, given by the
    time-$1$ map of the gradient flow $\dot w=\nabla\vfi(w)$
    with $\vfi(x,y,z)=z$, where $w=(x,y,z)\in\sS^2$;
  \item $P$ the stereographic projection from $N(0,0,1)$ to
    $\sS^2$.
  \end{itemize}
  Then let $g:\sS^2\to\CC$ be the $C^1$ map that sends
  $\sS^2$ to the half-sphere $\sS^2\cap\{z<0\}$ together
  with the surface $R$ of revolution generated by the three
  half-circles with diameter $1/3$ drawn in the left hand
  side of Figure~\ref{fig:projecao}. We choose
  $g\mid_{\sS^2\cap\{z<0\}}$ to be the identity and
  $g\mid_{\sS^2\cap\{z\ge0\}}$ as the vertical projection
  from $\sS^2\cap\{z\ge0\}$ to $S$.

\begin{figure}[h]
  \centering \includegraphics[width=3.5in]{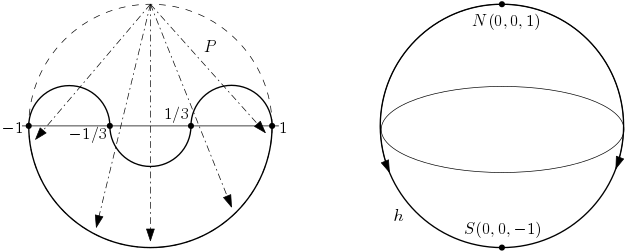}
  \caption{\label{fig:projecao} Maps whose composition
    defines a $C^1$ transformation with negative Lyapunov
    exponents everywhere and a unique sink with full basin.}
\end{figure}

Finally, define $f$ as the composition $h\circ P\circ g$.
Note that the image of $f_0=P\circ g$ is contained in
$\sS^2\cap\{z\le0\}$ and so the image of $f$ is contained in
$\sS^2\cap\{z<0\}$, hence $f$ is a strict contraction.
% and is a weak contraction, that
% is, $|f_0z_1-f_0z_2|\le|z_1-z_2|$.
Moreover, $f_0(N)=S(0,0,-1)$.  Hence $f=h\circ f_0$
contracts distances uniformly and fixes $S$, which is a sink
attracting all points of $\sS^2$.
\end{example}

% \begin{remark}
%   \label{rmk:nohistoric}
%   In particular, Birkhoff averages
%   $\lim_{n\to\infty}\frac1n\sum_{j=0}^{n-1}\vfi(f^jx)=\frac1{\pi_p}\sum_{i=1}^{\pi_p}\vfi(f^ip)$
%   are well defined for all points $x\in M$ and all
%   continuous functions $\vfi:M\to\RR$ in the setting of
%   Theorem~\ref{mthm:allnegexpfinitesinks}, where $\pi_p$ is
%   the period of the periodic orbit $\cO_f(p)$. Hence no
%   \emph{historical behavior} is allowed and the dynamics is
%   very simple.
% \end{remark}

\subsubsection*{Negative exponents Lebesgue almost
  everywhere}
\label{sec:negative-exponents-l}

It is known that there are $C^1$ open families of local
diffeomorphisms satisfying $\tilde A^-_1(x)<0$ for Lebesgue
almost points of the ambient manifold and which are not
uniformly expanding; see~\cite{Pinheiro05} and references
therein and also \cite[Appendix]{ABV00} for a concrete
example of open classes of such local diffeomorphisms.

\begin{remark}
  \label{rmk:lebNUCversusLebexp}
  \begin{enumerate}
  \item It is well-known that for (expanding maps and)
    expanding measures there exists a dense subset of
    periodic sources in its support; see \cite{Pinheiro05}.
  \item It is known that $\tilde\chi<0$ for
    $\mu$-a.e. implies $\tilde A^-_k(x)<0$ $\mu$-a.e. for
    any given $f$-invariant measure $\mu$ and some
    $k\in\ZZ^+$; see e.g. \cite{ABV00}. It is conjectured
    that $\tilde A^-_1(x)<0$ for $m$-a.e. $x$ implies the
    existence of an $f$-invariant probability measure $\mu$
    satisfying $\tilde\chi(x)<0, \mu$-a.e. $x$; see
    e.g~\cite{Pinheiro05} and more
    recently~\cite{pinheiro18}.
  \end{enumerate}
\end{remark}

In our setting, it is natural to consider $C^1$ maps
satisfying $A^-_k<0$ for Lebesgue almost all points and
some $k\ge1$.

\begin{corollary}
  \label{cor:Leballneg}
  Let $f:M\to M$ be a $C^1$ map such that
  $\inf_{x\in M}\|Df(x)\|>0$. Then $A^-_k(x)<0,
  m$-a.e. $x\in M$ for some $k\in\ZZ^+$ if, and only if,
  there exists an at most countable family of Dirac masses
  concentrated on periodic attracting orbits (sinks) whose
  basins form an open, dense and also a full Lebesgue
  measure subset of $M$.
\end{corollary}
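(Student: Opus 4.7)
The plan is to derive both directions directly from Theorem~\ref{mthm:negativexp-sink}, combined with elementary topological properties of basins of sinks and the fact that Lebesgue measure has full topological support on $M$.

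For the forward implication, suppose that for $m$-almost every $x \in M$ there exists $k = k(x) \in \ZZ^+$ with $A^-_k(x) < 0$. Applying Theorem~\ref{mthm:negativexp-sink} pointwise, each such $x$ belongs to the basin $B(\cO_f(p))$ of some attracting periodic orbit. Each such basin is open and the basins of distinct sinks are pairwise disjoint; since $M$ is second countable, this forces the family of sinks encountered to be at most countable; enumerate them as $\cO_f(p_i)$, $i\ge 1$. Setting $U = \bigcup_{i\ge 1} B(\cO_f(p_i))$ one obtains an open subset which, by hypothesis, has full Lebesgue measure. Since $m$ has full topological support on $M$, every full-measure set is dense, so $U$ is also dense. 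Finally, for each $i$ one associates to $\cO_f(p_i)$ the Dirac probability measure $\mu_i$ equidistributed on the orbit, yielding the required at most countable family of Dirac masses whose union of basins is the set $U$.

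For the backward implication, assume such a family of sinks $\cO_f(p_i)$ is given with basins forming an open dense $m$-full measure subset $U \subset M$. For every $x \in U$ one has $\omega(x) = \cO_f(p_i)$ for some $i$, so $x$ lies in the basin of a sink, and Theorem~\ref{mthm:negativexp-sink} immediately supplies some $k = k(x) \in \ZZ^+$ with $A^-_k(x) < 0$. Since $U$ has full Lebesgue measure, the desired conclusion holds for $m$-a.e.\ $x$, concluding the argument.

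I do not anticipate a serious obstacle, since the corollary is essentially a topological repackaging of Theorem~\ref{mthm:negativexp-sink}; the only points of care are that basins of sinks are open and that pairwise disjoint open subsets of a second countable manifold form an at most countable family. One small interpretive remark: the reading adopted above allows $k$ to depend on $x$; demanding a single $k$ valid for $m$-a.e.\ $x$ would additionally require a uniform control on the periods of the sinks, but would leave the basin picture produced here unchanged.
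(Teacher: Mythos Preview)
Your argument is correct and follows essentially the same route as the paper: apply Theorem~\ref{mthm:negativexp-sink} pointwise to land $m$-a.e.\ $x$ in an open basin, then use that an open full-measure set is dense. Your write-up is in fact more complete than the paper's, which only records the forward implication; your treatment of the converse via the ``only if'' direction of Theorem~\ref{mthm:negativexp-sink} and your remark on the $x$-dependence of $k$ are both appropriate additions.
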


\begin{example}\label{ex:infiniteattractors}
  An example of a diffeomorphism satisfying the conclusion
  of Corollary~\ref{cor:Leballneg} can be constructed by the
  direct product of the map from \cite[Example
  1]{vdaraujo2001} and the North-South map, both on the
  circle.

  The latter is represented in Figure~\ref{fig:N-S} given by
  the time-$1$ map $h$ of the gradient flow
  $\dot z=\nabla \psi(z)$ with $\psi(x,y)=y$ on
  $z=(x,y)\in\sS^1=\{(x,y)\in\RR^2:x^2+y^2=1\}$.  The former can be
  seen as the time-$1$ map $g$ of the gradient flow
  $\dot x=\nabla\vfi(x)$ with
  $\vfi:[-\pi^{-1},\pi^{-1}]\to\RR, t\mapsto t^4\sin(1/t)$,
  (where we identify $\pm\pi^{-1}$ to obtain the circle)
  exhibiting a countable number of attracting fixed sinks
  whose basins cover the entire domain of $\vfi$ with the
  exception of the countably many local maxima $\{m_k\}$ of
  $\vfi$; see Figure~\ref{fig:sinus}.

  \begin{figure}
    \centering %copiar lado direito da figura anterior
    \includegraphics[width=3cm]{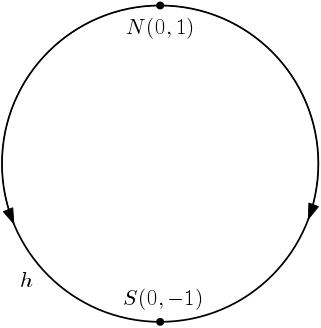}
    \caption{\label{fig:N-S}The North-South map on the circle.}
  \end{figure}

  Then $g\times h:\sS^1\times\sS^1\circlearrowleft$ is a
  $C^1$ map having a countable number of attracting fixed sinks
  whose basins cover the entire space
  $[-\pi^{-1},\pi^{-1}]\times\sS^1$ with the exception of
  the Lebesgue null sets $[-\pi^{-1},\pi^{-1}]\times\{N\}$
  and $\{m_k\}\times\sS^1$.

\begin{figure}[h]
  \centering \includegraphics[width=3.5in]{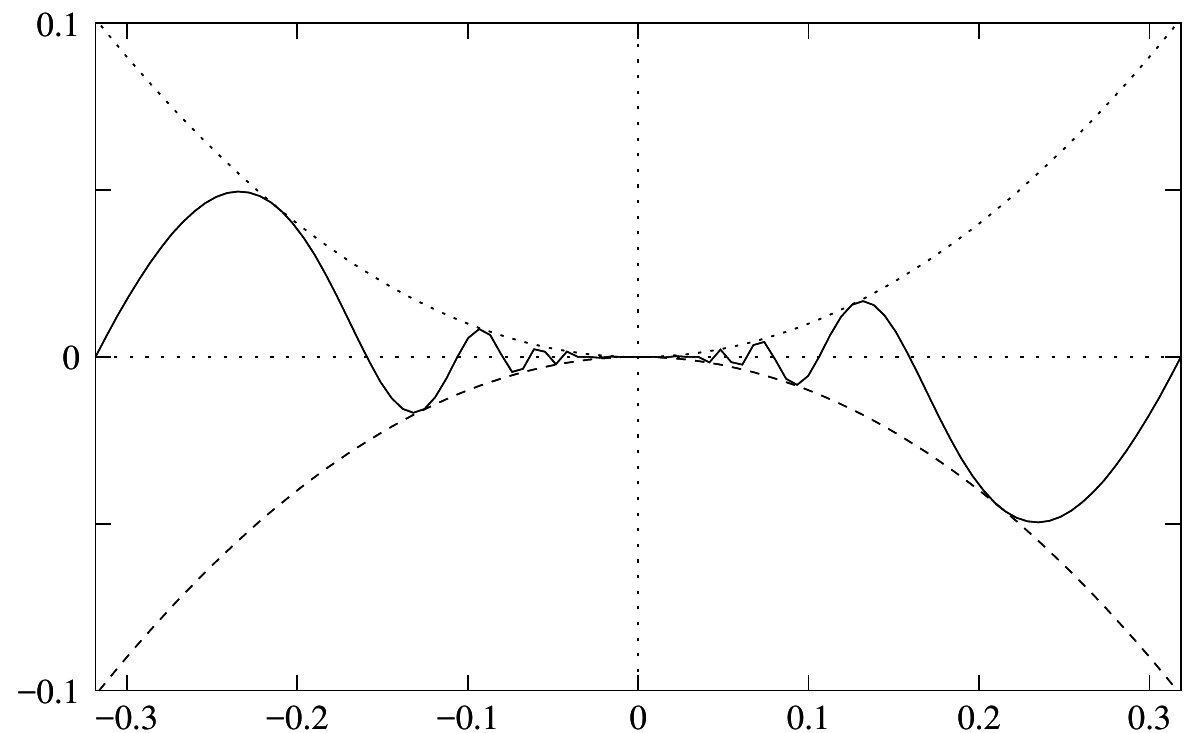}
  \caption{\label{fig:sinus} A map whose gradient flow has
    infinitely many sinks whose basins form a open, dense
    and full measure subset of the ambient space.}
\end{figure}
\end{example}

This situation is however not robust as the following
example shows.

\begin{example}\label{ex:Mora-Viana}
  There exist one parameter families
  $\vfi_\mu:\sS^2\to\sS^2, -1\le\mu\le1$ of smooth
  diffeomorphisms such that (see e.g. \cite[Chapter 5,
  Section 1 \& Chapter 7, Section 2]{PT93} and references
  therein)
  \begin{itemize}
  \item there exists a sink $S\in\sS^2$ whose basin is an
    open, dense and full Lebesgue measure subset of $\sS^2$
    for $-1\le\mu\le0$; and
  \item there are parameters $\mu_n\searrow0$ so that
    $\vfi_{\mu_n}$ admits positive Lebesgue measure subsets
    of points $x$ with a positive Lyapunov exponent (e.g., the
    ergodic basin of H\'enon-like attractors near the
    generic unfolding of a quadratic homoclinic tangency).
  \end{itemize}
\end{example}

% \subsubsection{Continuous maps on compact metric spaces}
% \label{sec:contin-maps-compact}

% We prove Theorem~\ref{mthm:negativexp-sink} through a more
% general result applicable to continuous maps on compact
% metric spaces.

% Given a transformation $f:X\to X$ of a metric space $X$, define
% $$\DD^+f(p)=\limsup_{x\to p}\frac{d(f(x),f(p))}{d(x,p)}.$$
% This allows us to write the condition of non-uniform
% contraction in the statement of
% Theorem~\ref{mthm:negativexp-sink} in metric spaces.

% We say that a periodic orbit $p, fp,\dots, f^kp=p$ with
% period $k\ge1$ of $f$ is attracting (or a sink), if there
% exists $\lambda\in(0,1)$ and a neighborhood $V$ of each $p$
% such that $f^k( V)\subset V$ and $f^k\mid_V:V\to V$ is a
% $\lambda$-contraction.

% \begin{theorem}
%   \label{thm:nuc-metric}
%   Given a continuous map $f:X\to X$ on a compact metric
%   space $X$, if for a given $x\in X$ we have
%   \begin{align*}
%     \liminf_{n\to\infty}
%     \frac1n\sum_{j=0}^{n-1}\ln \DD^{-}f(f^j(x))<0,
%   \end{align*}
%   then $x$ belongs to the basin of a periodic attracting
%   orbit.
% \end{theorem}

% And since this is clearly also a necessary condition, we deduce
% \begin{corollary}
%   \label{cor:nuc-metric}
%   If a continuous map $f:X\to X$ on a compact metric space
%   $X$ admits no attracting periodic orbits, then
%     \begin{align*}
%       \liminf_{n\to\infty}
%       \frac1n\sum_{j=0}^{n-1}\ln \DD^{-}f(f^j(x))\ge0,
%       \quad\text{for all}\quad
%       x\in X.
%   \end{align*}
% \end{corollary}

\subsubsection{The singular $C^1$ vector field setting}
\label{sec:vector-field-setting}

The pointwise statements of the continuous version of the results take
advantage of the existence of infinitesimal generators of the cocycles
$\ln\|D\phi_t(x)v\|$ and $\ln\|P_x^tv\|$ to avoid assumptions on time
averages; see Section~\ref{sec:linear-poincare-flow-1}.
The cocycle
relation for the derivative of the flow of $G$ and for the Linear
Poincar\'e Flow $P_G$ implies that the functions
\begin{align*}
  \Gamma(v)=\Gamma_G(v)&:\RR\times M\to\RR, \quad (t,x)\mapsto
                   \Gamma_t(x)v=\ln\|D\phi_t(x)v\|;
  \\
  \tilde\Gamma(v)=\tilde\Gamma_G(v)
                 &:\RR\times M\to\RR, \quad (t,x)\mapsto
                   \tilde\Gamma_t(x)v=\ln\|D\phi_t(x)^{-1}v\|;
  \\
\psi(v)=\psi_P(v)&:\RR\times M\setminus\sing(G)\to\RR,
(t,x)\mapsto\psi_t(x)v=\ln\|P^t_xv\|
                   \qand
  \\
\tilde\psi(v)=\tilde\psi_P(v)&:\RR\times M\setminus\sing(G)\to\RR,
(t,x)\mapsto\tilde\psi_t(x)v=\ln\|(P^t_x)^{-1}v\|
\end{align*}
are \emph{additive}:
$\Gamma_{t+s}(y)v \le\Gamma_s(\phi_ty)v+\Gamma_t(y)v$ and
$\psi_{t+s}(x)v \le \psi_s(\phi_tx)v+\psi_t(x)v$ for
$y\in M, x\in M\setminus\sing(G)$ and $t,s\in\RR$; and
similarly for $\tilde\Gamma$ and $\tilde\psi$.
% \begin{align*}
%   \Gamma_{t+s}y
%     &\le
%     \Gamma_s(\phi_ty)+\Gamma_ty, \quad
%   y\in M, \qand
% \\
%   \psi_{t+s}x
%   &\le
%     \psi_s(\phi_tx)+\psi_tx, \quad
%  x\in M\setminus\sing(G).
% \end{align*}
In Section~\ref{sec:flow-case} a more detailed version of
the following is stated and proved, mainly as a consequence of the
extra smoothness gained along trajectories of the flow
generated by a $C^1$ vector field, since these trajectories
become $C^2$ curves.
\begin{theorem}
  \label{thm:additivity}
  The functions $D_G(v):=\lim_{h\to0} h^{-1}\Gamma_h(y)(v)$,
  $\tilde D_G(v):=\lim_{h\to0} h^{-1}\tilde\Gamma_h(y)v$,
  $ D(v):=\lim_{h\to0} h^{-1}\psi_h(x)v$ and
  $\tilde D(v):=\lim_{h\to0} h^{-1}\tilde\psi_h(x)v$ are continuous
  and uniformly bounded on $T_x^1M$. Moreover
  $ \Gamma_t(y)w = \int_0^tD_G(\Phi_sw)\,ds $ and
  $ \psi_t(x)v = \int_0^tD(\wh{\Phi_s}v)\,ds,$ for $t\in\RR$,
  $y\in M$, $w\in T_y^1M$ and
  $x\in M\setminus\sing(G), v\in T_x^1M\cap G^\perp$; and similarly
  for $\tilde\Gamma$ and $\tilde\psi$.
\end{theorem}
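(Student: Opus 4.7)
The plan is to differentiate each cocycle at time zero using the linear variational equation satisfied by $D\phi_t$, then bootstrap to the integral representation via the cocycle relation. Because $G\in\fX^1(M)$, the spatial derivative satisfies
\[
\frac{d}{dt}D\phi_t(y) \;=\; DG(\phi_t y)\circ D\phi_t(y),\qquad D\phi_0(y)=\mathrm{Id},
\]
so for every $w\in T_yM$ the curve $t\mapsto D\phi_t(y)w$ is $C^1$. For unit $w\in T_y^1M$, using $\tfrac{d}{dh}|_{h=0}\|u(h)\|=\langle u'(0),u(0)\rangle$ whenever $\|u(0)\|=1$, I obtain
\[
D_G(w)\;=\;\frac{d}{dh}\bigg|_{h=0}\ln\|D\phi_h(y)w\| \;=\; \langle DG(y)w,w\rangle,
\]
which is continuous on the unit tangent bundle and bounded by $\sup_{y\in M}\|DG(y)\|<\infty$ by compactness. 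The same calculation applied to $D\phi_h(y)^{-1}$, whose derivative at $h=0$ equals $-DG(y)$ (differentiate the identity $D\phi_h(y)^{-1}D\phi_h(y)=\mathrm{Id}$), yields $\tilde D_G(w)=-\langle DG(y)w,w\rangle$.

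For the Linear Poincar\'e Flow at a regular $x$ and $v\in T_x^1M\cap N_x$, I split $P_x^hv=O_{\phi_h x}\circ D\phi_h(x)v$ and apply the product rule,
\[
\frac{d}{dh}\bigg|_{h=0}P_x^h v \;=\; O_x\bigl(DG(x)v\bigr) \;+\; \Bigl(\tfrac{d}{dh}\big|_{h=0}O_{\phi_h x}\Bigr)v.
\]
Using the explicit formula $O_z w=w-\langle w,G(z)\rangle G(z)/\|G(z)\|^2$, together with $\langle v,G(x)\rangle=0$, kills one of the two terms coming from differentiating the projection and leaves the remaining contribution a scalar multiple of $G(x)$, hence orthogonal to $v$. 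Since $O_x$ is self-adjoint and $v=O_xv$, pairing with $v$ gives
\[
D(v)\;=\;\Bigl\langle v,\,\tfrac{d}{dh}\big|_{h=0}P_x^h v\Bigr\rangle \;=\; \langle v, DG(x)v\rangle,
\]
continuous on $\{(x,v):x\in M\setminus\sing(G),\,v\in T_x^1M\cap G(x)^\perp\}$ and uniformly bounded by $\|DG\|_\infty$ independently of $x$. The analogous argument applied to $(P_x^h)^{-1}$ produces $\tilde D(v)=-\langle v,DG(x)v\rangle$.

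For the integral representations I use the intrinsic cocycle identities
\[
\Gamma_{t+h}(y)w=\Gamma_t(y)w+\Gamma_h(\phi_t y)(\Phi_t w),
\qquad
\psi_{t+h}(x)v=\psi_t(x)v+\psi_h(\phi_t x)(\wh\Phi_t v),
\]
where $\Phi_t w:=D\phi_t(y)w/\|D\phi_t(y)w\|$ and $\wh\Phi_t v:=P_x^tv/\|P_x^tv\|$ are the induced unit-sphere bundle flows; these identities are immediate from multiplicativity of $D\phi$ and of $P$. Differentiating in $h$ at $h=0$ and invoking the previous paragraphs gives $\tfrac{d}{dt}\Gamma_t(y)w=D_G(\Phi_tw)$ and $\tfrac{d}{dt}\psi_t(x)v=D(\wh\Phi_tv)$, both continuous in $t$ thanks to continuity of $\Phi_t$, $\wh\Phi_t$ and of the generators; the fundamental theorem of calculus then delivers the stated formulas, and $\tilde\Gamma,\tilde\psi$ are handled identically. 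The step I expect to be most delicate is the middle one: verifying that the derivative of $O_{\phi_h x}$ produces no contribution along $v$ requires careful bookkeeping with the projection formula and is exactly where regularity of $x$ enters, but the resulting pointwise bound $|D(v)|\le\|DG\|_\infty$ extends uniformly to all of $M$, which is the feature exploited in the downstream arguments.
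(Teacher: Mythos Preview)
Your argument is correct and in fact more direct than the paper's. The paper first treats the \emph{subadditive} operator-norm cocycles $\Gamma_t(x)=\ln\|D\phi_t(x)\|$ and $\psi_t(x)=\ln\|P_x^t\|$: it invokes an abstract lemma on subadditive cocycles (Lemma~\ref{le:subdiff}) to get one-sided generators, then establishes their continuity via a spectral perturbation estimate for self-adjoint operators (Kato), and only afterwards observes that for each fixed unit vector the cocycle is actually additive so the two-sided limit exists. You bypass all of this by working from the outset with the additive per-vector cocycles, differentiating directly through the linear variational equation to obtain the closed forms $D_G(w)=\langle DG(y)w,w\rangle$ and $D(v)=\langle DG(x)v,v\rangle$; continuity and the uniform bound $\|DG\|_\infty$ are then immediate, and the integral formula follows from the cocycle relation plus the fundamental theorem of calculus. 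What your route buys is economy and an explicit formula; what the paper's detour buys is a simultaneous treatment of the operator-norm cocycles $\sup_v H(t,v)$ that feed into the downstream Pliss-type arguments, though your explicit expression recovers this just as well, since $\sup_{\|w\|=1}\langle DG_y w,w\rangle$ is the top eigenvalue of $(DG_y+DG_y^*)/2$.
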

Here $T^1M$ os the unit tangent bundle, $G^\perp$ is the normal
bundle to $G$ on $M\setminus\sing(G)$; $\Phi_t$ is the induced
flow on the unit tangent bundle
$\Phi_tv=\frac{D\phi_t(x)v}{\|D\phi_t(x)v\|}$ and $\wh{\Phi_t}$ the
analogous construction with $P_x^t$ in the place of $D\phi_t(x)$.

Hence, for instance, we can replace
$\lim_{T\to\infty}\ln\|P_x^T\|<0, \mu$-a.e $x$ for an ergodic
$G$-invariant probability measure by the condition
$\mu\big(\sup_{v\in T_x^1M\cap G^\perp} D(v)\big)<0$ and so on; see
the statement of Corollary~\ref{cor:decomposes-flow} in what follows.

\subsubsection*{On sectional Lyapunov exponents}

We can also interpret $\mu(D)<0$ as a condition on the
Lyapunov spectrum of $\mu$. The Oseledets Multiplicative
Ergodic Theorem states that Lyapunov exponents exist for the
cocycle $D\phi_t(x)$ for a total probability subset of
points: for any $G$-invariant probability measure and for
$\mu$-a.e. $x$ there exists
$k=k(x)\in\{1, \ldots, d=\dim(M)\}$, numbers
$\chi_1(x)<\cdots<\chi_k(x)$ and a $D\phi_t$-invariant
decomposition $ T_xM=E_x^1\oplus\cdots\oplus E_x^k$ (i.e.,
$ D\phi_tE^i_x=E^i_{\phi_tx}$) so that
\begin{align*}
  \chi(x,v)=\lim_{t\to\pm\infty} \log\|D\phi_t(x)\cdot v\|^{1/t} =
  \chi_i(x), \quad
  \forall v\in E^i_x\setminus\{\vec0\}, 1\le i\le k(x).
\end{align*}
Moreover, $\chi(x,G(x))=0$ for $\mu$-a.e.  $x\in
M\setminus\sing(G)$. In addition, the angles between any two Oseledets
subspaces decay sub-exponentially fast along orbits of $f$ (see
e.g.~\cite[Theorem 1.3.11 \& Remark 3.1.8]{BarPes2007}):
$ \lim_{t\to\pm\infty} \frac 1n \log\sin \angle \left(\bigoplus_{i\in
    I} E_{\phi_tx}^i, \bigoplus_{j\notin I} E_{\phi_tx}^j\right) = 0 $
for any $I \subset \{1, \ldots, k(x)\}$ and $\mu$-a.e. $x$, where for
any given pair $E,F$ of complementary subspaces (i.e.
$E\oplus F=T_xM$) we set
$\cos\angle(E,F):=\inf\left\lbrace |\langle v,w\rangle| :
  \|v\|=1=\|w\|, v\in E, w\in F\right\rbrace$.  This implies, in
particular, that for any $2$-dimensional subspace $S$ of $T_xM$ the
value of $\lim\ln|\det D\phi_t\mid S|^{1/T}$ equals the sum of the two
largest Lyapunov exponents of all basis of $S$. Since the direction of
the flow has zero Lyapunov exponent, the assumptions on
Theorems~\ref{mthm:weaknegexpFlow}, ~\ref{mthm:strongnegexpFlow}
and~\ref{mthm:posectionalexp} can be restated as:
$\liminf_{T\to\infty}\ln|\det D\phi_t\mid S|<0$ for every
two-dimensional subspace $S$ of $T_xM$; or with $\limsup$ etc. This is
why it is natural to label these conditions on a trajectory of a flow
as \emph{asymptotic sectional growth conditions} or \emph{conditions
  on Lyapunov exponents transverse to the vector field}, since
$\mu\big(\sup_{v\in T_x^1M\cap G^\perp}D\big)<0$ for an ergodic
$G$-invariant probability measure $\mu$ amounts to say that the the
Lyapunov exponents are $\mu$-a.e. equal to
$\chi_1<\dots<\chi_{k-1}<0=\chi_k$ for some $k\le d$.

\subsubsection*{Asymptotic contraction (Lebesgue almost)
  everywhere}
\label{sec:asympt-contract-lebe}

The setting of Theorems~\ref{mthm:negexpflow-sink},
~\ref{mthm:weaknegexpFlow},~\ref{mthm:strongnegexpFlow}
and~\ref{mthm:posectionalexp} is robust: on a $C^1$
neighborhood $\cU$ of $G$ in $\fX^1(M)$ we have the same
assumptions and conclusions.

If we replace $f$ by the flow generated by $G\in\fX^1(M)$
and the assumptions $\chi^-(x)<0$ or $A_k^-(x)<0$ by
$\chi_G^-(x)$ on Corollaries~\ref{cor:compactnegexpperiodic}
and~\ref{cor:Leballneg}, then we get the same conclusions in
the vector field setting.
Moreover, since the Linear Poincar\'e Flow is only defined
for regular points, we also have
\begin{corollary}
  \label{cor:compactnegsecexpperiodic}
  Let $G\in\fX^1(M)$ be such that $\sing(G)$ is hyperbolic.
  \begin{enumerate}
  \item If $\limsup_{T\to\infty}\ln\|P_x^T\|^{1/T} <0$
    for\footnote{Since $\sing(G)$ is hyperbolic, then
      $m(\sing(G))=0$ because (if non-empty) $\sing(G)$ is
      finite.}  $m$-a.e. $x\in M$, then an at most countable
    family of periodic attracting orbits or attracting
    equilibria (i.e., an at most enumerable family of
    sinks) whose basins form an open, dense and also a full
    Lebesgue measure subset of $M$.
  \item 
    Let $K\subset M$ be a compact $G$-invariant subset such
    that $\liminf_{T\to\infty}\ln\|P_x\|^{1/T}<0$ for all
    $x\in K$. Then $K$ is the union of a finite family of
    sinks.
  \end{enumerate}
\end{corollary}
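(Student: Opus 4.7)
For item (1), the plan is to apply Theorem~\ref{mthm:strongnegexpFlow} pointwise. Since $\sing(G)$ is hyperbolic and $M$ is compact, $\sing(G)$ is a finite set with $m(\sing(G))=0$, so for $m$-a.e.\ $x\in M$ the hypothesis applies at a regular point, and the theorem places $x$ inside the basin $B_i$ of some sink (an attracting equilibrium or a hyperbolic attracting periodic orbit). The basins are open and pairwise disjoint, each has positive Lebesgue measure, hence only at most countably many occur; their union is an open subset of $M$ of full Lebesgue measure, and therefore dense as well.

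For item (2), the plan is to apply Theorem~\ref{mthm:weaknegexpFlow} pointwise on $K$. The hypothesis implicitly requires $P_x^T$ to be defined for every $x\in K$, forcing $K\cap\sing(G)=\emptyset$. The codimension-one-saddle alternative of Theorem~\ref{mthm:weaknegexpFlow} is then ruled out: any accumulated saddle $\sigma$ would lie in $\omega_G(x)\subset K$, contradicting the absence of singularities in $K$. Hence every $x\in K$ lies in the basin of a sink, and by closedness and $G$-invariance of $K$ this sink itself (which coincides with $\omega_G(x)$) is contained in $K$; since $K$ has no singularities, the sink must be a hyperbolic attracting periodic orbit $\cO_G(p_i)$.

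The main obstacle is then to show that $K$ equals the union of the $\cO_G(p_i)$, not merely that $K$ is contained in the union of their basins. Suppose for contradiction that $x\in K$ lies in the basin of $\cO_G(p_i)$ but not on the orbit itself. Then $\alpha_G(x)\subset K$ is a nonempty compact $G$-invariant set; applying the previous paragraph to any $y\in\alpha_G(x)$ yields $\omega_G(y)=\cO_G(p_j)\subset\alpha_G(x)$ for some $j$, so a sequence $\phi_{-t_k}x$ (with $t_k\to\infty$) converges to a point of $\cO_G(p_j)$. Pick any trapping neighborhood $U$ of $\cO_G(p_j)$ with $\phi_t(\overline{U})\subset U$ for all $t>0$: once $\phi_{-t_k}x\in U$, trapping forces $\phi_s x\in U$ for every $s>-t_k$, and letting $k\to\infty$ gives $\phi_s x\in U$ for all $s\in\RR$. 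Intersecting over a basis of such neighborhoods places $x$ on $\cO_G(p_j)$, a contradiction. Hence $K=\bigsqcup_i\cO_G(p_i)$, and compactness of $K$ together with the disjoint open cover by the basins $B_i$ forces the family to be finite.
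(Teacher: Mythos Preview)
Your proof is correct. For item~(1) you follow the same line as the paper (whose proof of this corollary is only sketched by analogy with Corollary~\ref{cor:Leballneg}): apply Theorem~\ref{mthm:strongnegexpFlow} pointwise, observe that the resulting basins are open and pairwise disjoint, hence at most countably many, and that their union is open of full Lebesgue measure, hence dense.

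For item~(2) you take a genuinely different route from the one the paper indicates. The paper's approach (via the discrete analogue Corollary~\ref{cor:compactnegexpperiodic}) is to first cover $K$ by the relatively open sets $K_n=K\cap B_n$ and extract finiteness by compactness; each $K_n$ is then clopen in $K$, hence compact and flow-invariant, and a uniform-entry-time argument into a trapping neighborhood $V_n$ yields $K_n=\phi_t(K_n)\subset\phi_t(V_n)$ for all $t\ge0$, whence $K_n\subset\bigcap_{t\ge0}\phi_t(V_n)=\cO_G(p_n)$. You instead work point by point \emph{before} establishing finiteness: you pass to $\alpha_G(x)\subset K$, locate a sink orbit $\cO_G(p_j)$ inside it, and exploit forward-trapping along the backward orbit of $x$ to force $x$ onto that sink. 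Both arguments rest on the same trapping mechanism, but yours applies it along a single backward trajectory rather than uniformly over a compact piece; this avoids having to first show each $K_n$ is compact and to produce a uniform entry time, at the modest cost of the $\alpha$-limit detour. As a side remark, the paper's one-line justification ``$K_n\subset\bigcap_{i\ge0}f^i(B_n)=\cO(p_n)$'' is imprecise as written, since the full basin is flow-invariant and so $\bigcap_t\phi_t(B_n)=B_n$; one really needs a trapping neighborhood in place of $B_n$, and your argument sidesteps this issue entirely.
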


  There are many classes of examples of vector fields having
  an open, dense and full Lebesgue measure subset in the
  basin of attraction of a family of sinks and are
  arbitrarily $C^1$ close to a vector field having a
  positive Lebesgue measure subset of trajectories with some
  asymptotic expansion; see e.g.~\cite[Chapter
  9]{BDV2004}. We outline one of these.

\begin{example}
  \label{ex:singcycle}
  Using singular cycles, Morales~\cite{Mo96} studied the
  unfolding of a geometric Lorenz attractor when the
  singularity contained in this attractor goes through a
  saddle-node bifurcation. It is shown in~\cite{Mo96} that
  there exist one-parameter families $(G_t)_{t\in[-1,1]}$ of
  vector field in a $3$-manifold $M$ which \emph{unfold a
    Lorenz attractor directly into a Plykin attractor}. This
  means that there are $\mu\in(-1,1)$ and $\delta>0$ such
  that
  \begin{itemize}
  \item if $t\in[(\mu-\delta,\mu)$, then $G_t$ has a
    geometric Lorenz attractor.
  \item $G_\mu$ is a saddle-node Lorenz vector field. 
  \item if $t\in(\mu,\mu+\delta)$, then $G_t$ is an Axiom A
    vector field (see e.g.\cite{PT93}).
  \end{itemize}
  The vector fields $G_t$ for $t\in(\mu-\delta,\mu]$ satisfy
  $\chi_{G_t}^-(x)>0$ for a positive Lebesgue measure subset
  of points, namely the basin of attraction of the
  (saddle-node) geometric Lorenz attractor. In contrast,
  $G_t$ for $t\in(\mu,\mu+\delta)$ has finitely many
  hyperbolic attractors whose basins form an open, dense and
  full Lebesgue subset of $M$; see e.g.~\cite{Bo75}.
\end{example}

\subsubsection*{Decomposition of invariant probability
  meeasures for vector fields}
\label{sec:decomp-invari-probab}

We can obtain ergodic statements similar to
Corollaries~\ref{cor:negativexp-sink}
and~\ref{cor:decomposes}.
\begin{corollary}
  \label{cor:decomposes-flow}
  Let $\mu$ be an invariant probability measure with respect
  to a $C^1$ vector field $G\in\fX^1(M)$. Then it admits a
  decomposition\footnote{Again, some of the summands in the
    decomposition might be null.}
  $\mu=\tilde\mu+\sum_{i\ge1}\nu_i+\sum_{j\ge1}\rho_i$,
  where each $\nu_i$ (respectively, $\rho_i$) is a Dirac
  mass equidistributed on a periodic attracting
  (resp. repelling) orbit of $G$, both sums are over at most
  countably many such orbits, and $\tilde\mu$ satisfies
  $\chi_G(x)\ge0$ and
  $\tilde\chi_G(x)\ge0$ for $\tilde\mu$-a.e.  $x\in M$.

  In particular, if $\mu$ is non-atomic, then
  $\mu=\tilde\mu$ and so either $\mu$ has some zero
  exponent, or $\mu$ is a hyperbolic measure with exponents
  of different signs.
\end{corollary}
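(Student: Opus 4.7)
The approach is ergodic decomposition combined with Theorem~\ref{mthm:negexpflow-sink}, in direct parallel with the proof of Corollary~\ref{cor:decomposes} for diffeomorphisms. Decompose $\mu=\int\mu_\omega\,dP(\omega)$ into ergodic components with respect to the flow. Since $\chi_G$ and $\tilde\chi_G$ are $\phi_t$-invariant and defined on a total-probability subset (via the continuous Subadditive Ergodic Theorem), for each component there exist constants $\chi_\omega,\tilde\chi_\omega\in[-\infty,+\infty)$ with $\chi_G\equiv\chi_\omega$ and $\tilde\chi_G\equiv\tilde\chi_\omega$ $\mu_\omega$-almost everywhere by ergodicity.

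\textbf{Extracting the atomic pieces.} I classify each $\mu_\omega$ by the sign of these constants. If $\chi_\omega<0$, Theorem~\ref{mthm:negexpflow-sink} guarantees $\phi_tx\to\sigma_\omega$ for $\mu_\omega$-a.e. $x$, where $\sigma_\omega$ is a hyperbolic sink equilibrium. The basins of distinct hyperbolic sinks are open, disjoint and flow-invariant, so ergodicity of $\mu_\omega$ selects a single sink $\sigma_\omega$. Birkhoff's Ergodic Theorem applied to any continuous $h$ then yields $\int h\,d\mu_\omega=\lim_T T^{-1}\int_0^T h(\phi_tx)\,dt=h(\sigma_\omega)$, so $\mu_\omega=\delta_{\sigma_\omega}$. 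Symmetrically, $\tilde\chi_\omega<0$ forces $\mu_\omega$ to be a Dirac on a hyperbolic source equilibrium. The natural (Haar-like) invariant measures supported on hyperbolic periodic attracting (resp.~repelling) orbits are themselves ergodic components of $\mu$ by construction, and they are isolated in $M$ hence at most countable. Collecting these contributions gives the sums $\sum_i\nu_i$ and $\sum_j\rho_j$.

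\textbf{The residual $\tilde\mu$.} Set $\tilde\mu:=\mu-\sum_i\nu_i-\sum_j\rho_j$; equivalently, $\tilde\mu=\int_E\mu_\omega\,dP(\omega)$ where $E$ is the set of ergodic components not yet extracted. By the extraction, no component in $E$ can have $\chi_\omega<0$ or $\tilde\chi_\omega<0$: otherwise Theorem~\ref{mthm:negexpflow-sink} would force it to be a Dirac on a sink or source equilibrium, contradicting its presence in $E$. Fubini then gives $\chi_G\ge0$ and $\tilde\chi_G\ge0$ for $\tilde\mu$-a.e. $x$. For the final dichotomy, non-atomic $\mu$ cannot charge any equilibrium nor (in the orbital sense, as in the diffeomorphism statement) any single periodic orbit, so $\mu=\tilde\mu$; combined with the fact that the flow direction always gives exponent zero at regular points, either $\chi_G=0$ $\tilde\mu$-a.e. (a zero exponent) or the largest exponent is strictly positive and the smallest strictly negative, yielding a hyperbolic measure with exponents of mixed signs.

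\textbf{Main obstacle.} The delicate step is the clean identification of the periodic-orbit contributions to $\sum_i\nu_i$ and $\sum_j\rho_j$, since Theorem~\ref{mthm:negexpflow-sink} detects only sink/source equilibria (periodic attracting orbits satisfy $\chi_G=0=\tilde\chi_G$ because the flow direction carries a zero exponent). The extraction can nevertheless be done directly on the ergodic decomposition, because ergodic components supported on a single hyperbolic periodic orbit form a countable family with pairwise disjoint supports. Under the additional hypothesis that $\sing(G)$ is hyperbolic, the sectional analogues Theorems~\ref{mthm:strongnegexpFlow} and~\ref{mthm:posectionalexp} may be invoked to produce the same extraction from conditions on the Linear Poincar\'e Flow, giving a structurally cleaner argument. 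The remainder of the proof is a routine application of ergodic decomposition and Birkhoff's theorem.
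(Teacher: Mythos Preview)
Your approach is correct and mirrors the paper's implicit argument: the paper does not give an explicit proof of this corollary, merely stating that it is obtained ``similarly'' to Corollaries~\ref{cor:negativexp-sink} and~\ref{cor:decomposes}, whose proof (partition into $E=\{\chi<0\}$, $\tilde E=\{\tilde\chi<0\}$ and complement, then apply ergodic decomposition and the pointwise theorems) is exactly what you carry out with Theorem~\ref{mthm:negexpflow-sink} in place of Theorems~\ref{mthm:negativexp-sink} and~\ref{mthm:negexpdiffeo}.

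Your ``main obstacle'' is largely self-inflicted. Theorem~\ref{mthm:negexpflow-sink} forces the extraction only of \emph{equilibria}: any ergodic component with $\chi_\omega<0$ is a Dirac mass on a sink singularity, and any with $\tilde\chi_\omega<0$ is a Dirac mass on a source singularity. A genuinely periodic attracting (resp.\ repelling) orbit carries $\chi_G=0$ and $\tilde\chi_G>0$ (resp.\ $\chi_G>0$ and $\tilde\chi_G=0$), hence already satisfies the conditions defining $\tilde\mu$; extracting such orbits is permitted but not required for the decomposition to hold. The paper's phrase ``periodic attracting orbit'' in the statement is a direct transcription from the discrete case and, for the ``in particular'' clause to go through cleanly (non-atomic $\mu$ giving zero mass to each $\nu_i,\rho_j$), the $\nu_i,\rho_j$ should be read as Dirac masses on sink/source \emph{equilibria}. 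With that reading your argument for the final dichotomy is straightforward and your hedge ``in the orbital sense'' becomes unnecessary.
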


\subsubsection{Conjectures}
\label{sec:conjectures-1}

The pointwise statements of the continuous version of the
results took advantage of the existence of infinitesimal
generators of the cocycles $\ln\|D\phi_t(x)\|$ and
$\ln\|P_x^t\|$ to avoid assumptions on time averages, as in
the statements of Theorems~\ref{mthm:negativexp-sink} and
~\ref{mthm:negexpdiffeo}.

\begin{conjecture}\label{conj:averages}
  In the discrete setting we can argue as in the vector
  field setting to reduce asymptotic growth conditions to
  asymptotic average growth condition. That is, replacing
  the assumptions $A^-(x)<0$ or $\tilde A^-(x)<0$ by
  $\liminf_{n\to\infty}\ln\|Df^n(x)^{\pm1}\|^{1/n}<0$ in the
  statements of Theorems~\ref{mthm:negativexp-sink}
  and~\ref{mthm:negexpdiffeo}.
\end{conjecture}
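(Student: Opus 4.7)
The plan is to mimic the vector-field arguments of Theorems~\ref{mthm:weaknegexpFlow} and~\ref{mthm:posectionalexp} by suspending the discrete system, thereby inheriting the additivity of the growth cocycles provided by Theorem~\ref{thm:additivity}.

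\textbf{Diffeomorphism case.} Assuming $f:M\to M$ is a $C^1$ diffeomorphism, I would form the mapping torus $M_f=(M\times[0,1])/\sim$ with $(y,1)\sim(fy,0)$, endowed with the smooth structure that turns the suspension vector field $G=\partial_s$ into a $C^1$ vector field---which is possible precisely because $f$ is a $C^1$ diffeomorphism---so that $\sing(G)=\emptyset$ is vacuously hyperbolic. A direct computation in the universal cover $M\times\RR$ identifies the normal bundle $G^\perp$ at $(y,0)$ canonically with $T_yM$ and shows that at integer times
\[
  P^n_{(x,0)}=Df^n(x):T_xM\longrightarrow T_{f^nx}M,
\]
so $\liminf_{T\to\infty}\log\|P^T_{(x,0)}\|^{1/T}=\liminf_{n\to\infty}n^{-1}\log\|Df^n(x)\|$, and similarly for the inverse. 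Theorem~\ref{mthm:weaknegexpFlow} applied to $(x,0)$ then rules out accumulation on a codimension-one saddle singularity of $G$ (there are none), so $(x,0)$ lies in the basin of a hyperbolic periodic attracting orbit of $\phi$; the latter corresponds bijectively to a periodic sink of $f$ whose basin contains $x$. The source analogue (Theorem~\ref{mthm:negexpdiffeo} with $\tilde\chi^-_f(x)<0$) follows identically from Theorem~\ref{mthm:posectionalexp}. This settles Theorem~\ref{mthm:negexpdiffeo} and the diffeomorphism case of Theorem~\ref{mthm:negativexp-sink}.

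\textbf{Endomorphism case.} For non-invertible $C^1$ maps with $\inf\|Df\|>0$ the suspension is no longer a manifold, so a direct reduction fails. I would then attempt to proceed in two stages. First, choose a subsequence $n_k\to\infty$ realising $n_k^{-1}\log\|Df^{n_k}(x)\|\le-c<0$, and take any weak-$*$ accumulation point $\mu$ of the empirical measures $n_k^{-1}\sum_{j=0}^{n_k-1}\delta_{f^jx}$; a lower-semicontinuity argument for Kingman's subadditive cocycle $\phi_n=\log\|Df^n\|$ should yield an ergodic component $\mu_0$ with $\chi_f<0$ $\mu_0$-a.e., so that Corollary~\ref{cor:negativexp-sink} locates a periodic sink $\cO_f(p)\subset\supp\mu$. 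Second, I would try to establish a subadditive Pliss-type lemma producing a positive-density sequence of times $n_k$ with $\phi_n(x)-\phi_{n_k}(x)\le-\tfrac{c}{2}(n-n_k)$ for every $n\ge n_k$, and to use those instants to push the orbit of $x$ into the basin of $\cO_f(p)$ via shadowing near the sink.

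\textbf{Main obstacle.} The hard part will be the second stage of the endomorphism case. Subadditivity only gives $\phi_n(x)\le\phi_{n_k}(x)+\phi_{n-n_k}(f^{n_k}x)$; without invertibility this cannot be reversed to yield $\phi_{n-n_k}(f^{n_k}x)\le-\tfrac{c}{2}(n-n_k)$, yet it is precisely this pointwise contraction at $f^{n_k}x$ that the hyperbolic-times machinery requires. This is the discrete counterpart of the obstruction that forces the vector-field proofs to rely on the additivity of Theorem~\ref{thm:additivity}; finding a genuine discrete substitute---perhaps by exploiting extra regularity of $f$ along the orbit of $x$, or by passing to an inverse-limit/solenoid construction that recovers invertibility outside the manifold category and then projecting the conclusion back to $M$---appears to be the crucial new ingredient needed to close the argument.
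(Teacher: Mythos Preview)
This statement is listed in the paper as a \emph{conjecture}, not a theorem: the paper offers no proof and explicitly says that a positive answer ``would be an advance'' towards another open problem. So there is nothing to compare your proposal against; what follows is an assessment of the plan itself.

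Your suspension idea for the diffeomorphism case is natural but has a real gap. You assert that the mapping torus $M_f$ can be ``endowed with the smooth structure that turns $G=\partial_s$ into a $C^1$ vector field---which is possible precisely because $f$ is a $C^1$ diffeomorphism''. This is exactly the step that needs justification and, as stated, appears to fail. With $f$ merely $C^1$, the natural atlas on $M_f$ has $C^1$ transition maps, so $M_f$ is only a $C^1$ manifold; after upgrading to a compatible $C^\infty$ structure via Whitney, the push-forward of $\partial_s$ is in general only $C^0$ (you differentiate once through the $C^1$ change of charts). But the paper's flow theorems are stated for $G\in\fX^1(M)$ and their proofs genuinely use $DG$: Lemma~\ref{le:claimDx} bounds and proves continuity of the infinitesimal generator $D(x)$ via $\|\cO_xDG_x-\cO_yDG_y\|$, and the Pliss-for-flows argument in Section~\ref{sec:asympt-contract-alon} is applied to the \emph{integral} representation $\psi_t(x)=\int_0^tD(\wh{\Phi_s}v)\,ds$ furnished by Theorem~\ref{thm:additivity}. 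Without $G\in C^1$ this representation is unavailable, and in fact with the product metric your own computation gives $\|P^t_{(x,0)}\|=\|Df^{\lfloor t\rfloor}(x)\|$, a step function in $t$, so no continuous generator exists. In short, the suspension relocates rather than removes the core difficulty: obtaining additivity of the growth cocycle from a merely $C^1$ object. (If $f$ were $C^2$ your argument would go through cleanly; the conjecture is interesting precisely in the $C^1$ category.)

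For the endomorphism case you correctly isolate the obstruction: subadditivity gives only $\phi_n(x)\le\phi_{n_k}(x)+\phi_{n-n_k}(f^{n_k}x)$, and one cannot reverse this to get the pointwise contraction $\phi_{n-n_k}(f^{n_k}x)\le -\tfrac{c}{2}(n-n_k)$ that the reverse-hyperbolic-time machinery needs. This is exactly why the paper leaves the question open and why the author singles out the additivity in Theorem~\ref{thm:additivity} as the feature distinguishing the flow case.
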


A positive answer to this would be an advance to answer the
conjecture mentioned in
Remark~\ref{rmk:lebNUCversusLebexp}(2); see
also~\cite{pinheiro18}.

We expect the assumption that $Df$ is never the null map is
an artifact of our proof and can be bypassed.
\begin{conjecture}
  \label{conj:nondeg}
For maps the result of Theorem~\ref{mthm:negativexp-sink} is
still valid without any extra assumptions on the derivative.
\end{conjecture}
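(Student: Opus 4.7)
The plan is to reduce Conjecture~\ref{conj:nondeg} to Theorem~\ref{mthm:negativexp-sink} by a localization-and-modification argument: show that the forward orbit of $x$ eventually avoids a neighborhood of the critical set $\crit(f^k):=\{p\in M:Df^k(p)=0\}$, then modify $f$ off the orbit to produce a map satisfying $\inf_M\|Df\|>0$ and apply the established theorem.

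The ``only if'' direction is immediate and unchanged: if $x$ lies in the basin of a sink $\cO_f(p)$ of period $\tau$, continuity of $Df^\tau$ at $p$ together with $\|Df^\tau(p)\|<1$ yields $A^-_\tau(x)<0$. For ``if'', assume $A^-_k(x)<0$. The first step is a dichotomy: either the tail $\{f^n x\}_{n\ge N}$ eventually lies in $K_\delta:=\{p\in M:\|Df^k(p)\|\ge \delta\}$ for some $\delta>0$, or the orbit accumulates $\crit(f^k)$, in which case $A^-_k(x)=-\infty$. In the first case, I would select an open neighborhood $U$ of the orbit closure contained in $K_{\delta/2}$ and, via a partition of unity, construct a $C^1$ map $g$ coinciding with $f$ on $U$ while satisfying $\inf_M\|Dg\|>0$ (interpolating $f$ with a nowhere-vanishing perturbation outside $U$). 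Theorem~\ref{mthm:negativexp-sink} applied to $g$ produces a $g$-sink $\cO_g(q)\subset U$ whose $g$-basin contains $x$; since $g\equiv f$ on $U$, this orbit is also an $f$-sink and $x$ lies in its $f$-basin. The degenerate case $A^-_k(x)=-\infty$ must be treated separately: if the orbit hits $\crit(f^k)$ at some $j_0$, one translates the problem to the orbit starting at $f^{kj_0+1}(x)$, reducing to the non-degenerate setting after finitely many such passes; if it merely accumulates on $\crit(f^k)$ without hitting it, a direct compactness argument combined with the negativity of $A^-_k(x)$ should force convergence to a super-attracting periodic point.

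The main obstacle is making the dichotomy rigorous purely in the $C^1$ category. A natural attempt is to truncate the cocycle, applying Pliss's Lemma to $a_j^M:=\max(\ln\|Df^k(f^{kj}x)\|,-M)$ for large $M$, to obtain a positive-density set of $(M,c)$-hyperbolic times, and then to argue that when $A^-_k(x)$ is finite, the density of iterates $j$ with $\ln\|Df^k(f^{kj}x)\|<-M$ must vanish as $M\to\infty$. Translating this density control into a uniform lower bound $\|Df^k(f^{kj}x)\|\ge\delta$ past some $N$ --- and then applying a Stable Manifold Theorem at the hyperbolic times with uniform radii --- is delicate because, absent a H\"older modulus for $Df$, one cannot use the distortion estimates that are standard in Pesin theory. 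Extracting the needed uniform $C^1$ control from the dynamics alone is, in my view, the heart of the conjecture and the likely reason it is left open in the text.
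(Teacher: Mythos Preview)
The statement you are attempting is Conjecture~\ref{conj:nondeg}, which the paper explicitly leaves \emph{open}; there is no proof in the paper to compare against. The author only records (see Remark~\ref{rmk:Df0}) why the proof of Theorem~\ref{mthm:negativexp-sink} breaks down when $Df$ is allowed to vanish: the distortion bound underlying Lemma~\ref{le:cballs} fails because $\|Df(y_i)\|/\|Df(x_i)\|$ can blow up near $\crit(f)$.

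Your proposal is not a proof, and to your credit you identify this yourself in the final paragraph. The concrete gap is that your dichotomy is false as stated. It is perfectly possible that the orbit of $x$ accumulates on $\crit(f^k)$ while $A^-_k(x)$ remains finite: take any sequence $c_j\ge0$ with $\limsup_j c_j=+\infty$ but $n^{-1}\sum_{j<n}c_j\to c\in(0,\infty)$ (e.g.\ $c_j=c$ except along a very sparse subsequence where $c_j=\sqrt j$), and arrange $\|Df^k(f^{kj}x)\|=e^{-c_j}$. Then $A^-_k(x)=-c$ is finite and negative, yet the orbit closure meets $\crit(f^k)$, so it lies in no $K_\delta$ and your localization-and-modification step cannot even begin. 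Your treatment of the ``degenerate case $A^-_k(x)=-\infty$'' therefore does not cover the genuinely difficult regime.

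The modification idea itself is sound \emph{once} you know the forward orbit closure avoids a neighborhood of $\crit(f^k)$: the sink produced by Theorem~\ref{mthm:negativexp-sink} for $g$ lies in $\omega_g(x)=\omega_f(x)\subset U$, hence is an $f$-sink. But establishing that avoidance from $A^-_k(x)<0$ alone, with only $C^1$ regularity and no distortion control, is precisely the content of the conjecture, not a preliminary step toward it. Your truncated-Pliss outline does not bridge this: positive density of $(M,c)$-hyperbolic times for the truncated cocycle gives no uniform lower bound on $\|Df^k\|$ along the tail, and without such a bound the contracting-ball construction of Lemma~\ref{le:cballs} still collapses.
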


We should not need to use hyperbolicity assumptions on the
vector field $G$.

\begin{conjecture}\label{conj:nohyperbolic}
  Theorems~\ref{mthm:weaknegexpFlow},
  ~\ref{mthm:strongnegexpFlow} and~\ref{mthm:posectionalexp}
  hold for all vector fields $G\in\fX^1(M)$.
\end{conjecture}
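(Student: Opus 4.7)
The plan is to remove the hyperbolicity hypothesis on $\sing(G)$ in two stages: first handle the case where the $\omega$-limit $\omega(x)$ avoids $\sing(G)$, and then confront accumulated non-hyperbolic singularities via a local analysis anchored by the infinitesimal generators from Theorem~\ref{thm:additivity}. The first case should be essentially automatic: if $\omega(x)\cap\sing(G)=\emptyset$, then $P_t$ is well-defined and continuous on a compact $\phi_t$-invariant neighborhood of $\omega(x)$, and inspection of the proofs of Theorems~\ref{mthm:weaknegexpFlow},~\ref{mthm:strongnegexpFlow} and~\ref{mthm:posectionalexp} should show that the hyperbolicity of $\sing(G)$ is only invoked at those singularities actually accumulated by the orbit; hence the sink (respectively source or hyperbolic periodic) alternative follows without change.

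The substantive case is when some $\sigma\in\omega(x)\cap\sing(G)$ is non-hyperbolic. I would decompose $T_\sigma M=E^s\oplus E^c\oplus E^u$ according to the spectrum of $DG(\sigma)$. The flow direction is absent at $\sigma$, but continuity of the infinitesimal generator $D$ of Theorem~\ref{thm:additivity} on nearby $y\in M\setminus\sing(G)$, combined with the sectional contraction $\liminf_{T\to\infty}\ln\|P_x^T\|^{1/T}<0$ transported along passages of $\phi_t x$ near $\sigma$, should force $DG(\sigma)$ restricted to any direction transverse to $E^c$ to be contracting, and in particular $E^s\neq\{\vec0\}$. The genuine obstruction is $E^c$: eigenvalues of $DG(\sigma)$ on the imaginary axis need not give either contraction or expansion at the nonlinear level.

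To deal with $E^c$, I would invoke a $C^1$ center manifold $W^c_{\mathrm{loc}}(\sigma)$ and reduce to a lower-dimensional vector field $\tilde G$ on $W^c_{\mathrm{loc}}(\sigma)$. The target claim, via a Pliss-type argument applied to the induced infinitesimal cocycle $\tilde D$ along successive returns of $\phi_t x$ to a small flow-box around $\sigma$, is a dichotomy: either the sectional contraction upgrades to uniform contraction in a tubular neighborhood, forcing $\phi_t x$ into the stable set of a hyperbolic piece inside $W^c_{\mathrm{loc}}(\sigma)$ and recovering the sink alternative; or the reduced dynamics $\tilde G$ carries a saddle-like invariant set which substitutes for the codimension-one saddle of Theorem~\ref{mthm:weaknegexpFlow}. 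Symmetric arguments applied to $-G$ (using $\tilde D$) should yield the source/accumulation alternatives needed for Theorem~\ref{mthm:posectionalexp}.

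The hardest step, and the reason this remains a conjecture, is controlling the center manifold: for merely $C^1$ vector fields $W^c_{\mathrm{loc}}(\sigma)$ is only $C^1$ and in general not unique, so Hartman--Grobman-type linearizations are unavailable and the sectional rates of $P_t$ must be related directly to the nonlinear dynamics of $\tilde G$ through cocycle estimates. A tempting alternative, namely to approximate $G$ by vector fields $G_n$ with hyperbolic singularities, apply the known theorems and pass to the limit, runs into the difficulty that the sectional contraction hypothesis is attached to a specific orbit of $G$ and need not persist under $C^1$-small perturbations when a non-hyperbolic singularity is being accumulated; this route would first require establishing a robustness statement for the sectional contraction condition along individual orbits, which is not obvious.
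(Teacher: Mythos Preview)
This statement is a \emph{conjecture} in the paper, not a theorem: the paper provides no proof whatsoever, only the bare statement in the Conjectures subsection. There is therefore nothing to compare your proposal against.

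Your text is not a proof either, and you say so yourself: the center-manifold reduction you sketch requires controlling the nonlinear dynamics of a merely $C^1$ vector field on a non-unique $C^1$ center manifold, without linearization tools, and you correctly flag this as the unresolved obstruction. Your first observation---that when $\omega(x)\cap\sing(G)=\emptyset$ the paper's arguments in the away-from-equilibria case go through unchanged---is accurate and matches what the paper does in that regime. But the substantive case of a non-hyperbolic $\sigma\in\omega(x)$ remains open in your outline exactly as it does in the paper; the dichotomy you hope to extract from the reduced flow $\tilde G$ is asserted rather than established, and the Pliss-type argument you allude to would need the infinitesimal generator $\tilde D$ to be bounded and continuous up to $\sigma$, which is precisely what fails when $E^c\neq\{\vec0\}$. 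So your proposal is a reasonable research plan, not a proof, and the gap you identify is the same one that leaves the statement a conjecture.
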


From Remark~\ref{rmk:bowenexample} we conjecture that
Example~\ref{ex:Bowen} is paradigmatic.

\begin{conjecture}
  \label{conj:Bowen}
  Let $G$ be a vector field satisfying
  $\liminf_{T\to\infty}\frac1T\ln\|(P_x^T)^{-1}\|<0$ and
  also $\liminf_{T\to\infty}\frac1T\ln\|P_x^T\|<0$ for a
  open, dense and full Lebesgue measure subset of $M$. Then
  $G$ exhibits saddle connections similar to
  Example~\ref{ex:Bowen}.
  % Let $K\subset M$ be compact $G$-invariant subset such that
  % $\limsup_{T\to\infty}T^{-1}\ln\|(P_x^T)^{-1}\|<0$ for all
  % $x\in K$. Then $K$ is the union of a finite family of
  % sinks or a finite family of saddle connections similar to
  % Bowen's 
\end{conjecture}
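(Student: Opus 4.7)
The plan is to combine the two one-sided flow theorems of this paper and then promote the accumulation information they yield into actual heteroclinic orbits. Writing $U\subset M$ for the open, dense, full Lebesgue measure subset given by hypothesis, I would first fold in the assumption that $\sing(G)$ is hyperbolic (cf.\ Remark~\ref{rmk:opendense}(2)) so that Theorems~\ref{mthm:weaknegexpFlow} and~\ref{mthm:posectionalexp} are available. For each $x\in U$, the hypothesis $\liminf_{T\to\infty}\frac1T\log\|P_x^T\|<0$ forces either $x$ to lie in the basin of a sink or $\omega_G(x)$ to contain a codimension-one saddle $\sigma$; symmetrically, $\liminf_{T\to\infty}\frac1T\log\|(P_x^T)^{-1}\|<0$ forces either $x$ to lie on a hyperbolic periodic source or $\omega_G(x)$ to contain an index-one saddle $\tau$. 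A point inside the open basin of a sink has $\omega_G(x)$ equal to that sink alone and so cannot accumulate a saddle, while periodic sources form a set of zero Lebesgue measure. After removing these exceptions from $U$, a full-measure subset $U'\subset U$ remains on which $\omega_G(x)\supset\{\sigma,\tau\}$ with $\sigma$ codimension-one and $\tau$ of index one.

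From here I would exploit connectedness and invariance of $\omega$-limit sets for flows on compact connected manifolds. Invariance forces $\omega_G(x)$ to contain a branch $\gamma_\sigma$ of $W^u(\sigma)$ and a branch $\gamma_\tau$ of $W^u(\tau)$ (for orbits approaching the saddle but not lying on its stable manifold, the points just past the visit converge to the local unstable manifold). Since $\omega_G(x)$ is closed, connected, and contains both saddles and these branches, the closure of $\gamma_\sigma$ should meet $W^s(\tau)$, and an Inclination Lemma applied near $\tau$ should promote the accumulation into a genuine heteroclinic intersection $W^u(\sigma)\cap W^s(\tau)$. The symmetric argument produces the reverse connection, so the union of $\sigma,\tau$ with these orbits yields the heteroclinic skeleton $W$ of Example~\ref{ex:Bowen}. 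To recover Bowen's bilateral oscillation pattern I would additionally need to constrain the eigenvalue ratios at $\sigma,\tau$; the fact that both liminfs (not just one) are negative should force a balance between the contracting and expanding sides compatible with the Takens-type condition $\lambda_1^-\lambda_2^->\lambda_1^+\lambda_2^+$.

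The main obstacle is precisely the production of the first heteroclinic intersection: abstract connectedness of $\omega_G(x)$ plus the presence of two saddles does not by itself prevent $\omega_G(x)$ from linking them through a non-trivial recurrent piece rather than through actual connecting orbits, and the $\lambda$-lemma amplifies a pre-existing intersection but does not create one. Producing that first intersection probably requires a Pliss/hyperbolic-times shadowing argument inside tubular neighborhoods of $\sigma$ and $\tau$, in the spirit of the proofs of Theorems~\ref{mthm:weaknegexpFlow} and~\ref{mthm:posectionalexp}. A secondary obstacle is that in ambient dimension $d\ge 3$ the one-dimensional $W^u(\sigma)$ and the one-dimensional $W^s(\tau)$ have total dimension less than $d$, so the forward connection $\sigma\to\tau$ is non-generic and likely demands either an extra codimension-one hypothesis in $\fX^1(M)$ or a reinterpretation of ``similar to Example~\ref{ex:Bowen}'' as a higher-dimensional heteroclinic web built from the full $(d-1)$-dimensional invariant manifolds, along the lines suggested by Remark~\ref{rmk:bowenexample}.
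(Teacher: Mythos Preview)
The statement you address is Conjecture~\ref{conj:Bowen}: the paper lists it among the open problems in Subsection~\ref{sec:conjectures-1} and offers no proof. There is therefore no paper argument to compare against; what you have written is a research outline toward a conjecture, not a reconstruction of an existing proof.

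Your first paragraph is correct as far as it goes: combining Theorems~\ref{mthm:weaknegexpFlow} and~\ref{mthm:posectionalexp} with the incompatibility you note does force $\omega_G(x)$ to contain both a codimension-one saddle $\sigma$ and an index-one saddle $\tau$ for Lebesgue-a.e.\ $x\in U$. You also correctly identify the two genuine gaps. The first---upgrading accumulation on $\{\sigma,\tau\}$ inside a connected $\omega$-limit set to an actual heteroclinic orbit---is the heart of the problem, and nothing in the paper's toolkit (Pliss times, sectional contracting balls, Gronwall estimates near equilibria) manufactures an intersection $W^u(\sigma)\cap W^s(\tau)$ out of mere accumulation; the $\lambda$-lemma, as you say, only amplifies an intersection that already exists. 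The second gap is sharper than you indicate: for $d\ge3$ the manifold $W^u(\sigma)$ is one-dimensional and $W^s(\tau)$ is one-dimensional, so a connection $\sigma\to\tau$ is a codimension-$(d-2)$ coincidence in $\fX^1(M)$ and cannot be produced by any generic or shadowing argument. This strongly suggests that the conjecture is implicitly two-dimensional (where index~$1$ and codimension~$1$ coincide and the saddles of Example~\ref{ex:Bowen} live), or that ``similar to Example~\ref{ex:Bowen}'' must be read in the product sense of Remark~\ref{rmk:bowenexample} rather than as a literal connection between $\sigma$ and $\tau$.
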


Due to the simple character of the dynamics of sinks and
sources, we should be able to obtain similar results in the
setting of continuous flows and smooth semiflows, not
necessarily generated by vector fields.

\begin{conjecture}
  \label{conj:continuous}
  There exists an open and dense family of continuous flows
  or smooth semiflows on manifolds where an extended notion
  of sectional asymptotic expansion or contraction along
  trajectories ensures the existence of sources or sinks.
\end{conjecture}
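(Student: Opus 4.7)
The plan is to reduce the conjecture to the discrete-time theorems already proved (Theorems~\ref{mthm:negativexp-sink} and~\ref{mthm:negexpdiffeo}) via Poincaré cross-sections, while replacing the Linear Poincaré Flow $P^t_x$ by a \emph{metric surrogate} defined without recourse to the ambient derivative. Concretely, for a continuous flow $\phi_t$ on $M$ and a regular point $x$ I would fix a continuous family of small codimension-one local transversals $\Sigma_y$ varying along the orbit, together with the associated holonomy maps $h_{y,z}:\Sigma_y\to\Sigma_z$ induced by following the flow. The proposed extended notion of sectional expansion/contraction along the trajectory of $x$ is then the upper (or lower) metric dilation
\[
\Psi^-(x)=\liminf_{t\to\infty}\frac1t\log\mathrm{Lip}(h_{x,\phi_tx}),\qquad \tilde\Psi^-(x)=\liminf_{t\to\infty}\frac1t\log\mathrm{Lip}(h_{x,\phi_tx}^{-1}),
\]
where $\mathrm{Lip}$ is taken on a fixed radius shrinking with $t$ so as to capture the infinitesimal behaviour. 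In the $C^1$ case this agrees with $\liminf\frac1t\log\|P^t_x\|$ by Theorem~\ref{thm:additivity}; in the merely continuous or semiflow case it is still well-defined whenever the holonomies are locally Lipschitz, which is the robust generic situation I would single out.

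Next, I would fix a global cross-section $\Sigma$ (or a finite atlas of local sections covering a neighbourhood of the non-wandering set away from singularities) with first return map $R:\Sigma\to\Sigma$. The cocycle $\mathrm{Lip}(h_{x,\phi_tx})$ is sub-multiplicative in $t$, so $\Psi^-(x)<0$ implies $A^-_k(\pi x)<0$ for $R$ on $\Sigma$ for some $k\in\ZZ^+$, in the sense of Theorem~\ref{mthm:negativexp-sink} applied with norms replaced by the metric Lipschitz constant (which satisfies the same Pliss-type selection argument, since Pliss' Lemma is purely about averages of real sequences). One then obtains a periodic sink of $R$, which lifts to a periodic attracting orbit of $\phi_t$; the alternative conclusion involving accumulation at a codimension-one singularity as in Theorem~\ref{mthm:weaknegexpFlow} arises exactly when the orbit of $\pi x$ fails to stay on $\Sigma$ for bounded return times, forcing recurrence near $\sing$. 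The analogous argument with $\tilde\Psi^-$ produces sources.

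To obtain the open and dense family, I would restrict to flows or semiflows satisfying: (i) hyperbolic critical elements, (ii) a uniform lower bound on the speed $\|\phi_t(x)-x\|/t$ outside a neighbourhood of $\sing$ (ensuring the holonomies are well-defined and locally Lipschitz), and (iii) in the semiflow case, that each time-$t$ map is a local diffeomorphism on a dense open set. The resulting family is open by continuity of the holonomies under $C^0$ perturbation on compact sets of regular points, and dense because one can approximate any flow/semiflow by one satisfying (i)--(iii) using standard Kupka--Smale-type bumping supported away from $\sing$, as in the argument cited from \cite{PM82} justifying Remark~\ref{rmk:opendense}(2).

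The main obstacle will be step one: making the metric surrogate $\Psi^-$ behave as a genuine additive cocycle in the absence of a differentiable structure, so as to enable the Pliss-type hyperbolic-time selection at the heart of the discrete proofs. For smooth semiflows this is reasonable, since the trajectories are still $C^1$ curves and the holonomies are local diffeomorphisms, so one may plausibly recover a version of Theorem~\ref{thm:additivity} for the Lipschitz cocycle; but for merely continuous flows a subadditive cocycle is the best one can hope for, and one would then need a version of Pliss' Lemma compatible with the failure of strict additivity. A secondary technical difficulty is the notion of \emph{source} in the semiflow case: without invertibility one must interpret a source as a periodic orbit with a locally contracting inverse branch, and verifying that the conclusion of Theorem~\ref{mthm:negexpdiffeo} goes through in this weaker sense is likely to require additional hypotheses on the semiflow (e.g., finite degree on a dense open set).
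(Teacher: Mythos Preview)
The statement you are attempting is Conjecture~\ref{conj:continuous}, which the paper explicitly leaves open: it appears in the list of conjectures in Subsection~\ref{sec:conjectures-1} and no proof is offered anywhere in the text. There is therefore nothing in the paper to compare your proposal against; any argument you produce would be new.

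As for the proposal itself, it is an outline rather than a proof, and you correctly flag the central difficulty yourself: the Pliss-type selection in Lemma~\ref{le:pliss} and Theorem~\ref{thm:plissflows}, together with the forward-contracting-ball construction of Lemma~\ref{le:cballs} and Proposition~\ref{pr:tubecontraction}, all rely on a two-sided bound on the infinitesimal generator (the condition $\inf_{x}\|Df(x)\|>0$ in the discrete case, $|D|\le L$ in the flow case). Your metric surrogate $\mathrm{Lip}(h_{x,\phi_tx})$ is only subadditive and, for a merely continuous flow, need not be finite at all; even when finite there is no analogue of the lower bound that makes the bounded-distortion step in Lemma~\ref{le:cballs} go through. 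Without that step the nested-contractions argument of Subsection~\ref{sec:nested-contract-argu} collapses. A second concrete gap is the openness claim: hyperbolicity of critical elements is not a $C^0$-open condition on flows, so your conditions (i)--(iii) do not cut out an open set in any topology in which ``continuous flow'' is the ambient class; the Kupka--Smale argument you invoke from \cite{PM82} is a $C^1$ statement and does not transfer. For smooth semiflows your sketch is closer to workable, but then one is essentially back in the $C^1$ setting already covered by Theorems~\ref{mthm:weaknegexpFlow}--\ref{mthm:posectionalexp}, and the genuinely new content of the conjecture --- the continuous-flow case and the lack of a Linear Poincar\'e Flow for semiflows (cf.\ Remark~\ref{rmk:semiflow}) --- remains unaddressed.
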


\subsection{Organization of the text}
\label{sec:organization-text}

We prove
Theorems~\ref{mthm:negativexp-sink},~\ref{mthm:negexpdiffeo}
and~\ref{mthm:allnegexpfinitesinks} in
Section~\ref{sec:positive-lyapun-expo}, together with
Corollaries~\ref{cor:negativexp-sink},~\ref{cor:decomposes},~\ref{cor:compactnegexpperiodic}
and~\ref{cor:Leballneg}. We state a version of Pliss'
Lemma~\ref{le:pliss} for flows in
Subsection~\ref{sec:pliss-lemma-flows}. In
Subsection~\ref{sec:linear-poincare-flow-1} we translate the
assumptions of Theorems~\ref{mthm:negexpflow-sink},
~\ref{mthm:weaknegexpFlow} and \ref{mthm:strongnegexpFlow}
in a convenient format. Then we use these results as tools
for the proof of the first part of the statement of
Theorem~\ref{mthm:negexpflow-sink} in
Subsection~\ref{sec:asympt-contract-all} and the proof of
Theorems~\ref{mthm:weaknegexpFlow} and
\ref{mthm:strongnegexpFlow} in the remaining
Subsections~\ref{sec:asympt-contract-alon},
\ref{sec:orbit-away-from}, \ref{sec:orbit-accumul-some}
and~\ref{sec:limsup-case}. In
Subsection~\ref{sec:weak-section-expans} we prove the second
part of the statement of Theorem~\ref{mthm:negexpflow-sink}
and Theorem~\ref{mthm:posectionalexp}.  In the last
Subsection~\ref{sec:proof-pliss-lemma} we prove some
technical lemmas.

%   First in the following subsection we present preliminary
%   results already known needed for the arguments proving
%   Theorem~\ref{thm.negativexp-sink}, in
%   Section~\ref{sec:negativexp-sinks}. Then we show how to
%   obtain Theorem~\ref{t.hyp2cases} in
%   Section~\ref{sec:proofs}.

\subsection*{Acknowledgments}

This work was the result of questions posed by the students
of the PhD level course MATE51 Teoria Erg\'odica
Diferenci\'avel (Differentiable Ergodic Theory) at the
Mathematics and Statistics Institute of the Federal
University of Bahia (UFBA) at Salvador-Brazil. We thank
V. Pinheiro, P. Varandas and L. Salgado for comments and
suggestions that improved a previous version of this text,
and also the Mathematics Department at UFBA and CAPES-Brazil
for the support and basic funding of the Mathematics
Graduate Courses at MSc. and PhD. levels.  We also thank the
anonymous referees for many suggestions that helped to
improve the text.

%%%%%%%%%%%%%%%%%%%%%%%%%%%%%%%%%%%%%%%%%%%%%%%%%%%%%%%

% \subsection{Preliminary results}
% \label{sec:lemmas}

% In what follows we use the notation and definitions
% of the previous section.

% \begin{lemma}\label{l.sum>0}
% Letting $*=cs$ or $cu$, then
% $\chi^*_+ +\chi^*_- \ge 0$ on a total probability set.
% \end{lemma}
% \begin{proof}
%   Clearly $\|Df^n\mid E^*\|\cdot\|(Df^n\mid
%   E^*)^{-1}\|\ge\|Id\|=1$ for every $x\in M$ and all
%   $n\in\bN$.
% \end{proof}

% \begin{lemma}
%   \label{lem.sum<0}
%   If the splitting $TM=E^{cs}\oplus E^{cu}$ is dominated,
%   then $\chi^{cs}_+ + \chi^{cu}_- < -\log\lambda <0$ on a total
%   probability set.
% \end{lemma}

% \begin{proof}
%    Clearly $\|Df^n\mid E^{cs}\|\cdot\|(Df^n\mid
%   E^{cu})^{-1}\|\le\lambda^n$ for every $x\in M$ and all
%   $n\in\bN$ from \eqref{eq:domination}.
% \end{proof}

% These inequalities will be very useful to study
% several cases in what follows.

\section{The discrete time case}
\label{sec:positive-lyapun-expo}

Here we prove
Theorems~\ref{mthm:negativexp-sink},~\ref{mthm:negexpdiffeo}
and~\ref{mthm:allnegexpfinitesinks} together with their
corollaries.

\begin{proof}[Proof of Theorem~\ref{mthm:negativexp-sink}]
  Exchanging $f$ by $f^k$ in what follows we may assume
  without loss of generality that $k=1$.  By assumption, we
  have $\zeta>0$ and a strictly increasing sequence
  $m_i\nearrow\infty$ so that
  $ \frac1{m_i}\sum_{j=0}^{m_i-1}\ln\|Df(f^{j}x)\|<-\zeta$
  as $i\nearrow\infty$.  We can now use the following.

  \begin{lemma}[Pliss Lemma; see e.g. Chapter IV.11
    in~\cite{Man87}]
    \label{le:pliss}
  Let $H\ge c_2 > c_1 >0$ and
  $\theta={(c_2-c_1)}/{(H-c_1)}$. Given real numbers
  $a_1,\ldots,a_N$ satisfying
 $
 \sum_{j=1}^N a_j \ge c_2 N \quad\mbox{and } a_j\le H
 \;\;\mbox{for all}\;\; 1\le j\le N,
 $
 there are $\ell>\theta N$ and $1<n_1<\ldots<n_\ell\le N$
 such that
$
\sum_{j=n+1}^{n_i} a_j \ge c_1\cdot(n_i-n) \;\;\mbox{for
  each}\;\; 0\le n < n_i, \; i=1,\ldots,\ell.
$
\end{lemma}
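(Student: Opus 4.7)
The natural setup is to pass to the centered partial sums: set $b_j = a_j - c_1$, $T_0 = 0$, and $T_n = \sum_{j=1}^{n} b_j$ for $1 \le n \le N$. The hypotheses $\sum_j a_j \ge c_2 N$ and $a_j \le H$ translate into $T_N \ge (c_2-c_1)N$ and $b_j \le H - c_1$, while the target inequality $\sum_{j=n+1}^{n_i} a_j \ge c_1(n_i - n)$ for every $0 \le n < n_i$ is exactly $T_{n_i} \ge T_n$ for all $0 \le n < n_i$, i.e.\ $n_i$ is a \emph{record time} of the walk $(T_n)$. So the lemma reduces to establishing the lower bound $\ell > \theta N$ on the number of record times in $\{1, \ldots, N\}$.

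The plan is then a telescoping argument driven by the running maximum $M_n := \max_{0 \le k \le n} T_k$. This sequence is non-decreasing with $M_0 = 0$, and strictly increases only at record times; at any record $n \in G$, the equality $M_n = T_n$ combined with $M_{n-1} \ge T_{n-1}$ gives
\begin{equation*}
  M_n - M_{n-1} \le T_n - T_{n-1} = b_n \le H - c_1.
\end{equation*}
Summing telescopically over $n = 1, \ldots, N$, noting that only indices in $G$ contribute, and using $T_N \le M_N$, I obtain
\begin{equation*}
  (c_2 - c_1)\, N \;\le\; T_N \;\le\; M_N \;=\; \sum_{n \in G}(M_n - M_{n-1}) \;\le\; \ell\,(H - c_1),
\end{equation*}
whence $\ell \ge \theta N$ follows immediately.

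The one remaining point is upgrading this bound to the strict inequality $\ell > \theta N$ required by the statement. When $\theta N \notin \ZZ$ this is automatic from integrality of $\ell$; in the borderline case, equality throughout the display above would force every record-time increment $b_n$ to equal $H - c_1$ exactly and would leave no room for $T$ to drop between records, a rigid configuration that a small bookkeeping argument (or an infinitesimal adjustment of $c_1$) rules out. I do not expect any genuine obstacle beyond tracking these equality cases: the substantive content is the telescoping identity driven by the running maximum, and that step is clean and self-contained.
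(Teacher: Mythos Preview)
The paper does not supply its own proof of this lemma: it is quoted from Ma\~n\'e's book and used as a black box in the proof of Theorem~\ref{mthm:negativexp-sink}. (The only Pliss-type statement actually proved in the paper is the continuous-time analogue, Theorem~\ref{thm:plissflows}; that proof likewise tracks record maxima of the centered function $G(s)=H(s)-(c+\epsilon)s$ and bounds the drop between consecutive ones via a mean-value estimate, so the underlying mechanism is the same as yours, just transcribed to the differentiable setting.)

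Your argument is the standard one and is correct. Centering by $c_1$, identifying the desired indices with the record times of the partial-sum walk $(T_n)$, and telescoping the running maximum $M_n$ gives $(c_2-c_1)N\le T_N\le M_N\le\ell(H-c_1)$, hence $\ell\ge\theta N$.

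One caveat on your last paragraph. The upgrade to a \emph{strict} inequality goes through whenever $H>c_2$, but the statement as printed allows $H=c_2$. In that borderline case the hypotheses force $a_j=c_2$ for every $j$; then every index is a record, $\ell=N$, and $\theta=1$, so $\ell>\theta N$ fails outright. This is a harmless imprecision in the stated lemma (most references, including the one cited, give $\ell\ge\theta N$) rather than a defect in your reasoning, and in every application the paper makes one has $H>c_2$ strictly.
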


We set $c_2=-\zeta$, $c_1=c_2/2$,
$H=-\ln\inf_{x\in M}\|Df(x)\|$ and
$a_{j}=-\ln\|Df(f^{m_i-j}x)\|$ for $1< j\le m_i$. Notice
that we are inverting the summation order.

\medskip

Then for $\theta=c_2/(2H-c_2)>0$ and  $N=m_i$
Pliss Lemma ensures that there are $\ell>\theta N$ and
$1<n_1<\dots<n_\ell\le m_i$ such that for each $0\le n < n_k$
and $k=1,\ldots,\ell$
\begin{align*}
  \prod_{j=n+1}^{n_k}\|Df(f^{ m_i-j}x)\|\le e^{-c_1(n_k-n)}.  
\end{align*}
The iterates $m_i-n_k$ are \emph{reverse hyperbolic times}
for the $f$-orbit of $x$ with respect to $m_i$; similar
times were used in \cite{Man88} by Ma\~n\'e and by Liao
in~\cite{Li80}.  Pliss' Lemma ensures that there are
infinitely many reverse hyperbolic times $n_i$ along the
$f$-orbit of $x$ with respect to $m_i$ and, because
$\theta>0$, we can assume that $(m_i-n_i)\nearrow\infty$.
Consequently, if $h$ is a reverse hyperbolic time with
respect to $m_i$, then $ \|Df^{j}(f^hx)\|\le\lambda^{j} $
with $\lambda=e^{-\zeta/2}$ for all $j=1,\dots,m_i-h$.  This
uniform contractive property can be extended to a
neighborhood using the fact that $f$ is \emph{a $C^1$ map
  such that $Df$ is never the null transformation}, as
follows.

\begin{lemma}[Existence of forward contracting balls]
  \label{le:cballs}
  There exist $\de_1>0$ (depending only on $f$ and $\lambda$)
  and $\lambda_1=\sqrt\lambda\in(0,1)$ such that if $n$ is a
  reverse hyperbolic time for $x\in M$ with respect to
  $m>n$, then for every $0< j\le m-n$ there are subsets
  $V_{n+j}$ containing $f^{n+j}(x)$ such that
  $V_n=B(f^{n}(x),\de_1)$; $f^j(V_n)\subset V_{n+j}$, and
  $f^j\mid_{V_n}:V_n\to V_{n+j}$ is a
  $\lambda_1^j$-contraction.
\end{lemma}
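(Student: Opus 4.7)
The plan is to use a standard ``hyperbolic times give contracting neighborhoods'' argument based on the uniform continuity of $Df$, combined with a double induction that handles the usual self-referential issue (to bound the derivative along iterates we need iterates to stay in a neighborhood of the orbit, and to know the iterates stay there we need the derivative to contract).

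First, I would extract from the hypothesis the following consequence of the Pliss estimate that is actually used: if $n$ is a reverse hyperbolic time for $x$ with respect to $m$, then
\begin{align*}
  \prod_{k=0}^{j-1}\|Df(f^{n+k}x)\|\le\lambda^j, \qquad 1\le j\le m-n.
\end{align*}
Next, using that $f\in C^1$ on the compact manifold $M$ and that $c:=\inf_{y\in M}\|Df(y)\|>0$ (a standing hypothesis of Theorem~\ref{mthm:negativexp-sink}), the continuous function $y\mapsto\|Df(y)\|$ is bounded below by $c$ and uniformly continuous. Hence there exists $\rho>0$, depending only on $f$ and $\lambda$, such that
\begin{align*}
  d(y,z)<\rho \quad\Longrightarrow\quad \|Df(y)\|\le \lambda^{-1/2}\,\|Df(z)\|.
\end{align*}
Fix $\delta_1>0$ smaller than $\rho$ and smaller than the (positive) convexity radius of $(M,\langle\cdot,\cdot\rangle)$, so that balls of radius $\delta_1$ are geodesically convex; this $\delta_1$ depends only on $f$ and $\lambda$. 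Set $V_n:=B(f^n(x),\delta_1)$ and $V_{n+j}:=B(f^{n+j}(x),\lambda_1^j\delta_1)$ with $\lambda_1=\sqrt{\lambda}$.

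Then I would prove by induction on $j\in\{0,1,\dots,m-n\}$ the pair of statements
\begin{itemize}
\item[(a)] $f^j(V_n)\subset V_{n+j}$;
\item[(b)] $f^j|_{V_n}$ is $\lambda_1^j$-Lipschitz.
\end{itemize}
The base $j=0$ is trivial. Assuming both (a) and (b) for every $k\le j$, every $w\in V_n$ satisfies $f^k(w)\in V_{n+k}\subset B(f^{n+k}(x),\delta_1)\subset B(f^{n+k}(x),\rho)$ for $0\le k\le j$, so the uniform continuity estimate gives $\|Df(f^k w)\|\le \lambda^{-1/2}\|Df(f^{n+k}x)\|$. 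Multiplying over $k=0,\dots,j$ and using the reverse hyperbolic time estimate above yields
\begin{align*}
  \|Df^{j+1}(w)\| \le \prod_{k=0}^{j}\|Df(f^k w)\| \le \lambda^{-(j+1)/2}\cdot\lambda^{j+1} = \lambda_1^{j+1}.
\end{align*}
Since $V_n$ is geodesically convex, any two points $y,z\in V_n$ are joined by a geodesic $\gamma\subset V_n$ of length $d(y,z)$, and integrating the previous bound along $\gamma$ gives (b) at step $j+1$; then (a) at step $j+1$ follows from (b) applied to $w\in V_n$ and the basepoint $f^n(x)$.

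The main obstacle, as noted above, is the self-referential dependence between the derivative control and the geometric control of iterates; this is resolved precisely by running the two assertions in tandem. Secondary routine issues are verifying that $\rho$ (and hence $\delta_1$) depends only on $f$ and $\lambda$, which I would address by noting that the modulus of uniform continuity of $\|Df(\cdot)\|$ on $M$ depends only on $f$, the lower bound $c$ depends only on $f$, and the target ratio $\lambda^{-1/2}$ depends only on $\lambda$.
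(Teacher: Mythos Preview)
Your proposal is correct and follows essentially the same approach as the paper: both arguments choose $\delta_1$ using the uniform continuity of $\|Df(\cdot)\|$ together with the lower bound $\inf\|Df\|>0$ to control the ratio $\|Df(y)\|/\|Df(z)\|$ by $\lambda^{-1/2}$, and then run an induction combining the chain-rule bound $\|Df^{j+1}(w)\|\le\prod_k\|Df(f^kw)\|$ with the reverse-hyperbolic-time product estimate. Your version is slightly more explicit about the geodesic convexity of $V_n$ and about running the containment and Lipschitz statements in tandem, while the paper defines the sets $V_{n+j}$ as images rather than balls, but these are cosmetic differences.
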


\begin{proof}[Proof of Lemma~\ref{le:cballs}]
  We basically follow \cite[Lemma 5.2]{ABV00} adapting the
  same ideas to the present setting. Since
  $\inf_{x\in M}\|Df(x)\|>0$ we have that the map
  $\psi:M\times M\to\RR, (x,y)\mapsto \|Df(x)\|/\|Df(y)\|$
  is uniformly continuous. Hence, we can find $\delta_1>0$
  so that
  \begin{align*}
    \dist(x,y)<\delta_1, (x,y)\in M\times M
    \implies
    \frac{\|Df(x)\|}{\|Df(y)\|}\le\frac1{\lambda_1}.
  \end{align*}
  We write $x_j=f^jx$ for $j\ge0$. We construct the
  neighborhoods $V_{n+j}$ by induction on $j$. Note first that
  \begin{align*}
    y\in V_n \implies
    \|Df(y)\|\le\lambda_1^{-1}\|Df(x_n)\|\le
    \lambda_1^{-1}\lambda=\lambda_1.
  \end{align*}
  So for every pair
  $y,z\in V_n$ and a smooth curve
  $\gamma:[0,1]\to V_n$ connecting $\gamma(0)=z$
  to $\gamma(1)=y$ we have
  \begin{align*}
    \dist(fy,fz)\le|f\circ\gamma|=\int_0^1\|Df\circ\gamma\cdot\dot\gamma\|
    \le\lambda_1\int_0^1\|\dot\gamma\|=\lambda_1|\gamma|,
  \end{align*}
  where $|\gamma|$ denotes the length of the smooth curve
  $\gamma$. This shows that $f\mid_{V_n}$ is a
  $\lambda_1$-contraction.
  
  Now let us assume that $V_{n+i}$ is already defined for
  $0<i\le j<m-n-1$: $V_{n-i}$ is a set containing $x_{n+i}$
  and $f^i\mid_{V_n}:V_n\to V_{n+i}$ is a
  $\lambda_1^i$-contraction. We define
  $V_{n+j+1}=f^{n+j+1}V_n$ which contains $x_{n+j+1}$ and,
  since $\diam V_{n+i}\le\lambda_1^i\diam V_n<\delta_1$ for
  $i=1,\dots,j$, we can write for each $y_0\in V_n$
  \begin{align*}
    \|Df^{j+1}(y_0)\|
    \le
    \prod_{i=0}^j\|Df(y_i)\|
    =
    \prod_{i=0}^j\|Df(x_{n+i})\|\frac{\|Df(y_i)\|}{\|Df(x_{n+i})\|}
    \le
    \frac{\lambda^{j+1}}{\lambda_1^{j+1}}=\lambda_1^{j+1},
  \end{align*}
  so that $f^{j+1}\mid_{V_n}:V_n\to V_{n+j+1}$ is a
  $\lambda_1^{j+1}$-contraction.
\end{proof}

\begin{remark}
\label{rmk:Df0}
If we allow $Df(\bar x)\equiv0$ for some $\bar x$, then we
might have $\|Df(y_i)\|$ proportionally much larger than
$\|Df(x_i)\|$ with both $y_i,x_i$ close to $\bar x$ and the
larger factors in $\prod_i\|Df(y_i)\|$ may not be
compensated.
\end{remark}

%
%\begin{proof}
% Since $h$ is a $C^1$-diffeomorphism, the map $y\mapsto\|Dh(y)^{-1}\|$
% is uniformly continuous on $M$. Hence there exists $\de_1>0$ such that
% $\|Dh(y)^{-1}\|\le\frac1{\sqrt{\lambda}}\|Dh(z)^{-1}\|$ for all
% $y,z\in M$ with $\dist(y,z)< \de_1$.

% Let now $n\ge1$ be a hyperbolic time for $x\in M$. Then
% \[
% y\in B(h^n(x),\de_1) \Longrightarrow
% \|Dh(y)^{-1}\|\le\frac1{\sqrt{\lambda}}\|Dh(h^n(x))^{-1}\| \le
% \frac{\lambda}{\sqrt{\lambda}}=\lambda_1,
% \]
% and $h^{-1}$ is a $\lambda_1$-contraction on
% $h(B(h^n(x),\de_1))\supset B(h^{n+1}(x),\de_1)$. We define
% $V_n=h^{-1}(B(h^{n+1}(x),\de_1)$.

% Now, using induction on $k$, we assume that
% $V_{n-k},\dots,V_n$ have already been constructed such that
% $h^{j+1}(V_{n-j})=B(h^{n+1}(x),\de_1)$ and
% $(h^{j+1})^{-1}\mid B(h^{n+1}(x),\de_1)$ is a
% $\lambda_1^{j+1}$-contraction, $j=0,\dots, k$ for some
% $k\in\{1,\dots, n\}$. Then
% \begin{eqnarray*}
% \| Dh^{k+2}( h^{n-k-1}(y) )^{-1}\|
% &\le&
% \prod_{j=n-k-1}^n \| Dh(h^j(y))^{-1}\| 
% \\
% &\le&
% \lambda_1^{-(k+2)}\prod_{j=n-k-1}^n \|Dh(h^j(x))^{-1}\|
% \le \lambda_1^{k+2}
% \end{eqnarray*}
% whenever $y$ is such that $\dist(h^j(y),h^{n-k-1+k}(x))<\de_1,
% j=0,\dots,k+1$. Hence it is enough that $h(y)\in V_{n-k}$. Thus
% defining $V_{n-k-1}=h^{-1}(V_{n-k})$ we complete the induction.
% \end{proof}

\subsection{Nested contractions argument}
\label{sec:nested-contract-argu}

Since $M$ is compact and $x$ has infinitely many reverse
hyperbolic times $n_1<n_2<\dots$ with respect to
$m_1<m_2<\dots$ so that $(m_i-n_i)\nearrow\infty$, we obtain
an accumulation point $\bar x=\lim x_{n_{k_j}}$ and we
rewrite the subsequence as $x_{n_j}$ in what follows.  We
let $\xi\in(0,1)$ be such that $4\xi<1-\xi-\xi^2$ and assume
that $\dist(x_{n_k},\bar x)<\xi\delta_1$ for all $k\ge1$.
Then we choose iterates $n_j>n_2>n_1$ satisfying
$m_j-n_j>n_2-n_1$, $\lambda_1^{n_2-n_1}<1/2$ and
$\dist(x_{n_j},\bar x)<\xi^2\delta_1$; see
Figure~\ref{fig:acumulacao}.

\begin{figure}[htpb]
  \centering
  \includegraphics[width=7cm]{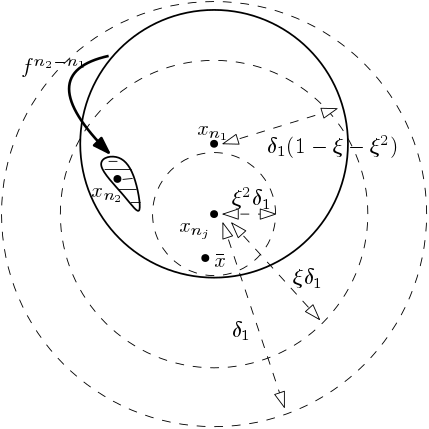}
  \caption{Relative positions of the iterates of $x$ at
    reverse hyperbolic times $n_1,n_2$ and $n_j$.}
  \label{fig:acumulacao}
\end{figure}

Then $\dist(x_{n_1},x_{n_2})<2\xi\delta_1$ and
$\dist(x_{n_1},x_{n_j})<(\xi+\xi^2)\delta_1$ and also
\begin{align*}
  B\big(x_{n_1}, \delta_1-(\xi+\xi^2)\delta_1\big)
  \subset
  B(x_{n_j},\delta_1).
\end{align*}
Moreover, since $m_j-n_j>n_2-n_1$ we can write
\begin{align*}
  f^{n_2-n_1}B(x_{n_1}, \delta_1 (1-\xi-\xi^2))
  \subset
  B(x_{n_2}, \delta_1(1-\xi-\xi^2)\lambda_1^{n_2-n_1}).
\end{align*}
We claim that 
\begin{align*}
  B(x_{n_2}, \delta_1(1-\xi-\xi^2)\lambda_1^{n_2-n_1})
  \subset
  B(x_{n_1},\delta_1(1-\xi-\xi^2)).
\end{align*}
Assuming this claim, we have the $\lambda_1^{n_2-n_1}$-contraction
\begin{align*}
  f^{n_2-n_1}\mid_{B(x_{n_1}, \delta_1 (1-\xi-\xi^2))}:
  B(x_{n_1}, \delta_1 (1-\xi-\xi^2)) \circlearrowleft
\end{align*}
and since $f$ is a continuous map, there exists a unique
fixed point $p$ for $f^{n_2-n_1}$ in this ball which is in the
basin of attraction of $p$. Since $x_{n_1}$
is in the basin of attraction of $p$, then $x_0$ belongs to
the basin of attraction of the periodic orbit $p,fp,\dots,
f^{n_2-n_1-1}p$.

To complete the proof, we prove the claim. For this it is
enough to note that
\begin{align*}
  \dist(x_{n_2},x_{n_1})+\delta_1(1-\xi-\xi^2)\lambda_1^{n_2-n_1}
  < \delta_1(1-\xi-\xi^2)
\end{align*}
if
\begin{align*}
  2\xi+(1-\xi-\xi^2)\lambda_1^{n_2-n_1}<1-\xi-\xi^2
\end{align*}
which is equivalent to
\begin{align*}
    2\xi<(1-\xi-\xi^2)(1-\lambda_1^{n_2-n_1}).
\end{align*}
This inequality is now a consequence of the choices of $\xi$
and $n_2-n_1$.
\end{proof}

\subsection{Negative Lyapunov exponents for an invariant
  probability measure}
\label{sec:negativexp-sinks}

Here we prove Corollary~\ref{cor:negativexp-sink}. Let $f$
be a $C^1$ map of $M$ and $\mu$ an $f$-invariant probability
measure satisfying $\chi(x)<0, \mu$-a.e. $x$. The
Subadditive Ergodic Theorem guarantees that
$\chi(x)=\inf_{n\ge1}\int\ln\|Df^n\|^{1/n}\,d\mu$ and so
there exists $\xi>0$ % so that
% $\inf_{n\ge1}\int\frac1n\ln\|Df^n\|\,d\mu=-\xi<0$
and we can
find $N>1$ big enough so that
$\int \ln\|Df^N\|\,d\mu<-\zeta$ for $\zeta=\xi N$.

Now we apply the following standard result.

\begin{theorem}[Ergodic Decomposition Theorem; see
  e.g. Chapter 2 in~\cite{Man87}.]
  \label{thm:ergdecomp}
  Let $f:X\to X$ be a measurable (Borelean) invertible
  transformation on the compact metric space $X$ such that
  the set of $f$-invariant probability measures $\cM(f,X)$ is
  non-empty.  Then there exists a total probability subset
  $\Sigma$ such that
\begin{itemize}
\item for every $x\in\Sigma$ the weak$^*$ limit of
  $|n|^{-1}\sum_{j=0}^{n-1}\delta_{f^j(x)}$ when $n\to\pm\infty$
  exists and equals an $f$-ergodic probability measure
  $\mu_x$;
\item for every $\mu\in\cM(f,X)$ and every $\mu$-integrable
  $\varphi:X\to\bR$, $\varphi$ is $\mu_x$-integrable for
  $\mu$-almost every $x$ and
$
        \int \varphi\, d\mu = \int \!\! \left(\int \varphi\,
          d\mu_x \right)\, d\mu(x).
$
\end{itemize}
\end{theorem}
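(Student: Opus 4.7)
The plan is to obtain the ergodic decomposition by a pointwise application of Birkhoff's Ergodic Theorem, packaging the limits of Cesàro averages into probability measures $\mu_x$ via Riesz representation, then reading off ergodicity from the construction.

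First, I would exploit separability of $C(X,\bR)$: fix a countable $\bQ$-linear family $\{\varphi_k\}_{k\ge1}$ uniformly dense in $C(X,\bR)$. For each $\mu\in\cM(f,X)$ and each $k$, Birkhoff's Ergodic Theorem produces a full-$\mu$-measure set on which $S_n\varphi_k(x):=n^{-1}\sum_{j=0}^{n-1}\varphi_k(f^jx)$ converges as $n\to\infty$, and analogously as $n\to-\infty$ using invertibility of $f$. Define $\Sigma:=\{x\in X:\lim_{n\to\pm\infty}S_n\varphi_k(x)\text{ exists for every }k\ge1\}$. A countable-intersection argument shows that $\Sigma$ is Borel, $f$-invariant, and $\mu(\Sigma)=1$ for every $\mu\in\cM(f,X)$, so $\Sigma$ is a total probability subset.

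For each $x\in\Sigma$ the assignment $\varphi_k\mapsto\lim_n S_n\varphi_k(x)$ is linear, positive, and bounded by $\|\varphi_k\|_\infty$, hence extends by uniform density to a positive, norm-one linear functional on $C(X,\bR)$. Riesz representation then provides a unique Borel probability $\mu_x$ with $\int\varphi\,d\mu_x=\lim_n S_n\varphi(x)$ for every $\varphi\in C(X,\bR)$; equivalently, $\mu_x$ is the weak-$*$ limit of the empirical measures $n^{-1}\sum_{j=0}^{n-1}\delta_{f^jx}$. The $f$-invariance of $\mu_x$ is immediate from the telescoping estimate $|S_n(\varphi\circ f)(x)-S_n\varphi(x)|\le 2\|\varphi\|_\infty/n\to0$. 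The integration formula $\int\varphi\,d\mu=\int\bigl(\int\varphi\,d\mu_x\bigr)d\mu(x)$ for continuous $\varphi$ then follows at once from Birkhoff's theorem applied to $(f,\mu)$: the $f$-invariant limit $\tilde\varphi(x):=\lim S_n\varphi(x)$ equals $\int\varphi\,d\mu_x$ by construction and integrates against $\mu$ to $\int\varphi\,d\mu$. A standard monotone-class/truncation argument extends the formula to all $\mu$-integrable $\varphi$ and simultaneously yields $\varphi\in L^1(\mu_x)$ for $\mu$-a.e.\ $x$.

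The hard part is establishing ergodicity of $\mu_x$ without circular reasoning. My plan is to note that $S_n\varphi_k\circ f-S_n\varphi_k\to0$ pointwise on $\Sigma$, so $y\mapsto\mu_y$ is $f$-invariant on $\Sigma$. Applying the integration formula just proved, but with $\mu$ replaced by $\mu_x$ itself, and invoking Fubini, forces $\mu_y=\mu_x$ for $\mu_x$-a.e.\ $y$. Equivalently, $S_n\varphi_k(y)$ converges $\mu_x$-almost everywhere to the constant $\int\varphi_k\,d\mu_x$; by density of $\{\varphi_k\}$ every $f$-invariant $L^2(\mu_x)$-function must be $\mu_x$-a.e.\ constant, which is one of the standard characterizations of ergodicity. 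Should measurability of $x\mapsto\mu_x$ cause difficulty, I would fall back on the Choquet-theoretic approach: $\cM(f,X)$ is a Choquet simplex whose extreme points are exactly the ergodic invariant measures, and the Choquet--Bishop--de Leeuw theorem furnishes the decomposition of any $\mu\in\cM(f,X)$ as a barycentre supported on these extreme points, from which the pointwise statement is recovered by disintegration.
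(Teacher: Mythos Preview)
The paper does not prove this theorem: it is quoted as a standard result, with a pointer to Chapter~2 of Ma\~n\'e's textbook, and then applied as a black box in the proof of Corollary~\ref{cor:negativexp-sink}. There is therefore no proof in the paper against which to compare your proposal.

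For what it is worth, your outline is the standard Birkhoff-plus-separability-of-$C(X)$ route and is essentially correct. The one step that is genuinely compressed is the ergodicity of $\mu_x$: the sentence ``applying the integration formula with $\mu$ replaced by $\mu_x$, and invoking Fubini, forces $\mu_y=\mu_x$ for $\mu_x$-a.e.\ $y$'' is the right target but hides real work. One must first observe that $\mu_x(\Sigma)=1$ (this holds because $\mu_x$ is $f$-invariant and $\Sigma$ has total probability), and then argue, for instance, that each invariant bounded function $g_k(y):=\int\varphi_k\,d\mu_y$ has zero variance under $\mu_x$, which is what actually yields $g_k(y)=\int\varphi_k\,d\mu_x$ for $\mu_x$-a.e.\ $y$. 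Your Choquet--Bishop--de~Leeuw fallback is also a legitimate and standard alternative.
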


By the ergodic decomposition of the $f^N$-invariant measure
$\mu$, we have
$
-\zeta>\int\ln\|Df^N\|\,d\mu=\int\int\ln\|Df^N\|\,d\mu_x\,d\mu(x)
$ and so the subset
$U=\{x\in M : \mu_x(\ln\|Df^N\|)<0\}$\footnote{In what
  follows, we write $\mu(\vfi)=\int\vfi\,d\mu$ for any
  integrable function $\vfi:M\to\RR$.}  satisfies $\mu
U>0$. Hence, since $\mu_x$ is $f^N$-ergodic, $x$ satisfies
\begin{align*}
  A^-_N(x)
  =
  \liminf_{n\to+\infty}\frac1n\sum_{j=0}^{n-1}\ln\|Df^N(f^{Nj}x)\|
  =
  \mu_x(\ln\|Df^N\|)
  <0.
\end{align*}
By Theorem~\ref{mthm:negativexp-sink} we conclude that
$\mu$-a.e. $x\in U$ belongs to the open basin of attraction
of some sink. Hence, for $\mu$-a.e. $x\in U$ we have that
$x\in\supp\mu$ and there exists an open neighborhood
$V_x$ of $x$ so that $\mu V_x>0$ and $V_x$ is contained in
the basin of attraction of some periodic attracting orbit
$p=p(x)\in M$ of $f^N$, which is also a sink for $f$.

This means that $\mu_y=\delta_{p(x)}$ for $\mu$-a.e.
$y\in V_x$. Since $X$ is a compact metric space and sinks
are isolated orbits, it follows that
$\mu=\tilde\mu+\sum_{i\ge1}\delta_{p_i}$ where $\tilde\mu$
is the restriction of $\mu$ to $M\setminus U$, which may be
null measure.

Finally, if we assume that $\mu$ is $f$-ergodic, then we
conclude that $\mu=\delta_p$ for some periodic sink $p$ for
$f$. The proof of Corollary~\ref{cor:negativexp-sink} is
complete.

\subsection{Negative Lyapunov exponents on total probability}
\label{sec:negative-lyapun-expo}

We now prove Theorem~\ref{mthm:allnegexpfinitesinks} and
Corollary~\ref{cor:Leballneg}.

\begin{proof}[Proof of Theorem~\ref{mthm:allnegexpfinitesinks}]
  We first claim that the assumption on $f$ implies
  \begin{align}\label{eq:allxsomek}
  \forall x\in M \; \exists k\in\ZZ^+: A^-_k(x)<0.
  \end{align}
  To prove the claim we argue by contradiction: let us
  assume that there exists $x\in M$
  satisfying $A^-_k(x)\ge0$ for all $k\in\ZZ^+$ and let
  $\mu$ be some weak$^*$ accumulation point of
  $\mu_n=n^{-1}\sum_{j=1}^{n-1}\delta_{f^{j}x}$. Since the
  assumptions on $f$ give
  \begin{align*}
    \inf_{k\ge1}\frac1k\int\ln\|Df^k\|\,d\mu=
    \lim_{n\to\infty}\frac1n\int\ln\|Df^n\|\,d\mu=
    \int\lim_{n\to\infty}\frac1n\ln\|Df^n\|\,d\mu<0,
  \end{align*}
  we obtain $\mu(\ln\|Df^k\|)<0$ for some $k\in\ZZ^+$.
  Because $\ln\|Df^k\|$ is continuous, by definition of
  weak$^*$ convergence we get
  \begin{align*}
    \mu(\log\|Df^k\|)=
    \lim_{n\to\infty}\frac1n\sum_{j=0}^{n-1}\ln\|Df^k(f^{kj}x)\|<0
  \end{align*}
  in direct contradiction with the choice of $x$. This
  contradiction proves the claim~\eqref{eq:allxsomek}.

  Now Theorem~\ref{mthm:negativexp-sink} ensures that all
  points $x$ belong to the basin of some attracting periodic
  orbit.

  We claim that there exists only one such orbit. Otherwise,
  let $\cO(p_n)$ be the (at most denumerable) collection of
  attracting periodic orbits of $f$ and let $B_n$ the
  collection of its basins, that is
  $ B_n=\{x\in M: \omega(x)=\cO(p_n)\}$.
  
  The continuity of $f$ guarantees that each $B_n$ is an
  open subset of $M$: if $f^mx\in V_n$ for some $m\in\ZZ^+$,
  where $V_n$ is a neighborhood of $p_n$ such that
  $\overline{f^{\tau_n}V_n}\subset V_n$; $f^{\tau_n}p_n=p_n$
  and $f^{\tau_n}\mid_{V_n}:V_n\to V_n$ is a contraction;
  then there exists a neighborhood $U_x$ so that
  $f^mU_x\subset V_n$ and so $U_x\subset B_n$. Clearly
  $(B_n)_n$ is a pairwise disjoint collection of subsets.

  From \eqref{eq:allxsomek} and
  Theorem~\ref{mthm:negativexp-sink} we have\footnote{From
    now on, the sum of sets denotes disjoint union.}
  $M=\sum_n B_n$.  Since these are distinct attracting
  periodic orbits, there exists a pair $m,n$ such that
  $\overline{B_n}\cap\overline{B_m}\neq\emptyset$. Otherwise,
  we would have $M=\sum_n \overline {B_n}$ and each $B_n$
  becomes simultaneously open and closed, contradicting the
  connectedness of $M$.

  Let us take $x\in \overline{B_n}\cap\overline{B_m}$. By
  \eqref{eq:allxsomek} and the previous continuity argument,
  a neighborhood $V_x$ of the point $x$ belongs to the basin
  of some attracting periodic orbit. Since
  $V_x\cap B_n\neq\emptyset\neq V_x\cap B_m$, we deduce that
  the attracting periodic orbits $\cO(p_n)$ and $\cO(p_m)$
  must be the same. This contradiction proves the claim and
  completes the proof of
  Theorem~\ref{mthm:allnegexpfinitesinks}.
\end{proof}

Now it is easy to prove
Corollaries~\ref{cor:compactnegexpperiodic}
and~\ref{cor:Leballneg}.

\begin{proof}[Proof of Corollary~\ref{cor:compactnegexpperiodic}]
  Using the notation introduced in the proof of
  Theorem~\ref{mthm:allnegexpfinitesinks}, we replace $M$ by
  the compact $f$-invariant subset $K$ and obtain
  $K=\sum_nK_n$, where each $K_n=K\cap B_n$ is relatively
  open in $K$. This open cover of $K$ must admit a finite
  subcover, so the number of sinks is finite. Moreover by
  $f$-invariance we get
  $K_n\subset\cap_{i\ge0}f^i(B_n)= \cO(p_n)$ where
  $\cO(p_n)$ is the periodic attracting orbit whose basin is
  $B_n$. So $K$ is a finite collection of sinks, completing
  the proof.
\end{proof}

\begin{proof}[Proof of Corollary~\ref{cor:Leballneg}]
  Using again the notation introduced in the proof of
  Theorem~\ref{mthm:allnegexpfinitesinks} we have, by
  Theorem~\ref{mthm:negativexp-sink}, that
  $M=\sum_n B_n, m\bmod0$. Moreover, each $B_n$ is an open
  subset of $M$. Hence $\sum_n B_n$ is an open, dense and
  full Lebesgue measure subset of $M$.
\end{proof}

\subsection{Positive Lyapunov exponents for a $C^1$
  diffeomorphism}
\label{sec:positive-lyapun-expo-1}

We are now ready to prove Theorem~\ref{mthm:negexpdiffeo}
and Corollary~\ref{cor:decomposes}.

\begin{proof}[Proof of Theorem~\ref{mthm:negexpdiffeo}]
  On the one hand, clearly $\tilde A^-_k(x)<0$ if $x$ belongs to a
  repelling periodic orbit with period $k\in\ZZ^+$.

  On the other hand, if $\tilde A^-_k(x)<0$ for some $k\in\ZZ^+$, then
  exchanging $f$ by $f^k$ we assume $k=1$ without loss of
  generality. We get $\zeta>0$ and a strictly increasing sequence
  $m_i\nearrow\infty$ so that
  $ \frac1{m_i}\sum_{j=0}^{m_i-1}\ln\|Df(f^{j}x)^{-1}\|<-\zeta$ as
  $i\nearrow\infty$. We apply Lemma~\ref{le:pliss} with $c_2=-\zeta$,
  $c_1=c_2/2$ and $H=-\ln\inf_{x\in M}\|Df(x)^{-1}\|$ and also
  $a_j=\ln\|Df(f^jx)^{-1}\|$ for $0\le j<m_i$, to obtain
  $\ell>\theta N$ with $\theta=c_2/(2H-c_2)>0$ and
  $1<n_1<\dots<n_\ell\le N$ so that
  $ \|Df^{n_k-n}(f^{n+1}x)^{-1}\| \le
  \prod_{j=n+1}^{n_k}\|Df(f^jx)^{-1}\|\le e^{-c_1(n_k-n)}, $ for each
  $0\le n < n_k$ and $k=1,\ldots,\ell$. Each $n_i$ is a hyperbolic
  time for $x$ and we can prove the following with
  $\lambda=e^{-c_1}=e^{-\zeta/2}$.

  \begin{lemma}[Existence of backward contracting balls]
    \label{le:backcontr}
    There exists $\delta_1>0$ (depending only on $f$ and
    $\lambda$) such that if $n$ is a hyperbolic time for
    $x$, then for every $0< j\le m-n$ there are
    neighborhoods $V_j$ of $f^jx$ in $M$ for which
    \begin{enumerate}
    \item $f^{n-j}$ maps $V_j$ diffeomorphically onto the
      ball of radius $\delta_1$ around $f^nx$;
    \item for $1\le j <n$ and $y, z\in V_0$,
      $ \dist(f^{n-j}(y),f^{n-j}(z)) \le
      \lambda^{j/2}\dist(f^{n}(y),f^{n}(z))$.
    \end{enumerate}
  \end{lemma}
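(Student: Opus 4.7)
The plan is to mirror the construction used for Lemma~\ref{le:cballs}, but now working with backward iterates of $f$ and taking advantage of the fact that $f$ is a $C^1$ diffeomorphism of compact $M$, so that $f^{-1}$ is globally defined and $C^1$, and both $\|Df(y)\|$ and $\|Df(y)^{-1}\|$ are bounded away from $0$ and $\infty$ uniformly in $y \in M$.

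First I would invoke uniform continuity of the map $y \mapsto \|Df(y)^{-1}\|$ on $M$ to select $\delta_1 > 0$, depending only on $f$ and $\lambda$, such that whenever $\dist(y,z) < \delta_1$ one has
\begin{align*}
  \frac{\|Df(y)^{-1}\|}{\|Df(z)^{-1}\|} \le \lambda^{-1/2}.
\end{align*}
The $\lambda^{-1/2}$ cushion is what allows the single-step contraction factor $\lambda$ coming from the hyperbolic time condition to be absorbed into a working backward rate of $\lambda^{1/2}$ per inverse step; the exponent $j/2$ appearing in item (2) is a direct record of this choice.

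Setting $x_i = f^i x$, I would then define $V_n = B(x_n, \delta_1)$ and inductively take $V_{j}$ to be the connected component of $f^{-1}(V_{j+1})$ containing $x_{j}$ for $j = n-1, n-2, \dots, 0$. Since $f$ is a diffeomorphism, this makes $f^{n-j}:V_j \to V_n$ a diffeomorphism by construction, yielding item (1) at once. The core of the argument is then an induction on the number of backward steps $k = n-j$ which \emph{simultaneously} establishes the operator bound $\|Df^{k}(y)^{-1}\| \le \lambda^{k/2}$ for every $y \in V_j$ and the containment $V_j \subset B(x_j, \delta_1)$. Each inductive step couples the single-step hyperbolic-time estimate $\|Df(x_{j'})^{-1}\| \le \lambda$ (extracted from the defining product inequality for hyperbolic times) with the uniform continuity comparison from the previous step applied at the nearby point in $V_{j'}$, so that the chain-rule product along the backward orbit telescopes to $\lambda^{k/2}$. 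Integrating this pointwise operator bound along smooth curves joining pairs $y,z \in V_n$, exactly as in the proof of Lemma~\ref{le:cballs}, yields item (2).

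The main delicate point, and the only real obstacle, is the bootstrap between the two parts of the induction hypothesis: the operator bound $\|Df^{k}(y)^{-1}\| \le \lambda^{k/2}$ needs all intermediate forward iterates of $y$ to lie in the $\delta_1$-ball around the corresponding point of the orbit of $x$, but this containment is itself a consequence of the operator bound one step earlier. The $\lambda^{-1/2}$ slack selected in the first step is precisely tuned so that this inductive loop closes with room to spare at each stage, delivering the claimed contraction rate $\lambda^{k/2}$ that is strictly weaker than the naive $\lambda^{k}$ but still uniform across all hyperbolic times $n$ and independent of the trajectory $x$.
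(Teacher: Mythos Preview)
Your proposal is correct and follows essentially the same route as the paper: the paper's proof simply refers to \cite[Lemma 5.2]{ABV00} and singles out exactly the uniform-continuity condition you wrote down, namely that $\dist(y,z)<\delta_1$ forces $\|Df(y)^{-1}\|/\|Df(z)^{-1}\|\le \lambda^{-1/2}$, as the only ingredient needed.

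One small correction to your sketch: there is no ``single-step hyperbolic-time estimate $\|Df(x_{j'})^{-1}\|\le\lambda$'' to extract---the defining product inequality for a hyperbolic time $n$ only controls the \emph{cumulative} products $\prod_{i=m+1}^{n}\|Df(x_i)^{-1}\|\le\lambda^{n-m}$, not individual factors. The inductive step should therefore telescope exactly as in the proof of Lemma~\ref{le:cballs}: for $y\in V_j$ with $f^{i}(y)$ already known to lie in $B(x_{j+i},\delta_1)$ for $0\le i\le n-j-1$, write
\[
\|(Df^{n-j}(y))^{-1}\|\le\prod_{i=j}^{n-1}\|Df(f^{i-j}y)^{-1}\|
=\prod_{i=j}^{n-1}\|Df(x_{i})^{-1}\|\cdot\frac{\|Df(f^{i-j}y)^{-1}\|}{\|Df(x_{i})^{-1}\|}
\le\lambda^{n-j}\cdot\lambda^{-(n-j)/2}=\lambda^{(n-j)/2},
\]
using the full product bound from the hyperbolic-time condition together with the ratio estimate at each factor. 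With this adjustment your bootstrap between the operator bound and the containment $V_j\subset B(x_j,\delta_1)$ closes exactly as you describe.
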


\begin{proof}
  Just follow \cite[Lemma 5.2]{ABV00} and notice that it is
  enough to have
  \begin{align*}
    \dist(x,y)<\delta_1, (x,y)\in M\times M
    \implies
    \frac{\|Df(x)^{-1}\|}{\|Df(y)^{-1}\|}\le\frac1{\sqrt{\lambda}}
  \end{align*}
  for this proof to go through.
\end{proof}

Hence we can repeat the nested contracting argument from
Subsection~\ref{sec:nested-contract-argu} in this setting obtaining a
periodic point $p$ for some power $f^k$ of $f$ such that
$p\in B(f^nx,\delta_1)$ for some hyperbolic time $n$ of $x$. Thus, by
Lemma~\ref{le:backcontr} and since $f$ is invertible, we get
$\dist(x, f^{-n}p)\le\delta_1\lambda^{n/2}$ and we can take $n$ larger
than any predetermined quantity. We conclude that $x$ belongs to the
$f$-orbit of $p$, concluding the proof of
Theorem~\ref{mthm:negexpdiffeo}.
\end{proof}

\begin{proof}[Proof of Corollary~\ref{cor:decomposes}]
  Consider the measurable subsets $E=\{x\in M: \chi(x)<0\}$
  and $\tilde E=\{x\in M:\tilde\chi(x)<0\}$ and note that
  $E + \tilde E + M\setminus(E+\tilde E)$ is a measurable
  partition of $M$ formed by $f$-invariant
  subsets. Moreover, if $A_k^-(x)<0$ (respectively,
  $\tilde A_k^-(x)<0$) for some $k\in\ZZ^+$, then $x\in E$
  (resp., $x\in\tilde E$). In addition, if $\mu(E)>0$, then
  by Ergodic Decomposition\footnote{We write $1_A$ for the
    indicatior function: $1_A(x)=1$ if $x\in A$ and
    $1_A(x)=0$ if $x\in M\setminus A$ for $A\subset M$.}
  $\mu(1_E\cdot\chi)=\int_E\int\chi\,d\mu_x\,d\mu(x)$ and so
  $\inf_{n\ge1}\int_E\ln\|Df^n\|^{1/n}\,d\mu_x=\mu_x(1_E\cdot\chi)<0$
  for $\mu$-a.e. $x\in E$ by the Subadditive Ergodic Theorem
  applied to the invariant subset $E$. The Ergodic Theorem
  now gives $\mu_x(1_E\cdot A_k^-)\le\mu_x(1_E\cdot\chi)<0$
  for some $k=k(x)\in\ZZ^+$ (respectively,
  $\mu_x(1_{\tilde E}\cdot\tilde A_{\tilde
    k}^-)\le\mu_x(1_{\tilde E}\cdot\tilde\chi)<0$ for some
  $\tilde k=\tilde k(x)\in\ZZ^+$ if $\mu(\tilde E)>0$).

  From Theorem~\ref{mthm:negativexp-sink} we deduce that
  $\mu$-a.e. $x\in E$ belongs to the basin of a sink
  $\cO_f(p)=\{p, fp,\dots, f^{\tau-1}p\}$ for some period
  $\tau\in\ZZ^+$, and since $\mu_x$ is ergodic and $f$ an
  invertible map, we get
  $\mu_x=\tau^{-1}\sum_{j=0}^{\tau-1}\delta_{f^jp}$ (resp.,
  by Theorem~\ref{mthm:negexpdiffeo} $\mu$-a.e.
  $x\in\tilde E$ belongs to some periodic repelling orbit
  $\cO_f(q)=\{q, fq,\dots, f^{\tau-1}q\}$ and so
  $\mu_x=\tau^{-1}\sum_{j=0}^{\tau-1}\delta_{f^jq}$).

  Finally $\chi(x)\ge0$ and $\tilde\chi(x)\ge0$ for
  $x\in M\setminus(E+\tilde E)$ by construction. Hence,
  since attracting and repelling periodic orbits are
  isolated in $M$, they form an at most enumerable subset
  and so we decompose $\mu$ as in the statement of
  Corollary~\ref{cor:decomposes}.
\end{proof}

%%%%%%%%%%%%%%%%%%%%%%%%%%%%%%%%%%%%%%%%%%%%%%%%%%

\section{The flow case}
\label{sec:flow-case}

We now prove Theorems~\ref{mthm:negexpflow-sink},
~\ref{mthm:weaknegexpFlow} and \ref{mthm:strongnegexpFlow}.
We fix $G\in\fX^1(M)$ and state a version of
Pliss' Lemma~\ref{le:pliss} for flows in the next subsection, and
then translate the assumptions of
Theorems~\ref{mthm:negexpflow-sink},
~\ref{mthm:weaknegexpFlow} and \ref{mthm:strongnegexpFlow}
in a convenient format in
Subsection~\ref{sec:linear-poincare-flow-1} to be used in
the following subsections.

\subsection{Pliss lemma for flows}
\label{sec:pliss-lemma-flows}

Following Arroyo-Hertz \cite[Theorem 3.5]{ArrHz03} we state
and prove for completeness the following version of Pliss'
Lemma for differentiable functions instead of sequences
(whose statement and proof can be found in \cite{Pl72} and
\cite{Mane83}).

\begin{theorem}
  \label{thm:plissflows}
  Given $\epsilon > 0, A,c\in\RR, c>A$, if $H : [0, T ] \to \RR$
  is differentiable, $H (0) = 0, H (T ) < cT$ and
  $c+\epsilon>\inf(H') > A$, then the set
  \begin{align*}
   \cH_\epsilon =
    \{\tau \in [0, T ] : H (s) - H (\tau ) < (c +
    \epsilon)(s - \tau ), \,\forall\tau\le s \le T\}
  \end{align*}
  has Lebesgue measure greater than $\theta T$, where
  $\theta=\epsilon/(c+\epsilon-A)$.
\end{theorem}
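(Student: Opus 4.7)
The plan is to deduce the statement from F.~Riesz's classical \emph{rising sun} lemma, which plays the role of Pliss's combinatorial bookkeeping in continuous time. First I set $\phi(t) := H(t) - (c+\epsilon)t$, so that $\phi(0) = 0$ and the hypothesis $H(T) < cT$ reads $\phi(T) < -\epsilon T$. With this normalization, $\tau \in \cH_\epsilon$ exactly when $\phi(s) < \phi(\tau)$ for every $s \in (\tau, T]$, so up to a Lebesgue-negligible boundary set $\cH_\epsilon$ equals the complement in $[0,T]$ of the open set
\[
B \;=\; \bigl\{\tau \in [0,T] : \exists\, s \in (\tau, T]\text{ with } \phi(s) > \phi(\tau)\bigr\}.
\]

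Applying the rising sun lemma to the continuous function $\phi$ decomposes $B$ as a countable disjoint union $B = \bigsqcup_k (a_k, b_k)$ with $\phi(a_k) \leq \phi(b_k)$ for every $k$ (with equality whenever $a_k > 0$). Rewriting this as $H(b_k) - H(a_k) \geq (c+\epsilon)(b_k - a_k)$ and summing yields
\[
\int_B H'(t)\,dt \;=\; \sum_k \bigl(H(b_k) - H(a_k)\bigr) \;\geq\; (c+\epsilon)\, m(B),
\]
where $m$ denotes Lebesgue measure.

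Setting $\cH := [0,T]\setminus B$, the pointwise bound $H' > A$ gives $\int_\cH H'\,dt \geq A\,m(\cH)$. Combining these estimates with the identity $H(T) = \int_\cH H'\,dt + \int_B H'\,dt$ and the hypothesis $H(T) < cT$ produces
\[
cT \;>\; A\,m(\cH) + (c+\epsilon)\bigl(T - m(\cH)\bigr),
\]
which rearranges to $(c+\epsilon - A)\,m(\cH) > \epsilon T$. Since $c+\epsilon - A > 0$, this gives $m(\cH) > \theta T$ with $\theta = \epsilon/(c+\epsilon - A)$, exactly as claimed.

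The delicate point I anticipate is the justification of $\int_{a_k}^{b_k} H'\,dt = H(b_k) - H(a_k)$ under the mere differentiability of $H$: this is automatic if $H$ is $C^1$ or locally Lipschitz (as happens in the intended applications, where $H$ arises from logarithms of norms of $C^1$ cocycle derivatives along trajectories of the flow) and can otherwise be arranged via Henstock--Kurzweil integration. The structural insight making the argument work is precisely that Riesz's lemma converts $B$ into a union of intervals on which $H$ grows at the sharp rate $c+\epsilon$, reducing the whole estimate to elementary algebra from the three given hypotheses.
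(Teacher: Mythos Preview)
Your rising-sun approach is essentially sound and takes a genuinely different route from the paper. The paper first reduces to a $C^2$ Morse function $G(s)=H(s)-(c+\epsilon)s$ by $C^1$-approximation, then explicitly constructs a finite chain $a_1<b_1\le a_2<b_2\le\cdots$ from the local maxima of $G$, proves $\bigcup_i(a_i,b_i)\subset\cH_\epsilon$, and telescopes $\sum_i(b_i-a_i)$ from below. You work dually: your rising-sun intervals $(a_k,b_k)$ are the components of the \emph{complement} $B$, and you bound $m(B)$ from above. Your version is more conceptual, sidesteps the Morse reduction, and the closing arithmetic is the same.

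There is, however, one real slip. The claim that $\cH_\epsilon$ agrees with $[0,T]\setminus B$ up to a null set is false: if $\phi$ is constant on an interval $[a,b]\subset[0,T]$ and strictly decreasing on $[b,T]$, then every $\tau\in[a,b)$ satisfies $\phi(s)\le\phi(\tau)$ for all $s>\tau$ (so $\tau\notin B$) yet $\phi(b)=\phi(\tau)$ kills the strict inequality (so $\tau\notin\cH_\epsilon$), and $[a,b)$ has positive measure. The repair is immediate: run the whole argument with some $\epsilon'<\epsilon$ (the hypothesis $\inf H'<c+\epsilon$ leaves room for this), obtain $m([0,T]\setminus B_{\epsilon'})>\theta'T$, use the honest inclusion $[0,T]\setminus B_{\epsilon'}\subset\cH_\epsilon$, and let $\epsilon'\nearrow\epsilon$. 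As a bonus, you can dispense with the FTC worry altogether: $H'>A$ and the Mean Value Theorem make $t\mapsto H(t)-At$ strictly increasing, so for the disjoint rising-sun intervals one gets directly
\[
(c+\epsilon-A)\,m(B)\ \le\ \sum_k\bigl[(H(b_k)-Ab_k)-(H(a_k)-Aa_k)\bigr]\ \le\ H(T)-AT\ <\ (c-A)T,
\]
which is your inequality without a single integral sign.
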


\begin{remark}
  \label{rmk:plissFlows}
  \begin{enumerate}
  \item This result ensures that there exists
    $\tau\in\cH_\epsilon$ such that $T-\tau>\theta T$.
  \item For given fixed $0<\eta<\bar\epsilon$, since
    $H(\tau+\eta)-H(\tau)=\int _{\tau}^{\tau+\eta}H'\ge
    A\eta$, we can write
  \begin{align*}
    H (s) - H (\tau+\eta )
    &=
      (c+\epsilon)(s-\tau)-(H(\tau+\eta)-H(\tau))
    \\
    &\le
      (c+\epsilon)(s-(\tau+\eta))+(c+\epsilon)\eta-A\eta
    \\
    &=
      \left(c+\epsilon+\eta\frac{c+\epsilon-A}{s-(\tau+\eta)}\right)
      (s-(\tau+\eta))
    \\
    &<
      (c+\hat\epsilon)(s-(\tau+\eta))
  \end{align*}
  for all $s>\tau+\eta$, where $\hat\epsilon>\epsilon$ with
  $\hat\epsilon-\epsilon$ as small as needed, if
  $\bar\epsilon$ is small enough.

  So we have $\tau+\eta\in\cH_{\hat\epsilon}$ for small
  $\eta,\hat\epsilon-\epsilon>0$ whenever
  $\tau\in\cH_\epsilon$.
\end{enumerate}

\end{remark}

We postpone the proof of Theorem~\ref{thm:plissflows} to
Section~\ref{sec:proof-pliss-lemma} and use it as a tool in
what follows.

\subsection{Linear Poincar\'e Flow and differentiability}
\label{sec:linear-poincare-flow-1}

We start the proof of Theorems~\ref{mthm:weaknegexpFlow} and
\ref{mthm:strongnegexpFlow} by expressing the assumptions in their
statements in a form suitable to apply the previous
Theorem~\ref{thm:plissflows}.
% the analogous
% to the Lemma~\ref{le:pliss} in the flow setting, which we
% state and prove next.

We fix
$G\in\fX^1(M)$ % be such that
% $\sing(G)$ is hyperbolic (possibly empty)
and let $P_G$ be the Linear Poincar\'e Flow of $G$.

The cocycle relation for the derivative of the flow and for
the Linear Poincar\'e Flow implies that the functions
\begin{align*}
  \Gamma(v)=\Gamma_G(v)&:\RR\times M\to\RR, \quad (t,x)\mapsto
                         \Gamma_tx=\ln\|D\phi_t(x)\cdot v\|, \quad v\in T^1_xM
                         \qand
  \\
  \psi(v)=\psi_P(v)&:\RR\times M\setminus\sing(G)\to\RR,
                     (t,x)\mapsto\phi_tx=\ln\|P^t_x\cdot v\|, \quad v\in T^1_xM, \langle v,G(x)\rangle=0,
\end{align*}
are \emph{subadditive}, where $T^1M$ is the unit tangent bundle: for
$t,s\in\RR$
\begin{align*}
  \Gamma_{t+s}(v)y
    &\le
    \Gamma_s(v)(\phi_ty)+\Gamma_t(v)y, \quad
  y\in M, \qand
\\
  \psi_{t+s}(v)x
  &\le
    \psi_s(v)(\phi_tx)+\psi_t(v)x, \quad
 x\in M\setminus\sing(G).
\end{align*}

The following result provides a sufficient condition to
ensure the existence of the time derivative of a subadditive
cocycle over a $C^1$ vector field.

\begin{lemma}
  \label{le:subdiff}
  Let $\psi:\RR\times U\to\RR$ be a subadditive function for the flow
  of $G\in\fX^1(M)$ on the invariant subset $U$ of $M$. If $\psi_0x=0$
  for all $x\in U$, $D_+(x):=\limsup_{h\to0} \frac1h\psi_hx<\infty$
  and $D_-(x):=\liminf_{h\to0} \frac1h\psi_hx<\infty$ are continuous
  functions of $x\in U$, then
  $\partial_h
  \psi_hx\mid_{h=0\pm}=D_{\pm}(x)=\lim_{h\to0\pm}\frac1h\psi_hx$ and
  the derivative exists if $D_-(x)=D_+(x)$.
\end{lemma}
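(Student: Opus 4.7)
My plan is to use subadditivity to express $\psi_h(x)$ for small $h > 0$ as a telescoping sum, thereby bounding the difference quotient $\psi_h(x)/h$ by a Riemann sum of $D_\pm$ values evaluated along the orbit, and then to invoke the assumed continuity of $D_\pm$ and of the flow to pass to a genuine integral in the limit. This will produce the required identification of $\partial_h \psi_h x \mid_{h=0\pm}$ with $D_\pm(x)$.

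First, iterating subadditivity gives, for $h>0$ and $n\in\NN$,
\begin{align*}
\psi_h(x)\;\le\;\sum_{k=0}^{n-1}\psi_{h/n}(\phi_{kh/n}x),
\end{align*}
so $\psi_h(x)/h\le \tfrac{1}{n}\sum_{k=0}^{n-1}\psi_{h/n}(\phi_{kh/n}x)/(h/n)$. Using the defining identity $\limsup_{k\to 0}\psi_k(y)/k = D_+(y)$ together with the continuity of $D_+$ and the compactness of the orbit segment $\phi_{[0,h]}x$, I would establish a uniform estimate $\psi_{h/n}(y)/(h/n)\le D_+(y)+o(1)$ on this segment as $n\to\infty$. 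Letting $n\to\infty$ then converts the Riemann sum into $\tfrac{1}{h}\int_0^h D_+(\phi_s x)\,ds$, yielding $\psi_h(x)/h\le \tfrac{1}{h}\int_0^h D_+(\phi_s x)\,ds$. A dual argument starting from the inequality $\psi_h(x)\ge -\psi_{-h}(\phi_h x)$ (which follows from $\psi_0\equiv 0$ and subadditivity) together with the corresponding $\liminf$ identity delivers the matching lower bound $\psi_h(x)/h\ge \tfrac{1}{h}\int_0^h D_-(\phi_s x)\,ds$.

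Second, letting $h\to 0^+$ and using continuity of $s\mapsto D_\pm(\phi_s x)$ at $s=0$, both integral means $\tfrac{1}{h}\int_0^h D_\pm(\phi_s x)\,ds$ tend to $D_\pm(x)$. Combined with the definitions of $D_\pm$ as the Dini upper and lower limits at $h=0$, this squeezes $\limsup_{h\to 0^+}\psi_h(x)/h$ and $\liminf_{h\to 0^+}\psi_h(x)/h$ together, so the one-sided limit $\lim_{h\to 0^+}\psi_h(x)/h$ exists and equals $D_+(x)$. A symmetric construction applied to $h\to 0^-$ delivers the left one-sided derivative as $D_-(x)$. When $D_+(x)=D_-(x)$, the two one-sided limits coincide, and so the full derivative $\partial_h\psi_h x\mid_{h=0}$ exists.

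The main obstacle is justifying the uniform estimate $\psi_{h/n}(y)/(h/n)\le D_+(y)+o(1)$ on the compact orbit segment $\phi_{[0,h]}x$: a priori one only has the pointwise bound $\limsup_{k\to 0}\psi_k(y)/k\le D_+(y)$, with convergence rate potentially depending on $y$. The continuity of $D_\pm$ together with compactness of the orbit segment is precisely what upgrades this pointwise Dini bound to a uniform one, making the Riemann sum argument rigorous; a careful diagonal extraction or an equicontinuity-type argument is required here. The finiteness assumption $D_\pm<\infty$ also plays its role at this step by preventing degenerations of the difference quotient along the segment.
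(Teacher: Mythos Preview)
The paper does not provide a self-contained proof here; it cites \cite[Lemma~4.12]{AraPac2010} and notes that the cited argument implicitly assumes $D_-(x)=D_+(x)$.

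Your telescoping/Riemann-sum strategy is the natural one and does deliver, for each fixed $h>0$, the sandwich
\[
\frac{1}{h}\int_0^h D_-(\phi_s x)\,ds \;\le\; \frac{\psi_h(x)}{h} \;\le\; \frac{1}{h}\int_0^h D_+(\phi_s x)\,ds,
\]
which is exactly~\eqref{eq:lnLPFint} and is in fact all that the paper uses downstream. But your final ``squeeze'' step does not close. Letting $h\to 0^+$ in the sandwich gives only
\[
D_-(x)\;\le\;\liminf_{h\to 0^+}\frac{\psi_h(x)}{h}\;\le\;\limsup_{h\to 0^+}\frac{\psi_h(x)}{h}\;\le\;D_+(x).
\]
Since $D_\pm(x)$ are by hypothesis the \emph{two-sided} $\limsup/\liminf$, both outer inequalities are already trivially true (the one-sided $\limsup$ never exceeds the two-sided one, and dually for $\liminf$). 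Nothing in your argument forces $\liminf_{h\to 0^+}$ and $\limsup_{h\to 0^+}$ to coincide, so you have not established that $\lim_{h\to 0^+}\psi_h(x)/h$ exists, let alone that it equals $D_+(x)$ rather than some value strictly between $D_-(x)$ and $D_+(x)$. The same gap appears, mirror-imaged, in your treatment of $h\to 0^-$.

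When $D_+(x)=D_-(x)$ your argument does work, and as the paper's citation remark indicates, this is the hypothesis under which the referenced proof operates; it is also the only case actually needed in the sequel. For the general one-sided statement an additional mechanism is required to pin $\liminf_{h\to0^+}$ up to $D_+(x)$ (respectively $\limsup_{h\to0^-}$ down to $D_-(x)$), and your sketch does not supply one.
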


\begin{proof}
  See \cite[Lemma 4.12]{AraPac2010} and its proof, where it is
  implicitly assumed that $D_-(x)=D_+(x)$ but the existence of lateral
  limits and derivatives is addressed.
\end{proof}

Now we take advantage of the fact that both
$\Gamma_G(x)=\sup_{v\in T^1_xM}\Gamma_G(v)(x)$ and
$\psi_P(x)=\sup_{v\in T^1_x M}\psi_P(v)(x)$ are continuously generated
subadditive cocycles over a $C^1$ vector field: the following results
shows that they are bounded by additive cocycles and provides useful
continuity properties of their infinitesimal generators.

\begin{lemma}\label{le:claimDx}
  Define $D_{G\pm}(x):=\pm\limsup_{h\to0\pm} \frac{\pm1}h\Gamma_hx$ and
  $D_\pm(x):=\pm\limsup_{h\to0\pm} \frac{\pm1}h\psi_hx$ and set
  $L=\sup_{x\in M}\|DG_x\|$. Then
  \begin{enumerate}
  \item $|D_{G\pm}(y)|\le L$ for all $y\in M$ and $|D_\pm(x)|\le L$
    for all $x\in M\setminus\sing(G)$;
  \item $y\in M\mapsto D_{G\pm}(y)$ and
    $x\in M\setminus\sing(G)\mapsto D_{\pm}(x)$ are continuous
    functions: in local coordinates\footnote{More precisely
      $|D_\pm(x)-D\pm(y)|\le \dim M\cdot\|\cO_xDG_x-
      D(\exp_x)^{-1}_{\exp_x^{-1}y} \circ\cO_yDG_y\circ
      D(\exp_x)_{\exp_x^{-1}y} \|$ for $y$ in the range of
      $\exp_x$.} we have
    \begin{align*}
      |D_{G\pm}(y)-D_{G\pm}(y')|&\le \dim M\cdot \|DG_y-DG_{y'}\|
                        \qand
      \\
      |D_{\pm}(x)-D_\pm(y)|&\le\dim M\cdot\|\cO_xDG_x-\cO_yDG_y\|;
    \end{align*}
  \item in addition
    \begin{align}\label{eq:lnLPFint}
      \int_0^t\!\!\!D_{G-}(\phi_sx)\,ds
      \le
      \Gamma_t(x)
      \le
      \int_0^t\!\!\!D_{G+}(\phi_sx)\,ds
      \qand
      \int_0^t\!\!\!D_-(\phi_sx)\,ds
      \le
      \psi_t(x)
      \le
      \int_0^t\!\!\!D_+(\phi_sx)\,ds;
    \end{align}
    and both $t\mapsto\Gamma_ty$ and $t\mapsto\psi_tx$ are bounded
    above and below by additive functions for any fixed $y\in M$ and
    $x\in M\setminus\sing(G)$.
  % \item Moreover, if $p\in M\setminus\sing(G)$ is such that
  %   $\phi_tp\xrightarrow[t\nearrow\infty]{}\sigma$ where
  %   $\sigma\in\sing(G)$ is a hyperbolic codimension $1$
  %   saddle singularity, then there are $\xi,T>0$ so that
  %   $D_G(\phi_tp)\ge D(\phi_tp)>\xi$ for all $t>T$.% ; and
  % \item likewise, if  $q\in M\setminus\sing(G)$ is such that
  %   $\phi_{-t}q\xrightarrow[t\nearrow\infty]{}\sigma$ where
  %   $\sigma\in\sing(G)$ is a hyperbolic saddle singularity,
  %   then there exist $\xi,T>0$ so that $D(\phi_{-t}q)<-\xi$ for
  %   all $t>T$.
  \end{enumerate}
\end{lemma}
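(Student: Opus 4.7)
My plan is to identify the one-sided generators at $h=0$ \emph{explicitly} via the first-order expansion of the flow, reducing items (1) and (2) to elementary properties of the symmetric part of $DG(x)$ (possibly restricted to $N_x$), and then to derive (3) by combining the subadditivity of $\Gamma$ and $\psi$ with a standard Dini-derivative integration principle. For (1), the variational equation $\partial_hD\phi_h(x)=DG(\phi_hx)\,D\phi_h(x)$ gives $D\phi_h(x)=I+h\,DG(x)+o(h)$ uniformly, so for a unit $v\in T_xM$
\begin{align*}
  \ln\|D\phi_h(x)v\|=h\,\langle v,\operatorname{sym}DG(x)\,v\rangle+o(h),
\end{align*}
with error uniform in $v$. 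Taking supremum over $v$ and passing to the one-sided limits identifies $D_{G+}(x)=\lambda_{\max}(\operatorname{sym}DG(x))$ and $D_{G-}(x)=\lambda_{\min}(\operatorname{sym}DG(x))$, both bounded in modulus by $\|DG(x)\|\le L$. For the Linear Poincaré flow, $P^h_xv=\cO_{\phi_hx}\,D\phi_h(x)v$ together with the Pythagorean identity
\begin{align*}
  \|P^h_xv\|^2=\|D\phi_h(x)v\|^2-\frac{\langle D\phi_h(x)v,\,G(\phi_hx)\rangle^2}{\|G(\phi_hx)\|^2}
\end{align*}
and the hypothesis $\langle v,G(x)\rangle=0$ force the correction to be $O(h^2)$, so the same expansion holds with $v$ restricted to $N_x\cap T^1_xM$, yielding $D_\pm(x)$ as extremal eigenvalues of $\cO_x\operatorname{sym}DG(x)\cO_x$ on $N_x$, again bounded by $L$.

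Item (2) is then Weyl's inequality: extremal eigenvalues of symmetric operators are $1$-Lipschitz in operator norm, so that $|D_{G\pm}(y)-D_{G\pm}(y')|\le\|\operatorname{sym}DG_y-\operatorname{sym}DG_{y'}\|\le\|DG_y-DG_{y'}\|$ and $|D_\pm(x)-D_\pm(y)|\le\|\cO_xDG_x\cO_x-\cO_yDG_y\cO_y\|$, the latter controlled by $\|\cO_xDG_x-\cO_yDG_y\|$. The factor $\dim M$ in the stated bound absorbs the identification of tangent spaces through $\exp_x$ and the passage between operator and componentwise norms in the resulting orthonormal frame, exactly as recorded in the footnote of the statement.

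For (3) I argue with $\psi$; the $\Gamma$ case is identical. Set $\bar f(t):=\psi_tx$, which is continuous in $t$. Subadditivity yields, for $h>0$,
\begin{align*}
  \bar f(t+h)-\bar f(t)\le\psi_h(\phi_tx)\qand\bar f(t+h)-\bar f(t)\ge-\psi_{-h}(\phi_{t+h}x),
\end{align*}
the second coming from $\psi_tx\le\psi_{-h}(\phi_{t+h}x)+\psi_{t+h}x$. Dividing by $h$, passing to $\limsup$ and $\liminf$ as $h\to 0+$, and using the continuity of $D_\pm$ from~(2) to replace $\phi_{t+h}x$ by $\phi_tx$ in the limit gives the Dini sandwich
\begin{align*}
  D_-(\phi_tx)\le\liminf_{h\to 0+}\frac{\bar f(t+h)-\bar f(t)}{h}\le\limsup_{h\to 0+}\frac{\bar f(t+h)-\bar f(t)}{h}\le D_+(\phi_tx).
\end{align*}
The classical Dini integration principle (a continuous function whose upper right Dini derivative is pointwise dominated by a continuous $g$ satisfies $\bar f(b)-\bar f(a)\le\int_a^b g$, with its dual lower version) then yields~\eqref{eq:lnLPFint}, and the bounding $t\mapsto\int_0^t D_\pm(\phi_sx)\,ds$ are additive cocycles by construction.

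The step I expect to be the main obstacle is the lower bound in~(3): transferring $D_-(\phi_{t+h}x)$ to $D_-(\phi_tx)$ as $h\to 0+$ is not automatic, because $D_-$ is defined only as a $\liminf$. One must combine the continuity from~(2) with the explicit first-order expression from~(1) to upgrade the pointwise $\liminf$ to a uniform one-sided control on a small neighbourhood of $\phi_tx$, closing the inequality.
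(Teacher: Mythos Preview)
Your argument is correct and follows the same conceptual thread as the paper --- identify the one-sided generators through the symmetrized derivative of $G$, deduce continuity via eigenvalue perturbation, and obtain the integral sandwich from subadditivity --- but the execution differs in a few instructive ways. You expand $D\phi_h(x)=I+hDG(x)+o(h)$ and read off $D_{G\pm}$ and $D_\pm$ directly as the extremal eigenvalues of $\operatorname{sym}DG_x$ (restricted to $N_x$ in the Poincar\'e case), then invoke Weyl's inequality, which gives the Lipschitz bound with constant~$1$. The paper instead computes $\partial_h\langle P_x^h u,P_x^h u\rangle=2\langle \cO_{x_h}DG_{x_h}P_x^h u,P_x^h u\rangle$ and passes through a singular-vector limit before applying Kato's perturbation theorem for self-adjoint operators; the cruder extraction $|\alpha_i-\beta_i|\le d\,r(C)$ from Kato is what produces the factor $\dim M$ in the stated bound, so your remark that this factor ``absorbs'' coordinate identifications is generous --- in the paper it is simply the weaker estimate. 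For item~(3) you use the Dini integration principle where the paper writes out the Riemann-sum partition argument explicitly; these are equivalent, and your worry about transferring $D_-(\phi_{t+h}x)$ to $D_-(\phi_tx)$ is resolved exactly as you say, by the continuity established in~(2). Your route is marginally cleaner and yields a sharper constant; the paper's route has the virtue of making the derivative formula for $\|P_x^h\|^2$ explicit, which is reused elsewhere.
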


Now we translate the assumptions of
Theorems~\ref{mthm:negexpflow-sink},~\ref{mthm:strongnegexpFlow}
and~\ref{mthm:weaknegexpFlow} using these infinitesimal generators.
We define the maps
\begin{align*}
  \wh{\Phi_t}: T^1M\cap G^\perp\to T^1M\cap G^\perp, v\mapsto
  \frac{P^tv}{\|P^tv\|}
  \qand
  \Phi_t: T^1M\to T^1M, v\mapsto \frac{D\phi_tv}{\|D\phi_tv\|}.
\end{align*}
Observe that the lateral limits exist and are equal for each
  $\psi_t(v)$ and $\Gamma_t(v)$ since
  \begin{align*}
    \ln\|P_x^{s+t}\cdot v\|
    =
    \ln\|P_{\phi_sx}^t\cdot (P_x^s\cdot v)\|
    =
    \ln\frac{\|P_{\phi_sx}^t\cdot (P_x^s\cdot v)\|}{\|P_x^s\cdot v\|}
    +
    \ln\|P_x^s\cdot v\|;
  \end{align*}
  and so we can write
  $\Gamma_{s+t}(v)=\Gamma_t(\wh{\Phi_s}v)+\Gamma_s(v)$. We similarly
  obtain additivity for $\psi_t$ with respect to $\Phi_t$.
  
  We can now define the functions
  $H_G(t,v)=\int_0^tD_G(\Phi_sv)\,ds, v\in T^1M$ and
  $H(t,v)=\int_0^tD(\wh{\Phi_s}v)\,ds, v\in T^1M\cap G^\perp$, where
  $G^\perp$ is the normal bundle to $G$ on $M\setminus\sing(G)$ and
  $D_G(v), D(w)$ are the infinitesimal generators of $\Gamma_G(v)$ and
  $\psi(w)$ respectively, for $v\in T_x^1M, w\in T_x^1M\cap
  G^\perp$. We also get equalities in~\eqref{eq:lnLPFint} with these
  generators.

\begin{lemma}\label{le:equivstat}
  The assumption of Theorem~\ref{mthm:negexpflow-sink} implies
  \begin{align}
    \label{eq:negexpflow-sink}
    \liminf_{T\to\infty}\frac1T\sup_{v\in T_x^1M}H_G(T,v)<0;
  \end{align}
  and the assumptions of Theorems~\ref{mthm:strongnegexpFlow}
  and~\ref{mthm:weaknegexpFlow} respectively imply
  \begin{align}
    \label{eq:weaknegexpflowpoint}
    \liminf_{T\to\infty}\frac1T\sup_{ v\in T_x^1M\cap G^\perp}H(T,v)<0 \qand
    % \label{eq:strongnegexpflowpoint}
    \limsup_{T\to\infty}\frac1T\sup_{ v\in T_x^1M\cap G^\perp}H(T,v)<0.
  \end{align}
\end{lemma}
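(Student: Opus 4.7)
The plan is to recognize that Lemma~\ref{le:equivstat} is essentially a bookkeeping step: it translates operator-norm growth conditions into integral-of-generator conditions via the identifications already set up in Lemma~\ref{le:claimDx}. First I would observe that, by definition of the operator norm restricted to appropriate subspaces,
\begin{align*}
  \Gamma_T(x) = \ln\|D\phi_T(x)\| = \sup_{v\in T_x^1M}\Gamma_T(v)(x)
  \qand
  \psi_T(x) = \ln\|P_x^T\| = \sup_{v\in T_x^1M\cap G^\perp}\psi_T(v)(x),
\end{align*}
so the hypotheses of Theorems~\ref{mthm:negexpflow-sink}, \ref{mthm:strongnegexpFlow} and~\ref{mthm:weaknegexpFlow} can be rewritten as $\liminf_T T^{-1}\sup_v\Gamma_T(v)(x)<0$, $\limsup_T T^{-1}\sup_v\psi_T(v)(x)<0$ and $\liminf_T T^{-1}\sup_v\psi_T(v)(x)<0$ respectively. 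Thus it suffices to prove, for each fixed unit vector $v$, the pointwise equalities $\Gamma_T(v)(x)=H_G(T,v)$ and $\psi_T(v)(x)=H(T,v)$.

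For these equalities, I would fix $v\in T_x^1M$ (respectively $v\in T_x^1M\cap G^\perp$) and use the cocycle identity $\Gamma_{s+t}(v)(x)=\Gamma_s(v)(x)+\Gamma_t(\Phi_sv)(\phi_sx)$ derived in Subsection~\ref{sec:linear-poincare-flow-1}, which makes $t\mapsto \Gamma_t(v)(x)$ an additive (hence simultaneously sub- and super-additive) cocycle over the induced flow $\Phi_s$ on $T^1M$. Lemma~\ref{le:claimDx} supplies the uniform bound and the continuity of the one-sided generators $D_{G\pm}$, so Lemma~\ref{le:subdiff} applies both to $\Gamma$ and to $-\Gamma$, giving $D_{G-}(v)=D_{G+}(v)=:D_G(v)$ as a genuine two-sided derivative at $t=0$. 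Transporting this derivative along the orbit via the additivity yields $\partial_t\Gamma_t(v)(x)=D_G(\Phi_tv)$ for every $t$, and integration gives
\begin{align*}
  \Gamma_t(v)(x)=\int_0^t D_G(\Phi_sv)\,ds=H_G(t,v).
\end{align*}
Exactly the same argument with $P^t_x$, $\wh{\Phi_s}$, and the generators $D_\pm$ from Lemma~\ref{le:claimDx} yields $\psi_t(v)(x)=H(t,v)$ on $T_x^1M\cap G^\perp$.

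With these pointwise identities I would take the supremum over unit $v$ on both sides and divide by $T$; since the identities are exact (no inequality is lost), $\liminf$ and $\limsup$ pass through unchanged, producing~\eqref{eq:negexpflow-sink} and~\eqref{eq:weaknegexpflowpoint} directly from the theorem hypotheses. The only delicate point, and what I expect to be the main obstacle, is justifying the existence of the two-sided generator $D_G$ (and $D$) starting only from subadditivity: one must check that the hypotheses of Lemma~\ref{le:subdiff} are genuinely satisfied for both $\psi$ and $-\psi$, which is why Lemma~\ref{le:claimDx} was set up to control $D_\pm$ symmetrically and bound $\Gamma_t$, $\psi_t$ above \emph{and} below by additive cocycles. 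Once this is in hand the rest of the argument is formal.
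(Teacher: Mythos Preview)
Your proposal is correct and follows essentially the same route as the paper. The paper's own proof is extremely terse: it simply invokes the identity $H_G(T,v)=\ln\|D\phi_T(x)\cdot v\|$ (which it had already justified in the paragraphs immediately preceding the lemma, via additivity of $\Gamma_t(v)$ under $\Phi_s$ and the statement ``we also get equalities in~\eqref{eq:lnLPFint} with these generators''), bounds this by $\ln\|D\phi_T(x)\|$, and passes to the liminf; you are unpacking the same argument in more detail, including the point that additivity (not mere subadditivity) of the vector cocycle is what makes the two-sided generator $D_G(v)$ well defined.
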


We postpone the proofs of these technical lemmas to
Section~\ref{sec:proof-pliss-lemma}.

%%%%%%%%%%%%%%%%%%%%%%%%%%%%%%%%%%%%%%%%%%%

\subsection{Asymptotic contraction in all directions}
\label{sec:asympt-contract-all}

We are ready to start the proof of
Theorem~\ref{mthm:negexpflow-sink}. If\footnote{There is no loss in
  generality to assume that $x\in M\setminus\sing(G)$, for otherwise
  there is nothing to prove.}  $x\in M\setminus\sing(G)$
satisfies~\eqref{eq:negexpflow-sink}, then there exists $\zeta>0$ and
$T_n\nearrow\infty$ so that
$\sup_{v\in T_x^1M}H_G(T_n,v)\le -\zeta T_n$. In addition, we observe
that
\begin{align*}
  -\zeta T_n
  >
  \sup_{v\in T^1_xM}\int_0^{T_n}D_G(\Phi_sv)\,ds
  \ge
  T_n\cdot \sup_{v\in T^1_xM} \inf_{0\le s\le T_n} D_G(\Phi_sv)
\end{align*}
and so $A=\sup_{v\in T^1_xM}\inf_{0\le t\le T_n}D_G(\Phi_sv)<-\zeta$
as required to apply Theorem~\ref{thm:plissflows} to $H_G(\cdot,v)$
with $c=-\zeta, \epsilon>0$ and fixed $v\in T_x^1M$, as long as $A$ is
a real number, which is guaranteed by Lemma~\ref{le:claimDx}. So we
apply Theorem~\ref{thm:plissflows} with $\epsilon=\zeta/4$,
$v\in T_x^1M$ to obtain times $0<\tau=\tau(v)<T_n$ so that
$(T_n-\tau)\ge \theta T_n$ with $\theta\in(0,1)$ satisfying for
$s\in[\tau,T_n]$
\begin{align}\label{eq:directional}
    \ln\|D\phi_{s-\tau}(\phi_\tau x)\Phi_\tau v\|
    =
    \int_\tau^sD_G(\Phi_uv)\,du
    \le
    -\frac{\zeta}{2} (s-\tau).
\end{align}
We can replace $v$ by any $u\in V\cap T_x^1M$ for some open
neighborhood $V$ of $v$ in the unit sphere at $T_xM$. By compactness
of the unit sphere, we obtain a finite cover $V_1,\dots, V_k$ of
$T_x^1M$ with associated times $\tau_1,\dots,\tau_k$ so that
$T_n-\tau_i\ge\theta T_n$ and $u\in V_i$
satisfy~\eqref{eq:directional} in the place of $v$ for
$T_n-\tau_i\le s\le T_n$, for each $i=1,\dots,k$. Hence for
$\tau_n=\min_{i=1,\dots,k} \tau_i$ we obtain ~\eqref{eq:directional}
for all $v\in T_x^1M$ and $T_n-\tau_n\le s\le T_n$, and also
$T_n-\tau_n\ge\theta T_n$.

In particular, since $\Phi_t$ is a bijection from $T_x^1M$ to
$T^1_{\phi_t x} M$, we have that $0$ is a reverse hyperbolic time of
$f=\phi_1$ for the point $x(\tau_n)=\phi_{\tau_n}x$ with respect
to\footnote{Here $[t]=\sup\{n\in\ZZ^+:n\le t\}$ is the integer part of
  $t\in\RR$.}  $[T_n-\tau_n]$, i.e.
$\|Df^k(x(\tau_n))\|\le\prod_{j=0}^{k-1}\|Df(f^jx(\tau_n))\|\le
e^{-\zeta k/2}, 1\le k\le[T_n-\tau_n]$.

Applying Lemma~\ref{le:cballs} we obtain
$\delta_1,\lambda_1$ and $W_n=B(x(\tau_n),\delta_1)$ such
that $f^j\mid_{W_n}: W_n\to f^jW_n=W_{n+j}$ is a
$\lambda_1^j$-contraction for $0<j\le[T_n-\tau_n]$.

We are in the exact same setting of
Subsection~\ref{sec:nested-contract-argu}. Hence, we can
find $m\in\ZZ^+$ so that $f^m$ has a contracting fixed point
$p$ whose basin contains $x(\tau_n)$ for some large
$n\in\ZZ^+$. Thus $p=\sigma\in\sing(G)$ is a hyperbolic
attracting singularity (a sink) for the vector field $G$ and
$\phi_tx\xrightarrow[t\to\infty]{}\sigma$. This completes
the proof of the first part of the statement of
Theorem~\ref{mthm:negexpflow-sink}.

%%%%%%%%%%%%%%%%%%%%%%%%%%%%%%%%%%%%%%%%%%%%%%%%%%%

\subsection{Asymptotic sectional contraction along a
  trajectory}
\label{sec:asympt-contract-alon}

If the trajectory of $x\in M\setminus\sing(G)$ satisfies the
left hand side of~\eqref{eq:weaknegexpflowpoint}, then there exists
$c=-\zeta$ and $T_n\nearrow\infty$ so that
 $\sup_{v\in T_x^1M\cap G^\perp}H(T_n,v)\le c T_n$ and moreover
\begin{align*}
  c T_n>
  \int_0^{T_n}D(\wh{\Phi_s}v)\,ds
  \ge
  T_n\cdot \sup_{v\in T_x^1M\cap G^\perp}\inf_{0\le s\le T_n}
  D(\wh{\Phi_s}v)
  = T_n\cdot A
\end{align*}
and so $A<c$. We apply Theorem~\ref{thm:plissflows} to $H(\cdot,v)$
with $\epsilon=-c/2$ and fixed $v\in T_x^1M\cap G^\perp$ to obtain,
reasoning as in the previous subsection, times $0<\tau_n<T_n$ so that
$(T_n-\tau_n)\nearrow\infty$ satisfying for $s\in[\tau_n,T_n]$
\begin{align}
  \label{eq:LPFcontraction}
    \ln\|P^{s-\tau_n}_{\phi_{\tau_n}x}\|
    =
    \sup_{v\in T_x^1M\cap G^\perp}\int_{\tau_n}^sD(\wh{\Phi_u}v)\,du
    \le
    -\frac{\zeta}{2} (s-\tau_n).
\end{align}
We say that $\tau_n$ is an $e^{-\zeta/2}$-reverse hyperbolic
time with respect to $T_n$.

We divide the proof of Theorem~\ref{mthm:weaknegexpFlow} in
two main cases presented in the following
Subsection~\ref{sec:orbit-away-from}, for trajectories not
accumulating any equilibrium;
and Subsection~\ref{sec:orbit-accumul-some} for trajectories which
accumulate some equilibrium.

Afterwards, we complete the proof of
Theorem~\ref{mthm:strongnegexpFlow} in
Subsection~\ref{sec:limsup-case}.

\begin{remark}
  \label{rmk:slidehyptime}
  For $\tau_n<T_n$ as in~\eqref{eq:LPFcontraction}, any $t>\eta>0$ and
  assuming without loss of generality that $\zeta<4L$, we get
  $\int_{\tau_n+\eta}^tD(\wh{\Phi_s}v)\,ds =
  \int_{\tau_n}^tD(\wh{\Phi_s}v)\,ds -
  \int_{\tau_n}^{\tau_n+\eta}D(\wh{\Phi_s}v)\,ds \le -\zeta t/2+L\eta
  $ for each $v\in T_x^1M\cap G^\perp$ (recall that $|D|\le L$) which
  is bounded by $(-\zeta/2+L\eta/t)t <- \zeta t /4 < -\zeta(t-\eta)/4$
  since $L\eta/t<\zeta/4 \iff \eta<t\zeta/4L$ and
  $\eta<\eta\zeta/4L<t\zeta/4L$. This shows that if $\tau_n$ is an
  $e^{-\zeta/2}$-reverse hyperbolic time w.r.t.  $T_n$, then
  $\ln\|P_{\phi_{\tau_n+\eta}}^t\|=\sup\int_{\tau_n+\eta}^tD(\wh{\Phi_s}v)\,ds
  <-\zeta(t-\eta)/4$ and so any $s\in(\tau_n,T_n)$ becomes a
  $e^{-\zeta/4}$-reverse hyperbolic time w.r.t. $T_n$.
\end{remark}

%%%%%%%%%%%%%%%%%%%%%%%%%%%%%%%%%%%%%%%%%%%%%%%%%%%%

\subsection{Trajectory away from equilibria}
\label{sec:orbit-away-from}

First, we assume that $\omega_G(x)\cap\sing(G)=\emptyset$
and that $\sing(G)$ is a finite subset, so that there exists
$d_0>0$ such that
$\dist(\phi_tx,\sing(G))\ge d_0, \forall t\ge0$ and also
$\dist(\omega_G(x),\sing(G))\ge d_0$.

We show that~\eqref{eq:LPFcontraction} implies that the
flow contracts distances uniformly in the transverse
direction to the vector field along longer and longer orbit
segments of the positive orbit of $x$. Compacteness of $M$ then
guarantees, by an argument similar to the one presented in
Section~\ref{sec:positive-lyapun-expo}, the existence of a
Poincar\'e section of the flow, together with a neighborhood
of a hitting point of the orbit of $x$, which is sent inside
itself by some Poincar\'e return map. This provides a sink
for that Poincar\'e return map which, as is well-known, gives a
periodic attracting orbit for the flow containing $x$ in its
basin.

\subsubsection{Forward sectional contracting balls}
\label{sec:forward-section-cont}

The uniform bound on the distance away from equilibria
ensures that there exists $0<\rho\le d_0$ such that for each
$y\in\cO^+_G(x)\cup\omega_G(x)$\footnote{We write
  $\cO^+_G(x)=\{\phi_tx: t\ge0\}$ and note that both
  $\omega_G(x)$ and $\sing(G)$ are compact.} we can
construct a Poincar\'e cross-section of $G$ through $y$ as
\begin{align}
  \label{eq:Psectionormal}
  S_y=\exp_y(B(0,\rho)\cap G(y)^\perp),
\end{align}
where $\exp_y:T_yM\to M$ is the standard exponential map
induced by the Riemannian structure on $M$; $B(0,\rho)$ is
the $\rho$-neighborhood of the origin in the tangent space
$T_yM$ with the distance induced by
$\|\cdot\|_y=\langle\cdot,\cdot\rangle^{1/2}$; and
$G(y)^\perp$ is the subspace of $T_yM$ orthogonal to
$G(y)$. We also write
$S_y(\xi)=\exp_y(B(0,\xi\rho)\cap G(y)^\perp)$ for
$\xi\in(0,1]$.

By uniform continuity of
$\eta:(M\setminus B(\sing(G),d_0)))^2 \to\RR,
(z,w)\mapsto\|P^1_z\|/\|P^1_w\|$ together with the
subadditivity of $\psi$, we can find $\xi_0>0$ so that
\begin{align}
  \label{eq:unifcontquotient}
  \phi_{[-2,2]}S_y(\xi_0)\cap\sing(G)=\emptyset
  \;\text{  and   }\;
  z,w\in S_y(\xi_0), 0\le t\le1 \implies
  \frac{\|P^t_z\|}{\|P^t_w\|}\le e^{\zeta/4}=\lambda_1^{-1}.
\end{align}
Similarly to Section~\ref{sec:positive-lyapun-expo}, we
write $x(t)=\phi_tx$ for $t\in\RR$ in what follows.
\begin{proposition}[Existence of forward sectional
  contracting balls]
  \label{pr:tubecontraction}
  Let $\tau_n<T_n$ be the pair of strictly increasing
  sequences obtained before
  satisfying~\eqref{eq:LPFcontraction}. For every
  $\delta_0>0$ there exists $\xi_0>0$ satisfying
  \eqref{eq:unifcontquotient} such that, if
  $\dist(\phi_tx,\sing(G))\ge d_0, \forall
  t\in[\tau_n,T_n]$, then for each $s\in(\tau_n,T_n]$ there
  exists a $C^1$ smooth well-defined diffeomorphism with its
  image $R_s:S_{x(\tau_n)}(\xi_0)\to S_{x(s)}(\xi_0)$ such
  that $R_s$ is a Poincar\'e map, $R_s(x(\tau_n))=x(s)$ and
  $R_s$ is an $e^{-\frac{\zeta}4(s-\tau_n)}$-contraction.
\end{proposition}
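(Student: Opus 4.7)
My plan is to mimic the construction of forward contracting balls from Lemma~\ref{le:cballs} in the flow setting, replacing discrete iterates by short-time Poincar\'e maps between the normal sections $S_{x(s)}(\xi_0)$. The key observation that bridges discrete and continuous settings is that at a reverse hyperbolic time $\tau_n$, the Linear Poincar\'e flow over $[\tau_n, s]$ contracts by $e^{-\zeta(s-\tau_n)/2}$, and the uniform quotient bound \eqref{eq:unifcontquotient} means we lose at most a factor $\lambda_1^{-1}=e^{\zeta/4}$ per unit time when we replace the norm of $P^t$ at the base point by its norm at a nearby point, leaving a net contraction rate of $e^{-\zeta/4}$.

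First I would fix $\xi_0>0$ so small that (i) \eqref{eq:unifcontquotient} holds; (ii) for every $y$ in the compact tube $\phi_{[-2,2]}(\cO^+_G(x)\cup\omega_G(x))\cap\{\dist(\cdot,\sing(G))\ge d_0/2\}$ the map $\exp_y$ is a diffeomorphism on $B(0,2\xi_0\rho)\cap G(y)^\perp$; and (iii) for any such $y$ and any $z\in S_y(\xi_0)$ and $|t|\le 1$, the flow hits a unique first intersection with $S_{\phi_t y}$, giving a well-defined local Poincar\'e map $R^t_y:S_y(\xi_0)\to S_{\phi_t y}$ whose derivative at $y$ agrees with $P^t_y$ (this is the standard identification of the Linear Poincar\'e flow with the derivative of the holonomy). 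By compactness of the cross-section and the uniform quotient bound \eqref{eq:unifcontquotient}, the Mean Value Theorem applied to a smooth curve in $S_y(\xi_0)$ yields
\begin{align*}
\dist(R^t_y z, R^t_y z') \le \lambda_1^{-1}\|P^t_y\|\cdot \dist(z,z'), \qquad z,z'\in S_y(\xi_0), \ 0\le t\le 1.
\end{align*}

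Next I would construct $R_s$ by induction on the integer part of $s-\tau_n$. Write $s-\tau_n=N+r$ with $N\in\ZZ_{\ge0}$ and $r\in[0,1)$. Set $V_0:=S_{x(\tau_n)}(\xi_0)$ and inductively define $V_{j+1}:=R^1_{x(\tau_n+j)}(V_j)\subset S_{x(\tau_n+j+1)}(\rho)$. Using the bound above and telescoping, for any $z,z'\in V_0$,
\begin{align*}
\dist(R^{j}_{\star}z, R^{j}_{\star}z')
\le \prod_{i=0}^{j-1}\lambda_1^{-1}\|P^1_{x(\tau_n+i)}\|\cdot\dist(z,z')
\le e^{j\zeta/4}\cdot e^{-j\zeta/2}\cdot\dist(z,z') = e^{-j\zeta/4}\dist(z,z'),
\end{align*}
where the second inequality uses that $\tau_n$ is a reverse hyperbolic time (the partial product of the $\|P^1_{x(\tau_n+i)}\|$ factors is bounded by $e^{-j\zeta/2}$ by \eqref{eq:LPFcontraction} and Remark~\ref{rmk:slidehyptime}). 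This gives $\diam V_j\le \xi_0\rho\cdot e^{-j\zeta/4}$, so $V_j\subset S_{x(\tau_n+j)}(\xi_0)$ for every $j\le N$, and the compositions are well-defined. Finally, composing with the fractional-time Poincar\'e map $R^r_{x(\tau_n+N)}$ (bounded by a constant because $r<1$ and $\|P^r\|$ is uniformly bounded on a compact tube, which I absorb into the constant by adjusting $\xi_0$) yields the full diffeomorphism $R_s:S_{x(\tau_n)}(\xi_0)\to S_{x(s)}(\xi_0)$.

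The main obstacle is verifying that $V_j$ stays inside $S_{x(\tau_n+j)}(\xi_0)$ throughout the iteration so that the short-time Poincar\'e map $R^1_{x(\tau_n+j)}$ remains defined on it; this is precisely what the hyperbolic-time contraction estimate gives (the image diameter is bounded by $\xi_0\rho e^{-j\zeta/4}<\xi_0\rho$), but it is crucial that the $\lambda_1^{-1}$ loss from the uniform quotient bound \eqref{eq:unifcontquotient} does not exceed the contraction gained at each step. The choice $\lambda_1^{-1}=e^{\zeta/4}$ in \eqref{eq:unifcontquotient} is calibrated exactly so that the compounded factor is $e^{-\zeta/4}$ per step, which also yields the claimed exponential contraction rate $e^{-\zeta(s-\tau_n)/4}$.
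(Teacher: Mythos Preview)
Your approach mirrors the paper's almost exactly: decompose $[\tau_n,s]$ into unit subintervals, identify the derivative of each short-time Poincar\'e holonomy with the Linear Poincar\'e Flow at the base point, use the quotient bound \eqref{eq:unifcontquotient} to transfer contraction from the center orbit to nearby points, check inductively that images remain in the $\xi_0$-sections, and compose. The paper additionally runs a continuation argument (the set $E$ of radii along a fixed direction on which the holonomy remains a $\lambda_1$-contraction into the target section) to show the one-step map is defined on all of $S_{x(\tau_n)}(\xi_0)$; you instead absorb this into the a priori choice of $\xi_0$ via compactness, which is equally valid.

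There is, however, a gap in your telescoping estimate. You assert $\prod_{i=0}^{j-1}\|P^1_{x(\tau_n+i)}\|\le e^{-j\zeta/2}$ from \eqref{eq:LPFcontraction}, but \eqref{eq:LPFcontraction} bounds the norm of the \emph{composite} $\|P^j_{x(\tau_n)}\|$, and submultiplicativity only yields $\|P^j_{x(\tau_n)}\|\le\prod_i\|P^1_{x(\tau_n+i)}\|$, which is the wrong direction. If instead you invoke Remark~\ref{rmk:slidehyptime} to bound each factor by $e^{-\zeta/4}$, the product is at most $e^{-j\zeta/4}$, and after the $\lambda_1^{-j}=e^{j\zeta/4}$ loss from \eqref{eq:unifcontquotient} you are left with no contraction at all. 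The paper avoids this trap by not telescoping norms: it argues directly that each one-step map $R_i$ is a $\lambda_1$-contraction on the full section $S_{x(\tau_n+i)}(\xi_0)$ (combining $\|P^1_{x(\tau_n+i)}\|\le e^{-\zeta/2}$ from the reverse-hyperbolic-time property with \eqref{eq:unifcontquotient}) and then composes contractions. In your framework the cleanest repair is to shrink $\xi_0$ so that \eqref{eq:unifcontquotient} gives a quotient bound $e^{\zeta/8}$ rather than $e^{\zeta/4}$; then Remark~\ref{rmk:slidehyptime} yields a net per-step factor $e^{\zeta/8}\cdot e^{-\zeta/4}=e^{-\zeta/8}$, which is a genuine contraction and is all that the downstream nested-contraction argument requires.
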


This result is the analogous to Lemma~\ref{le:cballs} in the
flow setting with $\xi_0\rho$ playing the role of
$\delta_1$; see Figure~\ref{fig:secontrball}.

\begin{figure}[htpb]
  \centering
  \includegraphics[width=10cm]{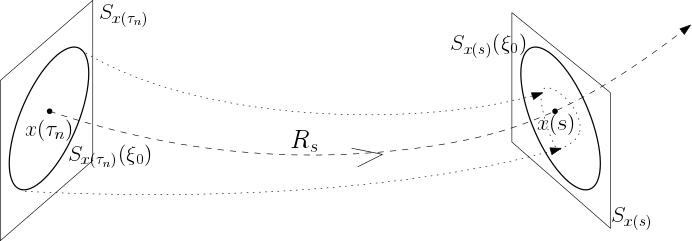}
  \caption{Sketch of a sectional contracting ball.}
  \label{fig:secontrball}
\end{figure}

\begin{proof}[Proof of Proposition~\ref{pr:tubecontraction}]
  Using $\dist(\phi_{[\tau_n,T_n]}(x),\sing(G))\ge d_0>0$ we
  can find $\xi_0$ satisfying \eqref{eq:unifcontquotient} as
  explained before the statement of the Proposition.
  
  We note that, by construction,
  $T_{\tau_n}S_{x(\tau_n)}(\xi_0)=G(x(\tau_n))^\perp$ so,
  from the choice of $\xi_0$ and for $s=\tau_n+1$ there
  exists a well-defined Poincar\'e map $R_s:W_s\to S_{x(s)}$
  from a neighborhood $W_s$ of $x(\tau_n)$ in
  $S_{x(\tau_n)}(\xi_0)$.  We also have
  \begin{align*}
    DR_s(x(\tau_n))
    &=
      \cO_{T_{x(s)}S_{x(s)}}\circ
      D\phi_{s-\tau_n}(x(\tau_n))\mid_{T_{x(\tau_n)}S_{x(\tau_n)}(\xi_0)}
    \\
    &=
      \cO_{G(x(s))^\perp}\circ
      D\phi_{s-\tau_n}(x(\tau_n))\mid_{G(x(\tau_n))^\perp}
      =
      P^1_{\phi_{\tau_n}x}.
  \end{align*}
  This together with~\eqref{eq:unifcontquotient} and
  Remark~\ref{rmk:slidehyptime} ensures that
  $\|DR_s(z)\|< e^{-\frac{\zeta}4(s-\tau_n)}=\lambda_1$ for all
  $z\in W_s$. Then the map $R_s$ is a diffeomorphism with its image
  which contracts distances at a rate $\lambda_1$.

  We claim that we may take $W_s=S_{x(s)}(\xi_0)$. To prove this,
  we fix a direction $v\in G(x(\tau_n))^\perp$ with
  $\|v\|=1$ and consider the set
  \begin{align*}
    E=\{t\in(0,\xi_0)
    &:
    R_s\big(\exp_{x(\tau_n)}(\xi v)\big)\in S_{x(s)}(\lambda_1)
    \quad\text{and}
    \\
    &\|DR_s\big(\exp_{x(\tau_n)}(\xi v)\big)\|< \lambda_1,
      \forall 0\le \xi\le t\}.
  \end{align*}
  Clearly $E\subset(0,\xi_0)$ and we have already shown that
  $\sup E>0$. We note that the claim follows if we prove
  that $\sup E=\xi_0$. Indeed, since the unit vector $v$ was
  arbitrarily chosen in $G(x(\tau_n))^\perp$, then $\sup
  E=\xi_0$ implies that the Poincar\'e map $R_s$ is
  well-defined and a $\lambda_1$-contraction on the whole of
  $S_{x(\tau_n)}(\xi_0)$.

  To prove that $\sup E=\xi_0$ we argue by contradiction: let
  us assume that $0<\alpha=\sup E<\xi_0$. Then the curve
  $\gamma(t)=\exp_{x(\tau_n)} (t v), t\in[0,\alpha]$ is sent
  to a curve $R_s(\gamma)\subset S_{x(s)}(\xi_0)$ with
  length\footnote{By construction $\gamma$ is a curve with
    unit speed.}
  \begin{align*}
    |R_s\circ\gamma|
    &=
    \int_0^\alpha\|DR_s\circ\gamma\cdot\dot\gamma(t)\|\,dt
    \le
      \lambda_1\int_0^\alpha\|\dot\gamma(t)\|\,dt=\lambda_1\alpha
      <\lambda_1\xi_0.
  \end{align*}
  Hence, on the one hand, we have for each
  $0<t<\alpha$
  \begin{align*}
  \dist_S(R_s(\gamma(t)),x(s))\le|R_s\circ\gamma|\le\lambda_1\xi_0<\xi_0,
  \end{align*}
  where $\dist_S$ is the induced distance on $S_{x(s)}(\xi_0)$ by
  the Riemannian distance of $M$. But
  $\phi_{[-2,2]}S_{x(\tau_n)}(\xi_0)$ is a flow box, thus
  $\phi_{[-2,2]}x(\gamma(\alpha))\cap
  S_{x(s)}(\xi_0)=\lim_{t\to\alpha}R_s(\gamma(t))$ and we can
  extend $R_s$ from $\gamma([0,\alpha))$ to
  $\gamma(\alpha)$.  Then
  $\dist_S(R_s(\gamma(\alpha)),x(s))\le\lambda_1\xi_0<\xi_0$
  and this enables us to use the flow box again to extend $R_s$ to
  a neighborhood of $\gamma(\alpha)$ in
  $S_{x(\tau_n)}(\xi_0)$. This shows that there is $t\in E $ with
  $t>\alpha$ and this contradiction completes the proof of
  claim that $\sup E=\xi_0$.

  We observe that the argument above is valid for any
  $s\in(\tau_n,\tau_n+1]$ replacing the rate $\lambda_1$ by
  $\lambda_1^{s-\tau_n}$.

  Let now $s\in(\tau_n, T_n]$ be given and let us write
  $s-\tau_n=k+\xi$ with $k\in\ZZ^+$ and $\xi\in[0,1)$. Then
  we divide the interval $[\tau_n,s]$ into
  $\{[\tau_n+i,\tau_n+i+1)\}_{i=0,\dots,k-1}$ together with
  $[\tau_n+k,s]$ and consider the Poincar\'e maps
  $R_i:S_{x(\tau_n+i)}(\xi_0)\to S_{x(\tau_n+i+1)}(\xi_0)$
  for $i=0,\dots,k-1$ and
  $R_k:S_{x(\tau_n+k)}(\xi_0)\to S_{x(T_n)}(\xi_0)$.

  Finally, since the image of $R_i$ is contained in the
  $\lambda_1\rho$-ball around $x(\tau_n+i+1)$ in
  $S_{x(\tau_n+i+1)}(\xi_0)$ for $i=0,\dots,k-1$ and the image of
  $R_k$ is inside the $\lambda_1^{\xi}$-neighborhood of
  $x(s)$ in $S_{x(s)}(\xi_0)$, then the composition
  $R_k\circ R_{k-1}\circ\dots\circ R_0$ is well-defined and
  a $\lambda_1^{k+\xi}$-contraction from $S_{x(\tau_n)}(\xi_0)$ to
  $S_{x(s)}(\xi_0)$. Since
  $\lambda_1^{k+\xi}=e^{-\frac{\zeta}4(s-\tau_n)}$, the
  proof of the proposition is complete.
\end{proof}

%%%%%%%%%%%%%%%%%%%%%%%%%%%%%%%%%%%%%%%%%%%

\subsubsection{Infinitely many contracting flow boxes with
  arbitrary long size and uniform domains}
\label{sec:infinit-many-contrac}

We follow the same strategy as in
Subsection~\ref{sec:nested-contract-argu} replacing $\delta_1$-balls
by $\xi_0\rho$-neighborhoods on local cross-sections: by compactness
we fix a subsequence of $\tau_n$, which we denote by the same letters,
such that $x(\tau_n)\xrightarrow[n\to\infty]{}\bar x\in\omega(x)$ and
$(T_n-\tau_n)\nearrow\infty$ together with $S_{\bar x}=S_{\bar x}(1)$
and (using Proposition~\ref{pr:tubecontraction}) a collection of
Poincar\'e maps with domains in a neighborhood $S_{\bar x}(\xi_0)$ of
$\bar x$ in $S_{\bar x}$, as follows.

We start by fixing the local Poincar\'e map\footnote{That
  is, $\Phi(\phi_t x)=x$ for all $x\in S(\xi_0)$ and $-2\le t\le2$.}
$\Phi:\phi_{[-2,2]}S_{\bar x}(\xi_0)\to S_{\bar x}(\xi_0)$ and
$\xi\in(0,1)$ such that $4\xi<1-\xi-\xi^2$. We assume
without loss of generality that
$\Phi x(\tau_k)\subset S_{\bar x}(\xi\xi_0/2)$ for all $k\ge1$ and
that $\Phi x(\tau_k)=x(\tau_k+\eta_k)$ for some
$0<\eta_k<\bar\epsilon$ and $\bar\epsilon>0$ small, since
$\omega(x)$ is an invariant set under the action of the
flow.  Then we choose $j$ so that
$T_j-\tau_j>\tau_2-\tau_1, \lambda_1^{\tau_2-\tau_1}<1/2$
and $\dist(\Phi x(\tau_j),\bar x)<\xi^2\xi_0\rho/2$; see
Figure~\ref{fig:acumulacao} again setting
$x_{n_i}=\Phi x(\tau_i+\eta_i), i=1,2,j$.

Note that, from Remark~\ref{rmk:plissFlows}, the times
$\tau_i+\eta_i$ also satisfy the conclusion of
Proposition~\ref{pr:tubecontraction}, since we may take
$\bar\epsilon>0$ as small as needed.

Now we just repeat the arguments in
Subsection~\ref{sec:nested-contract-argu} with
$\delta_1=\xi_0\rho/2$ and
$f^{n_2-n_1}=\Phi\circ\phi_{\tau_2+\eta_2-(\tau_1-\eta_1)}$
to obtain an attracting fixed point $p$ for this last map
whose basin in $S_{\bar x}(\xi_0)$ contains $x_{n_1}$. Then
the orbit $\cO_G(p)$ is periodic and since $p$ is a sink for
$f^{n_2-n_1}$, we conclude that $\cO_G(p)$ is a periodic
(hyperbolic) sink for $G$ and $x$ belongs to its basin of
attraction.

This completes the proof of
Theorem~\ref{mthm:weaknegexpFlow} in this case.

%%%%%%%%%%%%%%%%%%%%%%%%%%%%%%%%%%%%%%%%%%%%%%%%%%%%%%%%%%%%%

\subsection{Trajectory accumulating some equilibrium}
\label{sec:orbit-accumul-some}

Alternatively, we assume that the orbit of $x$ accumulates
some singularity, that is $\sigma\in\omega(x)\cap\sing(G)$
and, from now on, we assume that each element of $\sing(G)$
is hyperbolic. Then
\begin{itemize}
\item either $\omega(x)=\{\sigma\}$ and so $x(t)\to\sigma$
  when $t\to+\infty$ and
  \begin{itemize}
      \item if $\sigma$ is a sink, then $x$ belongs to its
        basin and we have nothing to prove;
      \item if $\sigma$ is a source, then $\phi_tx\in U$ for
        some small neighborhood $U$ of $\sigma$ and
        arbitrarily large values of $t>0$. Hence
        $x\in\phi_{-t}U$ and $\diam\big(\phi_{-t}U\big)\to0$
        when $t\nearrow\infty$ so $x=\sigma$, a
        contradiction.  So we are left with
  \item $\sigma$ is a hyperbolic saddle and $x$ belongs to its stable
    manifold.
  \end{itemize}
\item or $\omega(x)\supsetneq\{\sigma\}$ and then $\sigma$
  is again a hyperbolic saddle equilibrium, since
  \begin{itemize}
  \item if $\sigma$ is a sink, then because
    $x(t)\in W^s_{G}(\sigma)$ for some $t>0$, we conclude that
  $x(t)\to\sigma$ when $t\to+\infty$ and so
  $\omega(x)=\{\sigma\}$, a contradiction; otherwise
\item $\sigma$ is a source, and then $x=\sigma$ which is a
  contradiction again.
  \end{itemize}
\end{itemize}
Next we argue that such accumulation can only happen if
$\sigma$ is a codimension $1$ saddle, completing the proof
of Theorem~\ref{mthm:weaknegexpFlow}.

\subsubsection{Trajectory in the stable manifold of some
    equilibrium}
\label{sec:case-xttosigm}

In case $x(t)\to\sigma$ when $t\nearrow\infty$, then $x\in
W^s(\sigma)$ and we prove the following for later use.

\begin{lemma}
  \label{le:stablemanifold}
  Let $\sigma\in\sing(G)$ be a hyperbolic equilibrium and
  $q\in W^s(\sigma)\setminus\{\sigma\}$ such that
  $\liminf_{t\to\infty}\ln\|P_q^t\|^{1/t}<0$. Then $\sigma$ is a sink.
\end{lemma}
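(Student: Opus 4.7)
The plan is to argue by contradiction: suppose $\sigma$ is hyperbolic but not a sink. The hypothesis $q\in W^s(\sigma)\setminus\{\sigma\}$ forbids $\sigma$ from being a source, so $A=DG(\sigma)$ has a non-trivial unstable subspace $E^u\subset T_\sigma M$, complementary to the stable subspace $E^s=T_\sigma W^s_{\mathrm{loc}}(\sigma)$. Let $\mu>0$ be strictly less than $\min\{\Re\lambda:\lambda\in\spec(A|_{E^u})\}$. I will exhibit a vector $v'\in N_q$ for which $\|P_q^tv'\|\ge ce^{t\mu}$ for all large $t$, contradicting the hypothesis $\liminf_{t\to\infty}\ln\|P_q^t\|^{1/t}<0$.

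By the cocycle relation for $P_G$ the asymptotic growth rate of $\|P_q^t\|$ is unaffected if $q$ is replaced by $\phi_{T_0}q$, so I first choose $T_0$ large enough that the forward orbit of the new $q$ lies in a small neighborhood $U$ of $\sigma$; by the stable manifold theorem, $W^s_{\mathrm{loc}}(\sigma)$ is a $C^1$ submanifold tangent to $E^s$ at $\sigma$, so one can pick $C^1$ coordinates on $U$ centred at $\sigma$ identifying $U$ with a neighbourhood of $0$ in $E^s\oplus E^u$ and straightening $W^s_{\mathrm{loc}}\cap U=\{y^u=0\}$; since asymptotic rates are unchanged under equivalent Riemannian metrics on $U$, I may also assume the metric makes $E^s\perp E^u$ at $\sigma$. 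Tangency of $G$ to $W^s_{\mathrm{loc}}$ gives $G^u(y^s,0)\equiv 0$, so along $\phi_tq=(z^s(t),0)$ the Jacobian in these coordinates has the block upper-triangular form
\begin{align*}
DG(\phi_tq)=\begin{pmatrix}A^{ss}(t) & A^{su}(t)\\ 0 & A^{uu}(t)\end{pmatrix},\qquad A^{uu}(t)\xrightarrow[t\to\infty]{}A|_{E^u},
\end{align*}
by continuity of $DG$ and the fact that $z^s(t)\to 0$.

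Fix any $v^u\in E^u\setminus\{0\}$ and take $v=(0,v^u)\in T_qM$. The block structure above is preserved by the variational equation $\dot Z=DG(\phi_tq)Z$, so the $E^u$-component $w(t)$ of $D\phi_t(q)v$ solves $\dot w=A^{uu}(t)w$, $w(0)=v^u$. Since all eigenvalues of $A|_{E^u}$ have real part exceeding $\mu$, roughness of exponential dichotomy for asymptotically autonomous linear systems (a Levinson-type result, or a direct Gr\"onwall comparison with the autonomous $\dot w=A|_{E^u}w$ iterated on unit time windows after $\phi_tq$ has entered a sufficiently small neighbourhood of $\sigma$) yields $\|w(t)\|\ge c_1e^{t\mu}\|v^u\|$ for some $c_1>0$ and all large $t$. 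Correct $v$ to $v'=v-\alpha G(q)$ with $\alpha=\langle v,G(q)\rangle/\|G(q)\|^2$, so that $v'\in N_q$; because $G(q)\in E^s$, this correction only modifies the $E^s$-component of $v$, and the block structure implies that $E^s$-perturbations of the initial datum contribute nothing to the $E^u$-coordinate of $D\phi_t(q)v'$, so its $E^u$-part is still $w(t)$.

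Finally I pass from $D\phi_t(q)v'$ to $P_q^tv'=O_{\phi_tq}D\phi_t(q)v'$: since $G(\phi_tq)\in E^s$ and the metric is close to orthogonal throughout $U$, the direction of $G(\phi_tq)$ stays uniformly transverse to $E^u$, so the orthogonal projection onto $G(\phi_tq)^\perp=N_{\phi_tq}$ diminishes the $E^u$-component of any vector by at most a uniform factor. This yields $\|P_q^tv'\|\ge c_2\|w(t)\|\ge c_3e^{t\mu}$ for all large $t$, the desired contradiction. The principal obstacle is the roughness-of-dichotomy step: with $G$ only $C^1$, the coefficient $A^{uu}(t)$ is merely continuous with no a priori rate of convergence to $A|_{E^u}$, so the exponential lower bound on $\|w(t)\|$ requires either a standard Levinson-type theorem for asymptotically autonomous linear ODEs or a Gr\"onwall iteration on short time windows once the orbit lies in a sufficiently small neighbourhood of $\sigma$.
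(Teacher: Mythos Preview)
Your overall strategy is sound and differs from the paper's. The paper does not straighten $W^s_{\mathrm{loc}}(\sigma)$; instead it invokes the flow version of Pliss' Lemma (Theorem~\ref{thm:plissflows}) to produce reverse hyperbolic times $\tau_n\nearrow\infty$ along which $\|P^t_{q(\tau_n)}\|\le e^{-\zeta t/2}$ for $0<t<T_n-\tau_n$, and then uses the Gronwall estimate (Lemma~\ref{le:Gronwall}) to compare $P^t_{q(\tau_n)}$ directly with $\cO_{q(\tau_n+t)}e^{tDG_\sigma}$ as $n\to\infty$. From this comparison any $v\in E^u_\sigma$ is forced to satisfy $\tfrac12\|e^{tDG_\sigma}v\|\le(\bar\delta_n+e^{-\zeta t/2})\|v\|$ with $\bar\delta_n\to0$, whence $E^u_\sigma=\{\vec0\}$. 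Your route is more geometric---tracking the $E^u$-component of one explicit vector---and actually yields the stronger conclusion $\liminf_{t\to\infty} t^{-1}\ln\|P_q^t\|\ge\mu>0$; the paper's route has the advantage of reusing the Pliss/Gronwall machinery already set up for the rest of Section~\ref{sec:flow-case}.

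There is, however, a real technical gap. Since $G$ is only $C^1$, the stable manifold is only $C^1$, so the straightening diffeomorphism $\Psi$ is only $C^1$. The pushed-forward field $\Psi_*G$ is then merely continuous, and the displayed Jacobian $DG(\phi_tq)$ \emph{in the new coordinates}---hence the variational equation $\dot w=A^{uu}(t)w$ you invoke---is not defined. What does survive is that the flow $\tilde\phi_t=\Psi\circ\phi_t\circ\Psi^{-1}$ is $C^1$ in the space variable and preserves $\{y^u=0\}$, so $D\tilde\phi_t(z)$ (not $D\tilde G(z)$) is block upper-triangular for $z\in\{y^u=0\}$, with $E^u$-block $\Phi^{uu}_t(z)$ continuous in $(t,z)$. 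Using that $\Phi^{uu}_1(0)$ is similar to $e^{A|_{E^u}}$ together with continuity of $z\mapsto\Phi^{uu}_1(z)$, the unit-time iteration you allude to does give $\|\Phi^{uu}_n(\tilde q)v^u\|\ge c\,e^{n\mu}\|v^u\|$ in an adapted norm, and the remainder of your argument then goes through. So the proof can be completed, but it must be run at the level of $D\tilde\phi_t$ rather than through a variational ODE in the straightened chart.
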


Applying the lemma shows that $\omega(x)=\{\sigma\}$ can
only happen if $\sigma$ is a sink.

We need the following consequence of Gronwall's Inequality
in several arguments in what follows, so we state here for
later use and present a proof in
Section~\ref{sec:proof-pliss-lemma}.

\begin{lemma}
  \label{le:Gronwall}
  Let $q_n\in M$, $\sigma\in\sing(G)$ and $t>0$ be such
  that\footnote{As usual $\dist(A,B)=\inf\{\dist(a,b):a\in
    A, b\in B\}$ for $A,B\subset M$.}
  $\dist(\phi_{[0,t]}q_n,\sigma)
  \xrightarrow[n\to\infty]{}0$. Then
$
\|P_{q_n}^t -\cO_{\phi_tq_n} e^{tDG_\sigma}\|
\le
\|D\phi_t(q_n)-e^{tDG_\sigma}\|
\le\bar\delta_n t e^{Lt}
$
  where
      \begin{align*}
  \bar\delta_n=\sup_{0\le s\le t}&
  \|\cO_{\phi_s q_n}DG_{\phi_s q_n}-\cO_{\phi_s q_n}DG_{\sigma}\|
  % \\
  % &
    \le
  \sup_{0\le s\le t}\|DG_{\phi_s q_n}-DG_{\sigma}\|
  \xrightarrow[n\to\infty]{}0.
\end{align*}
\end{lemma}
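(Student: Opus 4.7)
The plan is to exploit the linear variational equation for $D\phi_t(q_n)$ and compare it with the constant-coefficient linear system whose fundamental solution is $e^{tDG_\sigma}$. Since the flow of a $C^1$ vector field gives rise to $C^2$ trajectories and $D\phi_t(q)$ satisfies the $M$-valued ODE
\begin{equation*}
\tfrac{d}{dt}D\phi_t(q)=DG_{\phi_tq}\circ D\phi_t(q),\qquad D\phi_0(q)=I,
\end{equation*}
while the matrix exponential satisfies $\tfrac{d}{dt}e^{tDG_\sigma}=DG_\sigma\circ e^{tDG_\sigma}$ with $e^{0}=I$, the difference $A(t):=D\phi_t(q_n)-e^{tDG_\sigma}$ satisfies $A(0)=0$ and
\begin{equation*}
A'(t)=DG_{\phi_tq_n}\,A(t)+\bigl(DG_{\phi_tq_n}-DG_\sigma\bigr)\,e^{tDG_\sigma}.
\end{equation*}

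Next I would integrate this identity from $0$ to $t$ and take norms, using $\|DG_y\|\le L$ everywhere and $\|e^{sDG_\sigma}\|\le e^{sL}$, to obtain
\begin{equation*}
\|A(t)\|\le L\int_0^t\|A(s)\|\,ds+\bar\delta_n\int_0^te^{sL}\,ds,
\end{equation*}
where $\bar\delta_n:=\sup_{0\le s\le t}\|DG_{\phi_sq_n}-DG_\sigma\|$. Applying the standard (integral) Gronwall inequality $u(t)\le h(t)+L\int_0^te^{L(t-s)}h(s)\,ds$ with $h(s)=\bar\delta_n(e^{sL}-1)/L$ gives, after a short computation in which the two terms involving $(e^{tL}-1)/L$ cancel, the clean bound $\|A(t)\|\le\bar\delta_n\,t\,e^{Lt}$. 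This is the core estimate.

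For the left-hand inequality, I would use $P_{q_n}^t=\cO_{\phi_tq_n}\circ D\phi_t(q_n)$ so that
\begin{equation*}
P_{q_n}^t-\cO_{\phi_tq_n}e^{tDG_\sigma}=\cO_{\phi_tq_n}\bigl(D\phi_t(q_n)-e^{tDG_\sigma}\bigr),
\end{equation*}
and since the orthogonal projection $\cO_{\phi_tq_n}$ has operator norm at most $1$, this difference is bounded by $\|A(t)\|$. The same contractive property of $\cO_{\phi_sq_n}$ yields the stated inequality between the two forms of $\bar\delta_n$. Finally, since $M$ is compact and $DG$ is continuous on $M$, $DG$ is uniformly continuous; combined with the hypothesis that $\dist(\phi_{[0,t]}q_n,\sigma)\to 0$ this forces $\sup_{0\le s\le t}\|DG_{\phi_sq_n}-DG_\sigma\|\to 0$, giving $\bar\delta_n\to 0$.

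The main obstacle is mostly bookkeeping: getting the Gronwall constant to come out as $e^{Lt}$ rather than $e^{2Lt}$ requires carefully applying the integral form of Gronwall's inequality to the inhomogeneous term $\bar\delta_n\int_0^te^{sL}\,ds$, and observing the cancellation; a naive application would give a worse exponent and a weaker (but still sufficient) bound.
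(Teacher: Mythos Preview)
Your proposal is correct and follows essentially the same route as the paper: set up the linear variational equation, decompose the difference $D\phi_t(q_n)-e^{tDG_\sigma}$ into the homogeneous part $DG_{\phi_tq_n}A(t)$ plus the inhomogeneity $(DG_{\phi_tq_n}-DG_\sigma)e^{tDG_\sigma}$, apply Gronwall, and use $\|\cO_{\phi_tq_n}\|\le1$ for the Poincar\'e-flow inequality. Your careful use of the integral form of Gronwall to extract the sharp factor $e^{Lt}$ (rather than $e^{2Lt}$) is in fact tidier than the paper's own final estimate.
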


Now we can present the proof of the previous lemma.

\begin{proof}[Proof of Lemma~\ref{le:stablemanifold}]
  Since
  $\liminf_{t\to\infty} \sup_{v\in T_q^1M\cap G^\perp}H(t,v)/t<0$ for
  $H(t,v)=\int_0^tD(\wh{\Phi_s}v)\,ds$, we can find $\zeta>0$ and
  reverse hyperbolic times $\tau_n$ associated to $T_n\nearrow\infty$
  so that $T_n-\tau_n>\theta T_n$ as in~\eqref{eq:LPFcontraction}.

  We take $\tau_n>0$ large enough so that
  $\dist(\phi_{[\tau_n,T_n]}q,\sigma) \xrightarrow[n\to\infty]{}0$ and
  for any given fixed $0<t<T_n-\tau_n$ we apply
  Lemma~\ref{le:Gronwall} with $q_n=q(\tau_n)=\phi_{\tau_n}q$ to get
  $ \|P_{q(\tau_n)}^t -\cO_{x(\tau_n+t)} e^{tDG_\sigma}\| \le
  \bar\delta_n t e^{Lt} \xrightarrow[n\to\infty]{}0$.

Finally, since $\|P_{q(\tau_n)}^t\|\le e^{-\zeta t/2}$ for
all $0<t<T_n-\tau_n$, by the definition of $\tau_n$ as a
reverse hyperbolic time, we conclude that for any given
fixed $t>0$, non-zero vectors in $G(q(\tau_n))^\perp$ are
contracted by $e^{tDG_\sigma}$ at a positive rate. But for
any vector $v\in E^u_\sigma$ we have
$\|\cO_{q(\tau_n+t)}v\|\ge\|v\|/2$ for all $n$ large enough
and so by invariance of $E^u_\sigma$ we deduce
\begin{align*}
\frac12\|e^{tDG_\sigma}v\|
  \le
  \|\cO_{x(\tau_n+t)} e^{tDG_\sigma}v\|
  \le
  \bar\delta_n\|v\|+\|P_{q(\tau_n)}^tv\|
  \le
  (\bar\delta_n+e^{-\zeta t/2})\|v\|
\end{align*}
and since $t>0$ is arbitrary, we conclude that $v=\vec0$,
that is, $E^u_\sigma=\{\vec0\}$ and $\sigma$ is a sink.
\end{proof}

\subsubsection{Trajectory accumulates but does not converge
  to an equilibrium}
\label{sec:case-omegaxs}

If $\omega(x)\supsetneq\{\sigma\}$,
% the orbit of $x$ accumulates $\sigma$ but does not
% converge to $\sigma$,
then we again separate the argument into
different cases, as follows.

\begin{enumerate}
\item \emph{The orbits segments $x([\tau_n,T_n])$ are away
    from $\sing(G)$.}

  If there exists a subsequence $n_k\nearrow\infty$ such
  that the family of orbit segments
  $\{ x([\tau_{n_k},T_{n_k}])\}_k$ does not accumulate
  $\sing(G)$, then we can argue just as in the previous
  Subsection~\ref{sec:orbit-away-from}. That is, we consider
  $\bar x$ an accumulation point of $x(\tau_{n_k})$ and the
  cross-section $S_{\bar x}$ with a size $\rho$ given by at
  most
  $\inf_k\{\dist\big(x([\tau_{n_k},T_{n_k}]),\sing(G)\big)\}>0$,
  and repeat the same reasoning in
  Subsection~\ref{sec:infinit-many-contrac} to obtain a sink
  in $S_{\bar x}$ for some Poincar\'e return map. This is a
  contradiction with the assumption that
  $\omega(x)\neq\{\sigma\}$ and we conclude that this case cannot
  happen.
  
\item \emph{Alternatively}, since $\sing(G)$ is finite,
  \emph{there exists
    $\sigma_0\in\sing(G)$ and
    $s_{n_k}\in[\tau_{n_k},T_{n_k}]$ so that
    $x(s_{n_k})\to\sigma_0$.}
\end{enumerate}
In what follows we reindex the sequences to $\tau_k,T_k$ and
$s_k$ to simplify the notation. We note that $\sigma_0$ must
be a hyperbolic saddle; for otherwise $\sigma_0$ would be a
sink and then $\omega(x)=\{\sigma_0\}$.

\begin{lemma}[convergence to stable manifold of
      $\sigma_0$]
  \label{le:accstablesink}
  Let $\tau_n<T_n$ be such that $\tau_k\nearrow\infty$,
  $(T_k-\tau_k)\nearrow\infty$ and satisfy the left hand side
  of~\eqref{eq:LPFcontraction}. Assume that there exists a
  hyperbolic saddle $\sigma_0\in\sing(G)$ and
  $s_n\in[\tau_k,T_k]$ so that $x(s_k)\to\sigma_0$. If there
  exists $q\in M\setminus\sing(G)$ and (perhaps for a
  subsequence) $x(\tau_k)\to q$, then
  $q\in W^s(\sigma_0)\setminus\{\sigma_0\}$ and for $t\ge0$
  we have $\|P^t_q\|\le e^{-\frac{\zeta}{4}t}$.
\end{lemma}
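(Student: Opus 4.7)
The plan is to prove the two conclusions in turn: first the uniform contraction bound $\|P_q^t\|\le e^{-\zeta t/4}$, then the stable-manifold membership $q\in W^s(\sigma_0)\setminus\{\sigma_0\}$.

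\emph{Step 1 (transferring the contraction to $q$).} Since $q\in M\setminus\sing(G)$, the map $(y,t)\mapsto P_y^t$ is jointly continuous on a flow-box neighborhood of $q$ for each fixed $t$. Because $T_k-\tau_k\nearrow\infty$, for any fixed $t>0$ we have $t<T_k-\tau_k$ for all large $k$, so~\eqref{eq:LPFcontraction} yields $\|P^t_{x(\tau_k)}\|\le e^{-\zeta t/2}$; passing to the limit $k\to\infty$ gives $\|P^t_q\|\le e^{-\zeta t/2}$. Modulo a small sliding along the flow (writing $x(\tau_k)=\phi_{\eta_k}\tilde q_k$ with $\tilde q_k$ on a cross-section through $q$ and $\eta_k\to 0$), Remark~\ref{rmk:slidehyptime} absorbs the error and produces the stated weaker estimate $\|P^t_q\|\le e^{-\zeta t/4}$ for every $t\ge 0$.

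\emph{Step 2 (trapping the forward orbit of $q$ in $\omega(x)$).} For each fixed $t\ge 0$, continuity of the flow gives $\phi_tq=\lim_k x(\tau_k+t)$, and $\tau_k+t\to\infty$ forces $\phi_tq\in\omega(x)$. Hence $\overline{\cO^+_G(q)}\subseteq\omega(x)$, and in particular $\omega_G(q)\subseteq\omega(x)$.

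\emph{Step 3 (showing $\omega_G(q)=\{\sigma_0\}$).} I would first show $\omega_G(q)\subseteq\sing(G)$ by contradiction: if $y\in\omega_G(q)$ were regular, Step~1 applied to $y$ in place of $q$ would give $\|P_y^t\|\le e^{-\zeta t/4}$, and Proposition~\ref{pr:tubecontraction} combined with the nested-contractions argument of Subsection~\ref{sec:infinit-many-contrac} would produce a hyperbolic periodic sink $\Gamma\subseteq\overline{\cO^+_G(y)}\subseteq\omega(x)$. Since the basin of $\Gamma$ is open and has $\Gamma$ as its $\omega$-limit, the compact connected invariant set $\omega(x)$ would reduce to $\Gamma$, contradicting $\sigma_0\in\omega(x)$. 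Hence $\omega_G(q)\subseteq\sing(G)$ and, being connected, nonempty and contained in the finite set $\sing(G)$, $\omega_G(q)=\{\sigma'\}$ for some hyperbolic equilibrium $\sigma'$, so $\phi_tq\to\sigma'$. To identify $\sigma'=\sigma_0$, note that $s_k-\tau_k\to\infty$ (otherwise a subsequential limit $T$ would give $\phi_Tq=\sigma_0$, impossible since $q$ is regular). Fix $T$ so large that $\phi_Tq$ lies in a small linearizing neighborhood $U$ of $\sigma'$; for $0\le t\le T$ the orbit $\phi_{[0,t]}q$ stays uniformly away from $\sing(G)\setminus\{\sigma'\}$, so Proposition~\ref{pr:tubecontraction} applied along $\cO^+_G(q)$ forces $x(\tau_k+T)\in U$ for all large $k$ by sectional shadowing. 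Local hyperbolicity at $\sigma'$ then keeps $x(\tau_k+t)$ inside an enlarged neighborhood of $\sigma'$ for $t\in[T,s_k-\tau_k]$, so the assumption $x(s_k)\to\sigma_0$ forces $\sigma_0\in\overline U$, and shrinking $U$ yields $\sigma'=\sigma_0$. As $q$ is regular, $q\ne\sigma_0$ and the lemma follows.

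\emph{Main obstacle.} The delicate point is the final identification $\sigma'=\sigma_0$: the Poincar\'e-tube contraction of Proposition~\ref{pr:tubecontraction} degenerates as $\phi_tq$ approaches the singularity $\sigma'$, so it has to be spliced with a linearization argument in a fixed hyperbolic neighborhood of $\sigma'$ to rule out that the shadowing orbit of $x$ escapes $U$ along the unstable direction of $\sigma'$ and reaches a different singularity $\sigma_0\ne\sigma'$ within the time window $[\tau_k+T,s_k]$. This is where the hyperbolicity of $\sing(G)$ is essentially used.
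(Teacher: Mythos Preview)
Your Step~1 is correct and in fact slightly cleaner than the paper's version, which obtains the same estimate via the integral representation of $\ln\|P^t_q v\|$ and a pointwise comparison of the infinitesimal generators $D(\wh{\Phi_s^q}v)$ and $D(\wh{\Phi_s^{x(\tau_k)}}v)$.

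However, Step~3 has two genuine gaps. First, the claim $\omega_G(q)\subseteq\sing(G)$: to run the nested-contractions argument of Subsection~\ref{sec:infinit-many-contrac} you must invoke Proposition~\ref{pr:tubecontraction}, whose hypothesis is that the relevant orbit segment stays at distance $\ge d_0$ from $\sing(G)$. The orbit of $q$ visits any regular $y\in\omega_G(q)$ at times $t_j$, but between successive visits it may (and in fact will, once you know $\omega_G(q)$ meets $\sing(G)$) approach singularities, so the contracting tube need not exist on $[t_j,t_j+(t_2-t_1)]$. This is exactly the dichotomy that forces the paper to split the analysis into Subsections~\ref{sec:orbit-away-from} and~\ref{sec:orbit-accumul-some}; invoking it again inside the proof of the present lemma is circular.

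Second, and more seriously, the identification $\sigma'=\sigma_0$: the sentence ``local hyperbolicity at $\sigma'$ then keeps $x(\tau_k+t)$ inside an enlarged neighborhood of $\sigma'$ for $t\in[T,s_k-\tau_k]$'' is false for a saddle. The point $x(\tau_k+T)$ is merely \emph{close} to $W^s(\sigma')$, not on it, so it escapes along $E^u_{\sigma'}$ in time growing only logarithmically with the proximity, whereas $s_k-\tau_k\to\infty$. Nothing you have written prevents the escaping orbit from reaching a different singularity $\sigma_0$ by time $s_k$. You flag this as the main obstacle but do not overcome it.

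The paper bypasses both issues with a single direct construction: it places an incoming Poincar\'e section $\Sigma$ through a point $p\in W^s(\sigma_0)\setminus\{\sigma_0\}$ close to $\sigma_0$, so that each segment $x([\tau_k,s_k])$ must cross $\Sigma$ at a point $p_k=x(\tau_k+\upsilon_k)$ with $p_k\to p$. After reducing (without loss of generality) to the case where $x([\tau_k,s_k])$ does not accumulate $\sing(G)\setminus\{\sigma_0\}$, the segments $x([\tau_k,\tau_k+\upsilon_k])$ are uniformly away from every singularity, so Proposition~\ref{pr:tubecontraction} yields a Poincar\'e map $R:S_q(\xi_0/2)\to\Sigma$ with $R(x(\tau_k+\eta_k))=p_k$; passing to the limit gives $R(q)=p\in W^s(\sigma_0)$, hence $q\in W^s(\sigma_0)$. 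No analysis of $\omega_G(q)$ or trapping near saddles is needed.
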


\begin{figure}[htpb]
  \centering
  \includegraphics[width=13cm]{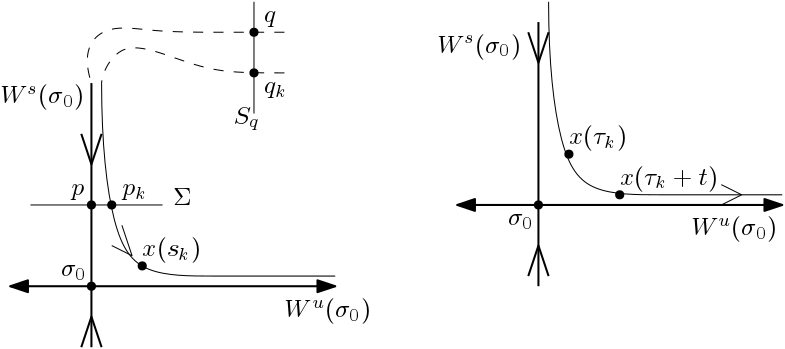}
  \caption{Relative positions of $p, p_k=x(\tau_k)\in\Sigma$ and
    $x(s_k)$ close to $\sigma_0$ on the left hand side; and
    of $x(\tau_k), x(\tau_k+t)$ and $\sigma_0$ on the right hand side.}
  \label{fig:acumulasigma}
\end{figure}

\begin{proof}
  We observe that we can assume without loss of generality
  that the segment $x([\tau_k,s_k])$ does not accumulate
  $\sing(G)\setminus\{\sigma_0\}$. For otherwise, because
  $\sing(G)$ is finite, we would replace $s_k$ by another
  sequence $\tau_k<s_k'<s_k$ satisfying this property.

  Then we can find a cross-section $\Sigma$ of $G$ at
  $p\in W^s(\sigma)\setminus\{\sigma\}$ close to $\sigma$ so
  that the segment $x([\tau_k,s_k])$ crosses $\Sigma$ at a
  point $p_k=x(\tau_k+\upsilon_k)$ and
  $p_k\xrightarrow[k\to\infty]{}p$; see the left hand side
  of Figure~\ref{fig:acumulasigma}.

  Hence, we may apply Proposition~\ref{pr:tubecontraction}
  to the pair $\tau_k,\tau_k+\upsilon_k$ after choosing
  $S_q(\xi_0)$ the cross-section of $G$ through $q$ with
  uniform size, where the value of $\xi_0$ depends on the
  distance between $\{ x([\tau_k,\tau_k+\upsilon_k]) \}_k$
  and $\sing(G)$. Since
  $q_k=x(\tau_k+\eta_k)=\Phi x(\tau_k)\in S_q(\xi_0)$ is
  such that $q_k\to q$ (and so $\eta_k\to0$), we have that
  $q_k\in S_q(\xi_0/2)$ for all big enough $k$, and obtain a
  Poincar\'e map $R:S_q(\xi_0/2)\to\Sigma$ so that
  $p_k=Rq_k$.  Hence, $p=Rq$ and thus $q\in W^s(\sigma)$ as
  claimed.

  Moreover,
  $\sup_{0\le t\le\upsilon_k}
  |\phi_tq-x(\tau_k+t)|\xrightarrow[k\to\infty]{}0$ and also
  by construction we obtain that for any given
  $\epsilon_0>0$ there exists $m\in\ZZ^+$ such
  that\footnote{We can assume that all linear maps are
    comparable in local charts given by the exponential
    map.}
  % $\sup_{0\le t\le\upsilon_k}
  % \|DG_{\phi_tq}-DG_{x(\tau_k+t)}\|<\zeta/4$ and
  $\sup_{0\le t\le\upsilon_k}
  \|\cO_{\phi_tq}DG_{\phi_tq}-\cO_{x(\tau_k+t)}DG_{x(\tau_k+t)}\|<\zeta/4$
  for all $k>m$. Thus for any unit vector $v$ not parallel to $G(q)$
  and $G(x(\tau_k))$ we obtain from the expressions for $H(v,t)$ at
  $q$ and $x(\tau_k)$ that
  $ \ln\frac{\|P^t_qv\|}{\|P^t_{x(\tau_k)}v\|} \le
  \int_0^t|D(\wh{\Phi^q_s}v)-D(\wh{\Phi_s^{x(\tau_k)}}v)|\,ds \le
  \frac{\zeta}4t $, where $\wh{\Phi_s^z}v=\frac{P_z^sv}{\|P_z^sv\|}$
  for any $z\in M\setminus\sing(G)$ and $v\in T_z^1M$ not parallel to
  $G(z)$. This ensures that $\ln\|P^t_qv\|\le-\zeta t /4$ for all
  $v\in T_q^1M\cap G^\perp$ which implies last inequality in the
  statement of the lemma for $0\le t\le\upsilon_k$.

  Finally, note that we may take $\Sigma=\Sigma_k$ closer to
  $\sigma_0$ and obtain $\upsilon_k\nearrow\infty$. This
  completes the proof of the lemma.
\end{proof}

\subsubsection{Conclusion of the proof of
  Theorem~\ref{mthm:weaknegexpFlow}}
\label{sec:conclus-proof-theore}

Now we subdivide the argument to conclude the proof of
Theorem~\ref{mthm:weaknegexpFlow} into the following cases
according to the accumulation points of $x(\tau_k)$.

\begin{description}
\item[Case $x(\tau_k)\to q\notin\sing(G)$] from
  Lemma~\ref{le:accstablesink} we have
  $q\in W^s(\sigma_0)\setminus\{\sigma_0\}$ and also
  $\|P_q^t\|\le e^{-\frac{\zeta}4 t}, t\ge0$.  From
  Lemma~\ref{le:stablemanifold} we conclude that $\sigma_0$
  is a sink.    
  \item[Otherwise, $x(\tau_k)\to \sigma\in\sing(G)$] clearly
    $\sigma$ is again a saddle.  By the local linearization
    given by the Hartman-Grobman Theorem, for any given
    fixed $t>0$ we have
    $\dist\big( x(\tau_k,T_k],\sigma)\big)
    \xrightarrow[k\to\infty]{}0$; see e.g. \cite{PM82} and
    the right hand side of Figure~\ref{fig:acumulasigma}.

   We now use Lemma~\ref{le:Gronwall} to obtain
$ \|P_{x(\tau_k)}^t -\cO_{x(\tau_k+t)} e^{tDG_\sigma}\| \le
\bar\delta_k t e^{Lt} \xrightarrow[k\to\infty]{}0 $.
  
Now, since $\|P_{x(\tau_k)}^t\|\le e^{-\zeta t/2}$ for all
$0<t<T_k-\tau_k$ by the definition of $\tau_k$ as a reverse
hyperbolic time, then non-zero vectors in
$G(x(\tau_k))^\perp$ are contracted by $e^{tDG_\sigma}$ at a
positive rate for any given fixed $t>0$. This shows that
$\sigma$ is a saddle with contracting direction of dimension
at least $\dim \cO_{x(\tau_k)}=\dim M-1$.
\end{description}

We have shown that $\sigma_0$ is a codimension $1$ saddle
singularity.  The proof of Theorem~\ref{mthm:weaknegexpFlow}
is complete.

\begin{remark}
  \label{rmk:generalsliding}
  The argument in Remark~\ref{rmk:slidehyptime} would allow
  us to replace $\tau_k$ by any $s_k\in(\tau_k,T_k)$ and so we
  would replace $\sigma_0$ by $\sigma$ in the previous
  argument, but not more, since we do not know the distance
  between $x(\tau_k)$ and $x(T_k)$.
\end{remark}
  
\subsection{The strong sectional asymptotic contracting
  case}
\label{sec:limsup-case}

We now use the previous arguments to complete the proof of
Theorem~\ref{mthm:strongnegexpFlow}. For if we assume the
stronger asymptotic contracting condition on the right hand
side of~\eqref{eq:weaknegexpflowpoint}, then we can perform all
the arguments in Subsections~\ref{sec:orbit-away-from}
and~\ref{sec:orbit-accumul-some}, and we are left to show
that the positive orbit of $x$ is not allowed to accumulate
saddle equilibrium points.

If $x\in M\setminus\sing(G)$ is such that
$\omega(x)\supsetneq\{\sigma\}$ for some
$\sigma\in\sing(G)$, then $\sigma$ must be a hyperbolic
codimension one saddle by the previous arguments, that is,
$\dim E^u_\sigma=1$.  Using the Hartman-Grobman Theorem
again, we find ourselves in a situation similar to the one
on the left hand side of Figure~\ref{fig:acumulasigma}.

More precisely, we choose % a Poincar\'e section $\Sigma$ of
% $G$ through $p\in W^s(\sigma)\setminus\{\sigma\}$ and
a smooth manifold $\Sigma'$ with a cusp at $\sigma$
according to the following; see
Figure~\ref{fig:acumularsigmasup}.

\begin{lemma}\label{le:cusptransversal}
  Given a codimension one saddle singularity $\sigma$ of a
  $C^1$ vector field $G$, there exists a smooth hypersurface
  $\Sigma'$ of $M$ tangent to $E^s(\sigma)$ at $\sigma$ so
  that
  $\cos\angle(G(z),E^u_\sigma)\xrightarrow[z\to\sigma]{z\in\Sigma'}0$
  and $\Sigma'\setminus\{\sigma\}$ is a Poincar\'e section
  of the flow: that is, in a neighborhood $V$ of $\sigma$,
  for all $p\in V$, we have only one of the following
  \begin{itemize}
  \item either $\phi_tp\in V$ for all $t\ge0$ and
    $\phi_tp\xrightarrow[t\to+\infty]{}\sigma$ (i.e.
    $p\in W^s(\sigma)$);
  \item or $\phi_tp\in V$ for all $t\le0$ and
    $\phi_tp\xrightarrow[t\to-\infty]{}\sigma$ (i.e.
    $p\in W^u(\sigma)$);
  \item or $\exists t_0\in\RR: \phi_{t_0}p\in\Sigma'$ and
    $\phi_{[0,t_0]}p\subset V$ if $t_0\ge0$; or
    $\phi_{[-t_0,0]}p\subset V$ if $t_0<0$.
  \end{itemize}
\end{lemma}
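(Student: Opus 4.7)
The plan is to construct $\Sigma'$ explicitly in an adapted local chart around $\sigma$. First we apply the Stable and Unstable Manifold Theorems to obtain $C^1$ local invariant manifolds $W^s_{\mathrm{loc}}(\sigma)$ and $W^u_{\mathrm{loc}}(\sigma)$, of dimensions $d-1$ and $1$ respectively (since $\sigma$ is a codimension-one saddle). Endowing $T_\sigma M = E^s_\sigma \oplus E^u_\sigma$ with an adapted inner product that makes $DG_\sigma$ strictly dissipative on $E^s_\sigma$ (so that $\langle DG_\sigma v, v\rangle \le -\lambda\|v\|^2$ for some $\lambda>0$) and expansive on the one-dimensional $E^u_\sigma$ ($\langle DG_\sigma w, w\rangle \ge \mu\|w\|^2$), we set up a local chart $(v,w)\in E^s_\sigma\oplus E^u_\sigma$ centered at $\sigma$ in which $W^s_{\mathrm{loc}}(\sigma) = \{w=0\}$ and $W^u_{\mathrm{loc}}(\sigma) = \{v=0\}$. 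We then define
\[
\Sigma' := \Sigma'_+ \cup \Sigma'_-, \qquad \Sigma'_\pm := \bigl\{(v,w) : w = \pm\|v\|^2,\ \|v\|\le \rho\bigr\}
\]
for a small $\rho>0$. Each $\Sigma'_\pm$ is a smooth graph over $B(0,\rho)\subset E^s_\sigma$, hence a smooth embedded codimension-one submanifold, and since $\nabla(\pm\|v\|^2)|_{v=0} = 0$ both branches are tangent to $E^s_\sigma$ at $\sigma$, yielding the claimed cusp.

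The angle and transversality conditions follow from the Taylor expansion $G(v,w) = DG_\sigma(v,w) + R(v,w)$ with $\|R(v,w)\| = o(\|(v,w)\|)$. On $\Sigma'_\pm$ a point $z = (v,\pm\|v\|^2)$ has norm $\Theta(\|v\|)$ and its components in $E^s_\sigma, E^u_\sigma$ are of order $\|v\|$ and $\|v\|^2$ respectively; consequently the $E^s$-component of $G(z)$ has order $\|v\|$ while its $E^u$-component has order $\|v\|^2$, giving $\cos\angle(G(z),E^u_\sigma) = O(\|v\|) \to 0$ as $z\to\sigma$ along $\Sigma'$. For transversality, the outward normal to $\Sigma'_\pm$ at $z=(v,\pm\|v\|^2)$ is proportional to $n_z := (\mp 2v, 1)$, and
\[
\bigl\langle G(z), n_z\bigr\rangle = \mp 2\langle v, DG_\sigma v\rangle + \mu\|v\|^2 + o(\|v\|^2) \ge (2\lambda+\mu)\|v\|^2 + o(\|v\|^2) > 0
\]
for $0 < \|v\| \le \rho$ after possibly shrinking $\rho$, using the dissipativity/expansiveness of $DG_\sigma$ in the adapted inner product. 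Hence $G\pitchfork\Sigma'_\pm$ away from $\sigma$, so $\Sigma'\setminus\{\sigma\}$ is a smooth Poincar\'e section of the flow.

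For the topological trichotomy we shrink $V\ni\sigma$ so that on $V$ a standard cone-field/adapted-norm argument allows the nonlinear flow to inherit the hyperbolic estimates of the linearization with only an arbitrarily small loss. For $p\in V\setminus\bigl(W^s(\sigma)\cup W^u(\sigma)\bigr)$, write $\phi_t p = (v(t),w(t))$ while the orbit remains in $V$. The hyperbolic estimates yield that $\|v(t)\|$ is strictly decreasing and $|w(t)|$ strictly increasing, and in particular
\[
\frac{d}{dt}\log\frac{|w(t)|}{\|v(t)\|^2} \ge (\mu-\epsilon) + 2(\lambda-\epsilon) > 0
\]
for a small $\epsilon>0$ once $V$ is small enough. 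Hence $t\mapsto|w(t)|/\|v(t)\|^2$ is strictly monotone on the maximal subinterval of $\RR$ on which $\phi_t p \in V$, running continuously from $0$ to $\infty$; therefore the equation $|w(t)|=\|v(t)\|^2$ has a unique solution $t_0 = t_0(p)$, that is, $\phi_{t_0}p\in\Sigma'$, and by construction the orbit arc between $p$ and $\phi_{t_0}p$ stays in $V$. The alternatives $p\in W^s(\sigma)$ and $p\in W^u(\sigma)$ are handled by taking $V$ inside an isolating neighborhood of $\sigma$ adapted to the two invariant manifolds.

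The main obstacle is this last monotonicity step for the nonlinear flow: one must choose $V$ small enough that the remainder $R = G - DG_\sigma$ does not spoil the dominance between the $E^u$-expansion rate $\mu$ and the $E^s$-contraction rate $\lambda$, and also ensure that orbit segments between $p$ and $\phi_{t_0}p$ remain in $V$ throughout. The adapted inner product on $T_\sigma M$ is precisely what makes these estimates quantitative and uniform, so the standard cone-field argument applies and closes the proof.
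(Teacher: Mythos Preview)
Your proof is correct in substance but takes a genuinely different route from the paper. The paper constructs only the single paraboloid $\Sigma_0=\{(u,v):v=\|u\|^2\}$ in $E^s_\sigma\oplus E^u_\sigma$, sets $\Sigma'=\exp_\sigma\Sigma_0$, and then invokes the Hartman--Grobman theorem: it pulls $\Sigma'$ back through the topological conjugacy to a Lipschitz graph $\tilde\Sigma$ and verifies existence and uniqueness of the crossing time \emph{for the linear flow} $e^{tDG_\sigma}$ via an intermediate-value argument with $F(x,y)=g(x)-y$. You instead straighten $W^s_{\mathrm{loc}}$ and $W^u_{\mathrm{loc}}$ to coordinate planes, work with an adapted inner product, and obtain the Poincar\'e-section property for the \emph{nonlinear} flow directly from the monotonicity of $t\mapsto |w(t)|/\|v(t)\|^2$. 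Your approach is more hands-on and avoids the Hartman--Grobman machinery; the paper's approach reduces everything to explicit linear algebra at the cost of tracking how the conjugacy distorts $\Sigma'$.

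Two minor points. First, your union $\Sigma'_+\cup\Sigma'_-$ is not a single smooth hypersurface at $\sigma$ (the two sheets are tangent there), so strictly speaking you have produced two smooth hypersurfaces, each satisfying the conclusion on its side of $W^s_{\mathrm{loc}}$; the paper's single sheet \emph{is} smooth but, as written, only captures orbits on one side of $W^s_{\mathrm{loc}}$, which is all that is needed for the application in Subsection~\ref{sec:limsup-case}. Second, after your $C^1$ straightening the pushed-forward field is only $C^0$, so the key estimates $\dot w/w\to\mu$ and $\tfrac{d}{dt}\log\|v\|^2\le -2(\lambda-\epsilon)$ need the Hadamard-type factorisations coming from $G_u(v,0)=0$ and $G_s(0,w)=0$; these are available because the relevant partial derivatives remain continuous, but it is worth saying so explicitly.
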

We again postpone the proof of this result to
Subsection~\ref{sec:proof-pliss-lemma}.

\begin{figure}[htpb]
  \centering
  \includegraphics[height=6cm,width=6cm]{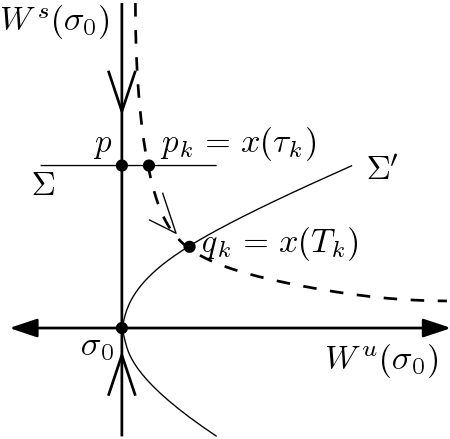}
  \caption{The strong assymptotic contracting case near a
    saddle singularity.}
  \label{fig:acumularsigmasup}
\end{figure}

From Lemma~\ref{le:cusptransversal} and since the trajectory
of $x$ satisfies the strong asymptotic contraction condition
in the right hand side of~\eqref{eq:weaknegexpflowpoint}, we
can find real valued sequences $\tau_k,T_k\nearrow\infty$
such that $q_k=x(T_k)\to\sigma$ and $q_k\in\Sigma'$; and
also $\ln\|P_x^{T_k}\|<-\xi T_k$ and $\tau_k<T_k$ is
an $e^{-\zeta/2}$-reverse hyperbolic time for $x$ with
respect to $T_n$.  We consider two cases:
\begin{description}
\item[Case A] either (perhaps for some subsequence)
  $p_k=x(\tau_k)\to p\in M\setminus\sing(G)$: in this case
  we get $p\in W^s(\sigma)$ by Lemma~\ref{le:accstablesink}
  and then conclude that $\sigma$ is a sink by
  Lemma~\ref{le:stablemanifold}.
\item[Case B] or, $p_k\to\sing(G)$.

  If $p_k\to\sigma_0\neq\sigma$, then $\sigma_0$ is again a
saddle and we use Remark~\ref{rmk:slidehyptime} to replace
$\tau_k$ by $s_k\in(\tau_k,T_k)$ so that $s_k$ is a
$e^{-\zeta/4}$-reverse hyperbolic time w.r.t. $T_n$; and
$p_k=x(s_k)\in\Sigma$ for a cross-section $\Sigma$ to $G$
through $p\in W^s(\sigma)\setminus\{\sigma\}$; see
Figure~\ref{fig:acumularsigmasup}. In addition $p_k\to p$
and we have reproduced Case A. Then $\sigma$ is a sink.

Otherwise, we have $p_k\to\sigma$ and so the segment
$x([\tau_k,T_k])$ tends to $\sigma$ when $k\nearrow\infty$.
\end{description}

We are left with $P_{p_k}^{t-\tau_k}$ a
$e^{-\zeta t/2}$-contraction for all $0<t\le T_k-\tau_k$.
Since $G(q_k)^\perp$ is very close to the expanding
direction $E^u_\sigma$, we obtain a contradiction as in the
previous Subsection~\ref{sec:conclus-proof-theore}.

More precisely, for any given fixed $t>0$ we have
$\dist\big( x([\tau_k\tau_k+t]),\sigma\big)
\xrightarrow[k\to\infty]{}0$ so we again apply
Lemma~\ref{le:Gronwall} to get
$ \|P_{x(\tau_k)}^t -\cO_{x(\tau_k+t)} e^{tDG_\sigma}\| \le
\bar\delta_k t e^{Lt} \xrightarrow[k\to\infty]{}0 $. By
the choice of $T_k$ and $\Sigma'$, we also have
$\|\cO_{x(T_k)}v\|\ge\|v\|/2$ for all $v\in E^u_\sigma$,
because $\angle(G(x(T_k)),E^u)\to\frac{\pi}2$; see
Figure~\ref{fig:acumularsigmasup}.  Then we conclude that
$E^u_\sigma=\{\vec0\}$ as in the proof of
Lemma~\ref{le:stablemanifold}.

This shows that $\sigma$ is a sink and completes the proof
of Theorem~\ref{mthm:strongnegexpFlow}.

\subsection{Weak (sectional) asymptotic expansion}
\label{sec:weak-section-expans}

Now we outline the proofs of the second part of
Theorem~\ref{mthm:negexpflow-sink} and of
Theorem~\ref{mthm:posectionalexp} since they follow very
similar lines to the asymptotic contracting case.

First we note that for the cocycles
$\tilde\Gamma(t,x)v=\ln\|D\phi_t(x)^{-1}v\|$ and
$\tilde\psi(t,x)=\ln\|(P_x^t)^{-1}v\|$ a result similar to
Lemma~\ref{le:claimDx} holds: they admit infinitesimal
generators $\tilde D_G(v)$ and $\tilde D(v)$ respectively,
which are continuous functions of $v\in T_{M\setminus\sing(G)}^1M$
and globally bounded. So the assumptions of the second part
of Theorem~\ref{mthm:negexpflow-sink} and
Theorem~\ref{mthm:posectionalexp} imply
\begin{align*}
  \liminf_{T\to\infty}
  &\frac1T\sup_{v\in T_x^1M}\int_0^T\tilde
  D_G(\Phi_sv)\,ds
  \qand
  \\
  \liminf_{T\to\infty}\frac1T
  \sup_{v\in T_x^1M\cap G^\perp}
  &\int_0^T\tilde D(\wh{\Phi_s}x)\,ds\;
  \text{ or }\;
    \limsup_{T\to\infty}
    \sup_{v\in T_x^1M\cap G^\perp}\frac1T\int_0^T\tilde D(\wh{\Phi_s}x)\,ds,
\end{align*}
are negative.
We then apply the same arguments as in the proof of
Theorems~\ref{mthm:negexpflow-sink},~\ref{mthm:weaknegexpFlow}
and~\ref{mthm:strongnegexpFlow} because the analogous to
Lemma~\ref{le:cballs} and
Proposition~\ref{pr:tubecontraction} for the (sectional)
expanding case at hyperbolic times are also true. We state
the results below and explain what we mean by hyperbolic times
in this setting.

In any of the $\liminf$ assumptions above, we can find
$\zeta>0$ and $T_n\nearrow\infty$ so that
\begin{align*}
  \sup_{v\in T_x^1M}\int_0^{T_n}
  \tilde D_G(\Phi_sv)\,ds\le-\frac{\zeta T_n}2\;
  \text{ or }\;
  \sup_{v\in T_x^1M\cap G^\perp}\int_0^{T_n}
  \tilde D(\wh{\Phi_s}x)\,ds\le-\frac{\zeta T_n}2
\end{align*}
and apply Theorem~\ref{thm:plissflows} to the functions
$\tilde H_G(t,v)=\int_{T_n-t}^{T_n} \tilde D_G(\Phi_sv)\,ds$
or $\tilde H(t,v)=\int_{T_n-t}^{T_n} \tilde D(\wh{\Phi_s}v)\,ds$,
respectively. We obtain $\tau_n<T_n$ with
$\tau_n\nearrow\infty$ and $T_n-\tau_n\nearrow\infty$ such
that for $0<t<\tau_n$
\begin{align}\label{eq:hyptime1flow}
  \ln\|\big(D\phi_{\tau_n-t}(\phi_tx)\big)^{-1}\|
  &=
    \sup_{v\in T_x^1M}\int_t^{\tau_n}\tilde  D_G(\Phi_sv)\,ds
  \le
    -\frac{\zeta}4(\tau_n-t)\;\text{ or}
  \\\label{eq:hyptimeLPF}
  \ln\|(P^{\tau_n-t}_{\phi_tx})^{-1}\|
  &=
    \sup_{v\in T_x^1M\cap G^\perp}\int_t^{\tau_n}\tilde  D_G(\wh{\Phi_s}v)\,ds
  \le
    -\frac{\zeta}4(\tau_n-t),
\end{align}
respectively. These are reverse hyperbolic times, where we
have uniform infinitesimal contractions from
$\phi_{\tau_n}x$ to $\phi_t(x)$ for $0<t<\tau_n$ with
respect to the flow in~\eqref{eq:hyptime1flow} or the Linear
Poincar\'e Flow in~\eqref{eq:hyptimeLPF}.

\begin{proof}[Proof of the second part of
  Theorem~\ref{mthm:negexpflow-sink}]
  In the case \eqref{eq:hyptime1flow}, $[\tau_n]$ is a
  hyperbolic time for $y_n=\phi_{\tau_n-[\tau_n]}(x)$ with
  respect to the $C^1$ diffeomorphism $f=\phi_1$. So we can
  apply Lemma~\ref{le:backcontr} to get infinitely many
  backward contracting balls to which we can apply the
  nested contractions argument from
  Subsection~\ref{sec:nested-contract-argu}. We obtain a
  repelling periodic orbit $p$ for $f$ such that
  $p\in B(x(\tau_n),\delta_1)$; thus also a repelling
  periodic orbit $\cO_G(p)$ for $G$. But
  $G(p)=D\phi_k(p)\cdot G(p)=Df^k(p)\cdot G(p)$ for some
  $k\ge1$ and so $1$ would be an eigenvalue of $Df^k(p)$ if
  $G(p)\neq\vec0$, contradicting the expansion of the
  derivative map at repelling periodic points. This shows
  that $G(p)=\vec0$, hence $p=\sigma\in\sing(G)$ is a
  repelling equilibrium (a source).

  Moreover, by the properties of backward contracting balls,
  we get $\dist(y_n,\sigma)\le e^{-\zeta\tau_n/4}$ where we
  can take $n$ larger than any predetermined quantity. Hence
  the distance between $\phi_{[0,1]}(x)$ and $\cO_G(p)$ is
  zero and $x$ is a source. This completes the proof of
  Theorem~\ref{mthm:negexpflow-sink}.
\end{proof}

\begin{proof}[Proof of Theorem~\ref{mthm:posectionalexp}]
  In the case~\eqref{eq:hyptimeLPF} we use the following,
  whose proof is left to the reader.

  \begin{proposition}[Existence of backward contracting
    balls]
    \label{pr:tubexpansion}
    Let $\tau_n<T_n$ be the pair of strictly increasing
    sequences obtained above. For every $\delta_0>0$ there
    exists $\xi_0>0$ satisfying \eqref{eq:unifcontquotient}
    such that, if
    $\dist(\phi_tx,\sing(G))\ge d_0, \forall
    t\in[0,\tau_n]$, then for each $s\in(0,\tau_n]$ there
    exists a $C^1$ smooth well-defined diffeomorphism with
    its image $R_s:S_{x(\tau_n)}(\xi_0)\to S_{x(s)}(\xi_0)$
    such that $R_s$ is a Poincar\'e map (for the
    time-reversed flow), $R_s(x(\tau_n))=x(s)$ and $R_s$ is
    an $e^{-\frac{\zeta}4(\tau_n-s)}$-contraction.
  \end{proposition}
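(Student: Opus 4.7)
The plan is to reduce to the already established Proposition~\ref{pr:tubecontraction} by passing to the time-reversed vector field $-G$. Under the flow of $-G$, the positive trajectory starting at $x(\tau_n)$ for time $\tau_n - s$ reaches $x(s)$, and the Linear Poincar\'e Flow of $-G$ over this time interval coincides, via the cocycle relation, with $(P^{\tau_n-s}_{\phi_s x})^{-1}$ of the original flow. The sectional expansion hypothesis~\eqref{eq:hyptimeLPF} then translates exactly into the sectional contraction condition~\eqref{eq:LPFcontraction} for $-G$ along the trajectory $\{\phi_{-u}x(\tau_n):0\le u\le\tau_n\}$, with the same rate $e^{-\zeta/4}$ and with $\tau_n - s$ playing the role of $s - \tau_n$ in the original inequality.

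Since $\sing(-G)=\sing(G)$, the uniform separation $\dist(\phi_t x,\sing(G))\ge d_0$ for $t\in[0,\tau_n]$ is precisely the hypothesis required to invoke Proposition~\ref{pr:tubecontraction} for the trajectory of $x(\tau_n)$ under the flow of $-G$. This produces, for each $s\in(0,\tau_n]$, a Poincar\'e map $\tilde R_s:S_{x(\tau_n)}(\xi_0)\to S_{x(s)}(\xi_0)$ for the flow of $-G$ which is a well-defined $C^1$ diffeomorphism onto its image, sends $x(\tau_n)$ to $x(s)$, and is an $e^{-\frac{\zeta}{4}(\tau_n-s)}$-contraction. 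Because $G^\perp$ and $(-G)^\perp$ coincide on $M\setminus\sing(G)$, the cross-sections $S_{x(t)}(\xi_0)=\exp_{x(t)}(B(0,\xi_0\rho)\cap G(x(t))^\perp)$ defined in~\eqref{eq:Psectionormal} are identical for both vector fields, and a Poincar\'e map for $-G$ is a Poincar\'e map for $G$ with reversed time. Taking $R_s:=\tilde R_s$ yields all the properties claimed.

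The one step that requires genuine checking, rather than direct quotation, is the analog of~\eqref{eq:unifcontquotient}: namely, uniform continuity of $(z,w)\mapsto \|(P^1_z)^{-1}\|/\|(P^1_w)^{-1}\|$ on the compact tube $\{y\in M:\dist(y,\sing(G))\ge d_0\}$. This follows from the $C^1$ character of $G$ together with the fact that $(P^1_z)^{-1}$ is uniformly bounded on this set, since $D\phi_1$ is a bijective linear map depending continuously on $z$ and the orthogonal projection onto $G(z)^\perp$ varies continuously with $z$ away from singularities. The main potential obstacle is essentially notational, namely carefully matching the direction of time in the cocycle indices and the orientation of the Poincar\'e sections under the reversal $G\leftrightarrow -G$; once this is carried out, the inductive extension argument establishing $\sup E=\xi_0$ in the proof of Proposition~\ref{pr:tubecontraction} transfers verbatim, with~\eqref{eq:hyptimeLPF} replacing~\eqref{eq:LPFcontraction} and backward Poincar\'e maps replacing forward ones.
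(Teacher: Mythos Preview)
Your approach is correct and is precisely what the paper intends: the proof of Proposition~\ref{pr:tubexpansion} is explicitly ``left to the reader'', and the natural way to supply it is exactly the time-reversal you describe, reducing to Proposition~\ref{pr:tubecontraction} applied to $-G$. Your verification that the Linear Poincar\'e Flow of $-G$ satisfies $Q_{x(\tau_n)}^{\tau_n-s}=(P^{\tau_n-s}_{\phi_s x})^{-1}$ via the cocycle identity $P_y^{-t}=(P_{\phi_{-t}y}^t)^{-1}$ is the key point, and your observation that the normal cross-sections $S_{x(t)}(\xi_0)$ are unchanged under $G\leftrightarrow -G$ is exactly what makes the transfer clean.

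One small caveat on constants: Proposition~\ref{pr:tubecontraction} as stated takes an $e^{-\zeta/2}$-reverse hyperbolic time as input (from~\eqref{eq:LPFcontraction}) and produces an $e^{-\zeta/4}$-contraction, the loss coming from the uniform-continuity bound~\eqref{eq:unifcontquotient}. The hypothesis~\eqref{eq:hyptimeLPF} only gives rate $\zeta/4$ for $-G$, so a verbatim application would yield $e^{-\zeta/8}$ rather than the $e^{-\zeta/4}$ claimed in Proposition~\ref{pr:tubexpansion}. This is harmless for the subsequent nested-contractions argument, which only needs \emph{some} uniform exponential rate; alternatively, since $\xi_0$ is at our disposal, one can sharpen the analog of~\eqref{eq:unifcontquotient} for $\|(P_z^t)^{-1}\|/\|(P_w^t)^{-1}\|$ to recover any desired fraction of the input rate. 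Either way, your argument goes through.
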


  We have now all the tools to apply the same arguments in
  Subsections~\ref{sec:orbit-away-from},~\ref{sec:orbit-accumul-some}
  and~\ref{sec:limsup-case} to conclude the proof of
  Theorem~\ref{mthm:posectionalexp}.
\end{proof}

%%%%%%%%%%%%%%%%%%%%%%%%%%%%%%%%%%%%%%%%%%%%%%%%%%%%%%%%%%%%%%%%%%

\subsection{Proofs of Lemmata}
\label{sec:proof-pliss-lemma}

Now we present the proofs of the technical result previously used as
tools in this section.

\begin{proof}[Proof of Lemma~\ref{le:claimDx}]
  We prove items (1-3) for $D_\pm(x)$ only since for $D_{G\pm}(x)$
  the arguments are completely analogous, but much simpler,
  and are left to the reader.
  
  To prove the continuity of
  $x\in M\setminus\sing(G)\mapsto D_\pm(x)$ note that
  $\|P^h_y\|\xrightarrow[h\to0]{}1$ and
  \begin{align}\label{eq:diffpsi}
    \frac1h\ln\frac{\|P^h_x\|}{\|P^h_y\|}
    &=
      \frac1{2h}\ln\frac{\|P^h_x\|^2}{\|P^h_y\|^2}
      =
      \frac1{2h}\ln\left(1+\frac{\|P^h_x\|-\|P^h_y\|^2}{\|P^h_y\|^2}\right)
      \le
      \frac{\|P^h_x\|^2-\|P^h_y\|^2}{2h\|P^h_y\|^2}.
      % \le
      % \frac{\|D\phi_h(x)-D\phi_h(y)\|}{h\|P^h_y\|}.
  \end{align}
  We express the time derivative of
  $\|P_x^h\|^2=\sup_{\|u\|=1}\langle P_x^hu,P_x^hu\rangle$
  as follows. On the one hand, writing $\hat G=G/\|G\|$
  and $\phi_hz=z_h$ for any $z\in M, h\in\RR$, we get
  \begin{align*}
    (P_x^h)'
    &=
      (\cO_{\phi_hx}D\phi_x(x))'
    =
    \big(
    D\phi_x(x) - \langle D\phi_x(x), \hat
    G(x_h)\rangle \hat  G(x_h)
      \big)'
    \\
    &=
      DG_{x_h}D\phi_h(x)-\langle DG_{x_h}D\phi_h(x),\hat
      G(x_h)\rangle \hat G(x_h)
    \\
    &\quad
      -\langle D\phi_h(x),\hat G(x_h)'\rangle \hat G(x_h)
      -\langle D\phi_h(x),\hat G(x_h)\rangle \hat G(x_h)'
  \end{align*}
  and since $\hat G(x_h)'= \cO_{x_h}DG_{x_h}\hat G(x_h)$ we
  finally obtain
$
  \langle P_x^h,P_x^h\rangle'=2\langle \cO_{x_h}DG_{x_h}P_x^h ,P_x^h\rangle.
$

Along this proof, we are implicitly assuming that
$x_h=\phi_hx$ is in the range of $\exp_x$, identifying the
tangent spaces $T_xM$ and $T_{x_h}M$ through
$D(\exp_x)_{\exp_x^{-1}x_h}$ and writing $DG_y v$ for
$\nabla_vG(y)$ with $y\in M, v\in T_yM$, where $\nabla$ is
the Levi-Civita connection associated to the Riemannian
metric of $M$.

On the one hand, for a singular vector $u_h\in G(x)^\perp$
corresponding to the largest singular value\footnote{The largest
  coefficient of the orthogonal diagonalization of the quadratic form
  $v\mapsto\langle (P_x^h)^*P_x^h v,v\rangle$, where $*$ denotes the
  adjoint operator with respect to the inner product.} of $P_x^h$ and
$|h|$ sufficiently small, we have for some intermediate value $s=s(h)$
so that $0<|s(h)|<|h|$
\begin{align*}
  \|P_x^h\|^2
  =
  1+2\int_0^h\langle \cO_{x_s}DG_{x_s}P_x^su_h
  ,P_x^su_h\rangle\,ds
  =
  1+2h\langle \cO_{x(s)}DG_{x(s)}P_x^{s(h)}u_h
  ,P_x^{s(h)}u_h\rangle.
\end{align*}
Therefore $P^{s(h)}_xu_h$ is an eigenvector associated to the largest
eigenvalue of\footnote{ This is given by
  $\sup\{\Re\lambda: \lambda\in\spec(\cO_{x(s)}DG_{x(s)})\}$ which is
  the largest real coefficient in the orthogonal diagonalization of
  the quadratic form
  $\langle \cO_{x(s)}DG_{x(s)}v,v\rangle=\sum_ia_iX_i^2$ where
  $v=\sum_iX_ie_i$ for some orthonormal basis $e_1,\dots,e_d$ of
  $T_xM$.}  $(\cO_{x(s)}DG_{x(s)}+(\cO_{x(s)}DG_{x(s)})^*)/2$.  On the
other hand, for $v_h\in G(y)^\perp$ corresponding to the largest
singular value of $P_y^h$ we can find some intermediate value
$\bar s=\bar s(h)$ so that $0<|\bar s(h)|<|h|$
\begin{align*}
  \eqref{eq:diffpsi}
  &=
    \frac{
    \langle \cO_{x_s}DG_{x_s}P_x^{s(h)}u_h ,P_x^{s(h)}u_h\rangle
    -
    \langle \cO_{\bar y_s}DG_{\bar y_s}P_y^{\bar s(h)}v_h
    ,P_y^{\bar s(h)}v_h\rangle}
    {\|P^h_y\|^2}.
\end{align*}
Hence, when $h\to0$, using the compactness of the unit
sphere, we get $u,v$ accumulation unit vectors of the
families $(u_h),(v_h)$ so that
\begin{align}\label{eq:diffSVD}
  \lim_{\delta\searrow0}\sup_{0<|h|<\delta}
  \left|\frac1h\ln\frac{\|P^h_x\|}{\|P^h_y\|}\right|
  \le
    \left|\langle \cO_{x}DG_{x}u ,u\rangle
    -
    \langle \cO_{y}DG_{y}v ,v\rangle\right|
\end{align}
and both $u,v$ are eigenvectors associated to the
largest eigenvalues of the operators
$(\cO_{x}DG_{x} +(\cO_{x}DG_{x})^*)/2$  and
$(\cO_{y}DG_{y} +(\cO_{y}DG_{y})^*)/2$, respectively.
Since these are  symmetric operators, we apply the following.

\begin{lemma}
  {\cite[Theorem III.6.11]{Kato95}} Let $A,B$ be selfadjoint
  operators of $\RR^d$ and $C=A-B$, whose eigenvalues
  repeated with multiplicities we write as
  $\alpha_1\le\dots\le\alpha_d$, $\beta_1\le\dots\le\beta_d$
  and $\gamma_1\le\dots\le\gamma_d$ respectively. Then
  $\sum_{i=1}^d|\alpha_i-\beta_i| \le\sum_{i=1}^d|\gamma_i|$.
\end{lemma}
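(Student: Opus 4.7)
The strategy is to establish the result by invoking the Lidskii--Wielandt majorization principle, which is the central matrix perturbation inequality behind a statement of this type. Writing the eigenvalues of $C = A - B$ in decreasing order as $\gamma^{\downarrow}_1 \ge \cdots \ge \gamma^{\downarrow}_d$, I plan to show that the differences $(\alpha_i - \beta_i)_{i=1}^d$ are majorized by $(\gamma^{\downarrow}_i)_{i=1}^d$ in a strong enough form, then split into positive and negative parts and sum.

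First, I would recall the Courant--Fischer min-max characterization: for a selfadjoint $T$ on $\RR^d$ with eigenvalues $\lambda_1(T) \le \cdots \le \lambda_d(T)$,
\begin{align*}
\lambda_k(T) = \min_{\substack{V \subset \RR^d\\\dim V = k}} \max_{\substack{v \in V\\ \|v\|=1}} \langle Tv,v\rangle.
\end{align*}
Taking the subspace optimal for $B$ as a test subspace for $A = B + C$ yields the usual Weyl estimate $\gamma_1 \le \alpha_k - \beta_k \le \gamma_d$ for each $k$. The sharper tool I need is Lidskii's inequality: for any indices $1 \le i_1 < \cdots < i_k \le d$,
\begin{align*}
\sum_{j=1}^k (\alpha_{i_j} - \beta_{i_j}) \;\le\; \sum_{j=1}^k \gamma^{\downarrow}_j.
\end{align*}
I would derive this from Ky Fan's maximum principle, $\sum_{j=1}^k \gamma^{\downarrow}_j = \max\{\operatorname{tr}(PC) : P \text{ is an orthogonal projection of rank } k\}$, combined with the observation that the left hand side can be expressed as $\operatorname{tr}(Q(A-B))$ for a suitable orthogonal projection $Q$ of rank $k$ built from spectral subspaces of $A$ and $B$, via an interlacing/comparison argument.

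Given Lidskii's inequality, I would split the index set as $I_+ = \{i : \alpha_i \ge \beta_i\}$ and $I_- = \{i : \alpha_i < \beta_i\}$ with $|I_\pm| = k_\pm$. Applying the inequality to $I_+$ and applying it to $-C$ (equivalently, swapping the roles of $A$ and $B$) for the set $I_-$ yields
\begin{align*}
\sum_{i \in I_+} (\alpha_i - \beta_i) \;\le\; \sum_{j=1}^{k_+} \gamma^{\downarrow}_j \;\le\; \sum_{j:\gamma_j > 0} \gamma_j \qand \sum_{i \in I_-} (\beta_i - \alpha_i) \;\le\; \sum_{j:\gamma_j<0}(-\gamma_j).
\end{align*}
Adding these two inequalities gives $\sum_{i=1}^d |\alpha_i - \beta_i| \le \sum_{j=1}^d |\gamma_j|$, as required.

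The main technical obstacle is the proof of Lidskii's inequality itself; a self-contained derivation typically requires either Ky Fan's principle together with a careful interlacing argument to produce the projection $Q$, or a detour through the theory of doubly stochastic matrices and Birkhoff's theorem to recognize the vector of differences $(\alpha_i - \beta_i)$ as lying in the convex hull of permutations of $(\gamma_i)$. Everything else reduces to routine bookkeeping once this majorization is in hand.
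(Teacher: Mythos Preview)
The paper does not prove this lemma at all: it is quoted verbatim as \cite[Theorem III.6.11]{Kato95} and used as a black box, so there is no ``paper's own proof'' to compare against. Your outline via the Lidskii--Wielandt majorization is the standard route to this inequality and is correct as sketched; in particular, the splitting into $I_+$ and $I_-$ together with the bound $\sum_{j=1}^{k}\gamma^{\downarrow}_j\le\sum_{\gamma_j>0}\gamma_j$ (valid for every $k$, since appending nonpositive terms can only decrease the partial sum) cleanly yields the conclusion. You are right that all the real content sits in Lidskii's inequality itself; your suggested derivation through Ky Fan's trace-maximization principle is one of the standard ones, though the step ``the left hand side can be expressed as $\operatorname{tr}(Q(A-B))$ for a suitable rank-$k$ projection $Q$'' hides a nontrivial interlacing argument that would need to be spelled out in a fully self-contained proof.
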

As a direct consequence of this result, recall the standard
bound for the spectral radius
\begin{align*}
r(C)=\sup\{|\lambda|:
\lambda\in\spec(C)\}\le\|C\|=\|A-B\|
\end{align*}
and so $|\alpha_i-\beta_i|\le d r(C)\le d\|A-B\|$ for each
$1\le i\le d$.

Going back to \eqref{eq:diffSVD} writing
$A=(\cO_{x}DG_{x} +(\cO_{x}DG_{x})^*)/2$ and
$B=(\cO_{y}DG_{y} +(\cO_{y}DG_{y})^*)/2$ we get\footnote{Recall that
  we are implicitly assuming that $y$ is in the range of $\exp_x$.}
$~\eqref{eq:diffSVD} \le \dim
M\cdot\|\cO_{x}DG_{x}-\cO_{y}DG_{y}\|$. This together with
\eqref{eq:diffpsi} implies
\begin{align*}
  D_+(x)
  &=
  \lim_{\delta\searrow0}\sup_{0<h<\delta}\frac{\ln\|P^h_x\|}h
  \le
  \lim_{\delta\searrow0}\left(
  \sup_{0<h<\delta}\frac{\ln\|P^h_y\|}h
  +
  \sup_{0<h<\delta}
  \left|\frac1h\ln\frac{\|P^h_x\|}{\|P^h_y\|}\right|
  \right)
  \\
  &=
  D_+(y)+\dim M\cdot\|\cO_{x}DG_{x}-\cO_{y}DG_{y}\|)
\end{align*}
and since we can exchange $x$ and $y$ this completes the proof of the
continuity of $D_+(x)$. Moreove, we can argue with $h\nearrow0$ using
the same inequality~\eqref{eq:diffSVD}, thus obtaining continuity for
$D_-(x)$ also . This completes the proof of item (2).

For item (1), % In the particular case where $\cO^+_G(x)$
% is away from $\sing(G)$, that is, if
% $\omega_G(x)\cap\sing(G)=\emptyset$, then the continuity of
% $D(q)$ as a function of $q$ ensures that $D(q)$ has a
% minimum on the compact subset $\omega_G(x)$ and so also in
% $\cO^+_G(x)$.
we note that
$\|P^h_xv\|\ge\|(P^h_x)^{-1}\|^{-1}\|v\|, v\in G(x)^\perp$
and that
  \begin{align*}
    (P^h_x)^{-1}=\cO_x\circ D\phi_h(x)^{-1}\circ\cO_{\phi_h x}
    =\cO_x\circ D\phi_{-h}(\phi_hx)\circ\cO_{\phi_h x}
  \end{align*}
  so
  $\|(P^h_x)^{-1}\|\le\|D\phi_{-h}(\phi_hx)\|\le e^{|h|L}$
  from Gronwall's Inequality and consequently
  $\ln\|P^h_x\|^{1/h}\ge-L$ for all
  $h\in\RR, x\in M\setminus\sing(G)$.

  Hence $D_\pm(x)\ge -L$ and analogously $D_\pm(x)\le L$ for all
  $x\in M\setminus\sing(G)$. 

  For item (3): from the continuity of $x\mapsto D(x)$ and
  Lemma~\ref{le:subdiff} we deduce the
  relation~\eqref{eq:lnLPFint}, since $t\mapsto\psi_tx$
  satisfies
  \begin{align*}
    \limsup_{h\to0+}\frac{\psi_{t+h}x-\psi_tx}h
    &\le
    \limsup_{h\to0+}\frac{\psi_h(\phi_tx)+\psi_tx-\psi_tx}h
    =
    D_+(x)\qand
    \\
    \liminf_{h\to0-}\frac{\psi_{t+h}x-\psi_tx}h
    &\ge
      \liminf_{h\to0-}\frac{\psi_h(\phi_tx)+\psi_tx-\psi_tx}h
      =D_-(x).
  \end{align*}
  Hence, for any partition $0=t_0<t_1<\dots<t_k=T$ of the
  interval $[0,T]$ with width
  $\delta=\sup_{1\le i\le k}(t_{i+1}-t_i)$ we get on the one
  hand
\begin{align*}
  \psi_Tx
  &=
    \sum_{i=1}^k\frac{\psi_{t_{i+1}}x-\psi_{t_i}x}{t_{i+1}-t_i}(t_{i+1}-t_i)
    \le
    \sum_{i=1}^k
    \sup_{0<h<\delta}\left(\frac{\psi_h(\phi_{t_i}x)}h\right) (t_{i+1}-t_i)
\end{align*}
and since
$\lim_{\delta\searrow0}
\sup_{0<h<\delta}\left(\frac{\psi_h(\phi_{t_i}x)}h\right)
=D(\phi_{t_i}x)$ we obtain
$ \psi_Tx\le\int_0^TD_+(\phi_sx)\,ds$. On the other hand
\begin{align*}
  \psi_Tx
  &=
    \sum_{i=1}^k\frac{\psi_{t_{i}}x-\psi_{t_{i+1}}x}{t_{i}-t_{i+1}}(t_{i+1}-t_i)
    \ge
    \sum_{i=1}^k
    \inf_{-\delta<h<0}\left(\frac{\psi_h(\phi_{t_{i+1}}x)}h\right) (t_{i+1}-t_i)
\end{align*}
and since
$\lim_{\delta\searrow0}
\inf_{-\delta<h<0}\left(\frac{\psi_h(\phi_{t_{i+1}}x)}h\right)
=D(\phi_{t_{i+1}}x)$ we also get $\psi_Tx\ge\int_0^T
D_-(\phi_sx)\,ds$ and obtain~\eqref{eq:lnLPFint}.
  % This implies in particular
  % \begin{align*}
  %   \psi_{t+s}x
  %   &=
  %   \int_0^{t+s}D(\phi_ux)\,du
  %     =\int_0^tD(\phi_ux)\,du+\int_t^{t+s}D(\phi_ux)\,du
  %   \\
  %   &=
  %     \ln\|P^t_x\|+\int_0^sD(\phi_u\phi_tx)\,du
  %     =
  %     \ln\|P^t_x\|+\ln\|P^s_{\phi_tx}\|
  %     =
  %     \psi_tx+\psi_s(\phi_tx)
  % \end{align*}
  % and so $t\mapsto\phi_tx$ becomes an additive function for
  % any given $x\in M\setminus\sing(G)$.
  This completes the proof of the lemma.
\end{proof}  

%%%%%%%%%%%%%%%%%%%%%%%%%%%%%%%%%%%%%%%%%%%%%%%%%%

\begin{proof}[Proof of Lemma~\ref{le:equivstat}]
  Let us assume that
  $-\xi=\liminf_{T\to+\infty}\ln\|D\phi_T(x)\|^{1/T}<0$. Then for each
  $\epsilon>0$ there exists a sequence $T_n\nearrow\infty$ so that for
  all $v\in T^1_xM$ we have from Lemma~\ref{le:subdiff}
  \begin{align*}
    H_G(T_n,v)=\int_0^{T_n}D_G(\Phi_sv)\,ds
    =
    \ln\|D\phi_{T_n}(x)\cdot v\|\le \ln\|D\phi_{T_n}(x)\|\le-\xi T_n.
  \end{align*}
  This proves the first statement of the lemma. The proof of the other
  statements is similar and left to the reader.
\end{proof}

%%%%%%%%%%%%%%%%%%%%%%%%%%%%%%%%%%%%%%%%%%%%%%%%%%

\begin{proof}[Proof of Lemma~\ref{le:Gronwall}]
  We can assume without loss of generality that
  $\phi_{[0,t]}q_n$ is in the range of $\exp_\sigma$ for all
  $n\ge1$ so that we can identify all tangent spaces.  Then
  we note that
  \begin{align}\label{eq:Gronwall1}
    \|P_{q_n}^h-\cO_{\phi_hq_n}D\phi_h\sigma\|
    =
    \|\cO_{\phi_hq_n}D\phi_hq_n-\cO_{\phi_hq_n} e^{h
    DG_\sigma}\|
    \le
    \|D\phi_hq_n-e^{h DG_\sigma}\|
  \end{align}
  so we need only estimate the last norm. For that we use
  Gronwall's Inequality as follows: $D\phi_h(z)$ is the
  solution of the Linear Variational Equation
  $\dot Z=DG_{\phi_h z}\cdot Z$ with $Z(0)=Id$, $z=q_n$ or
  $\sigma$, for $h\in[-\epsilon,\epsilon]$ and $\epsilon>0$
  small, in the coordinates of a local chart of $M$
  containing both $q_n$ and $\sigma$. Then we can write
\begin{align*}
    D\phi_h(q_n)-e^{h DG_\sigma}
    &=
      \int_0^h\big(
      DG_{\phi_s q_n}\cdot D\phi_s(q_n)-
      DG_{\sigma}\cdot e^{sDG_\sigma}\big)\,ds
    \\
    &=
      \int_0^h DG_{\phi_sq_n}\cdot\big(D\phi_s(q_n)-
      e^{sDG_\sigma}\big)\,ds
      +
      \int_0^h\big(DG_{\phi_sq_n}-DG_{\sigma}\big)
      \cdot e^{sDG_\sigma}\,ds
      % \\
      % &=
      % \alpha(h)+\int_0^h DG_{\phi_h
      % x}\cdot\big(D\phi_h(x)-
      % D\phi_h(y)\big)\,ds
  \end{align*}
  and taking norms we obtain
  $ \beta(h)\le\alpha(h)+\int_0^h\gamma(s)\beta(s)\,ds$
  where we set $\gamma(s)=\|DG_{\phi_sq_n}\|$;
  $ \alpha(h)=
  \int_0^h\left\|\big(DG_{\phi_sq_n}-DG_{\sigma}\big)\right\|
  \cdot \|e^{sDG_\sigma}\|\,ds$ and
$  \beta(h)=\|D\phi_h(q_n)-e^{h DG_\sigma}\|$. We
  conclude\footnote{See e.g.\cite[Lemma 4.7]{PM82} and
    \cite[Theorem 2.1]{chicone06}.}
  $ \beta(h)\le\alpha(h)\exp\int_0^h\gamma(s)\,ds$.

  Now we
  have $\gamma(h)\le L$ and $\|e^{h DG_\sigma}\|\le e^{hL}$,
  so we arrive at
  \begin{align*}
    \alpha(h)\le he^{hL}\sup_{0\le s\le h}
    \left\|DG_{\phi_sq_n}-DG_\sigma\right\|
    =
    \bar\delta_n he^{hL}.
  \end{align*}
  ensuring that we can bound~\eqref{eq:Gronwall1} by
  $\bar\delta_n hL e^{hL}$.

  Finally, if we know that the trajectory $\phi_{[0,t]}q_n$
  is close to $\sigma$, then we can perform the above
  integrations and estimations for $h=t$ and complete the
  proof of the lemma.
  \end{proof}

We now prove a second technical lemma.

\begin{proof}[Proof of Lemma~\ref{le:cusptransversal}]
  We can obtain $\Sigma'$ simply writting
  $T_\sigma M=E^s(\sigma)\oplus E^u(\sigma)\cong
  \RR^{\dim M-1}\times\RR$ and setting
  $\Sigma'=\exp_\sigma\Sigma_0$ where
  $\Sigma_0=\{(u,v)\in E^s\times E^u: v=\|u\|^2\}$. Indeed,
  the linear vector field
$
  w=(w_s,w_u)\in T_\sigma M \mapsto DG_\sigma w=(Aw_s,\xi
  w_u),
$
  for fixed $\xi>1$ and $A\in GL(E^s(\sigma))$ with
  $\Im\spec(A^s)\subset\RR^-$, over $z\in\Sigma_0$ has angle
  with the vertical direction $(0,1)$ which tends to zero
  when $z\to0$. In fact,
  \begin{align*}
    \cos\angle((Az_s,\xi z_u)),(0,1))=\frac{\xi
    z_u}{\|(Az_s,\xi z_u)\|}
    \xrightarrow[z\to0]{z\in\Sigma'}0
    \iff
    \frac{|z_u|}{\|z_s\|}\xrightarrow[z\to0]{z\in\Sigma'}0
  \end{align*}
  is a consequence of $z_u=\|z_s\|^2$ (that is, $z\in\Sigma'$)
  together with $z\to0$. Since the vector $(0,1)$ is the direction of
  $E^u_\sigma$, we have
    \begin{align}\label{eq:constangle}
    \cos\angle(G(z),E^u_\sigma)\xrightarrow[z\to\sigma]{z\in\Sigma'}0
  \end{align}
  in the linearized case.  Hence, $G$ and $\Sigma'$ satisfy
  \eqref{eq:constangle} in a small enough neighborhood of
  $\sigma$, because the vector field
  $\tilde G=D(\exp_\sigma)^{-1}G$ on a neighborhood of $0$
  in $T_\sigma M$ satisfies for each $v\in T_\sigma M$
  \begin{align*}
    D\tilde G_0 v
    &=
      \lim_{t\to0}\frac{D(\exp_\sigma)^{-1}G(\exp_\sigma(tv))}t
      =
      D(\exp_\sigma)^{-1}\lim_{t\to0}\frac{G(\exp_\sigma(tv))}t
      =
      DG_\sigma v.
  \end{align*}
  Consequently
  $\|DG_\sigma w - D(\exp_\sigma)^{-1}_wG(\exp_\sigma
  w)\|/\|w\|\xrightarrow[w\to0]{}0$ and thus
  \begin{align*}
   \left\langle\frac{G(\exp_\sigma w)}{\|G(\exp_\sigma w)\|},
    D(\exp_\sigma)_0\cdot(0,1)\right\rangle
    =
    \left\langle \frac{D(\exp_\sigma)_w\tilde
    G(w)}{\|D(\exp_\sigma)_w\tilde G(w) \|},
    D(\exp_\sigma)_0\cdot(0,1)\right\rangle
  \end{align*}
  has the same limit as $w\to0$ along $\Sigma_0$ as \eqref{eq:constangle},
  since $D(\exp_\sigma)_0=Id$ and $\Sigma'=\exp_\sigma \Sigma_0$.

  For the sectional property of $\Sigma$, by the
  Hartman-Grobman Linearization Theorem, the flow of
  $DG_\sigma$ in a neighborhood $V$ of $0$ in $T_\sigma M$
  is topologically conjugated to the flow of $G$ is a
  neighborhood $U$ of $\sigma$ in $M$: for any given
  $\delta>0$ we can find a homeomorphism $h:V\to U$ such
  that $\|Id-h^{-1}\exp_\sigma\|<\delta$ and
  $\phi_th(w)=h(e^{t DG_\sigma}w)$ for $w\in T_\sigma M$
  satisfying $e^{sDG_\sigma}w\in V,\forall0\le s\le t$.

  It is thus enough to prove that
  $\tilde\Sigma=h^{-1}(\Sigma\setminus\{\sigma\}
  =h^{-1}\exp_\sigma(\Sigma_0\setminus\{0\})$ is a
  Poincar\'e section of the linearized flow. Since
  $h^{-1}\exp_\sigma$ is close to the identity map, without
  loss of generality we can assume that coordinates have
  been chosen on $T_\sigma M$ so that
  \begin{itemize}
  \item $\tilde\Sigma$ is a graph of a Lipschitz function
    $g:E^s\cap B(0,2)\to E^u\cong\RR$ satisfying $\lip(g)<1$,
    $g(u)\ge0$ and $g(u)=0\implies
    u=0$. % $\|u_1\|<\|u_2\|\implies g(u_1)<g(u_2)$.
    We set $a=\inf\{ g(u): u\in E^s, \|u\|=1\}$.
  \item for $z=(z_s,z_u)$ with $\|z_s\|=1$ and
    $0<z_u<g(z_s), t>0$, from
    $e^{t DG_\sigma}z=(e^{At}z_s,e^{\xi t}z_u)$ we deduce
  \begin{align*}
    a=e^{\xi t}z_u\iff t=\ln (a/z_u)^{1/\xi} \qand
    \|e^{At}z_s\|\le e^{-\lambda t}=(z_u/a)^{\lambda/\xi}
    \xrightarrow[z_u\to0]{}0,
  \end{align*}
  where $\lambda>0$ is such that $-\lambda\ge\Im(\alpha),
  \forall\alpha\in\spec(A)$.
  \end{itemize}
  Thus the function
  $F(x,y)=g(x)-y, (x,y)\in \RR^{\dim M-1}\times \RR$ is such
  that $F(z_s,z_u)=g(z_s)-z_u>0$ and
  $F(e^{tDG_\sigma}z)=F(e^{At}z_s)-a<0$ for all $z_u$
  sufficiently close to $0$, showing that there exists
  $s=s(z_s,z_u)\in(0,t)$ such that $F(e^{sDG_\sigma}z)=0$,
  that is, $e^{sDG_\sigma}z\in\tilde\Sigma$.
  
  Moreover, if $F(e^{\bar s DG_\sigma} z)=0$ for some
  $\bar s>s$, then
  \begin{align*}
    z_u e^{\xi s}(1-e^{\xi(\bar s-s)})
    &=
    g(e^{A\bar s}z_s)-g(e^{As}z_s)
    \le
    \lip(g)\|e^{As}(e^{A(\bar s -s)}-I)z_s\|
  \end{align*}
which implies for some $0<\zeta<\bar s-s$ by the Mean Value Inequality
\begin{align*}
  z_u
  &\le
    \lip(g)\frac{e^{-(\lambda+\xi)s}}{1-e^{\xi(\bar s-s)}}
    \|e^{A(\bar s -s)}-I\|
    \le
    \lip(g)\frac{(z_u/a)^{1+\lambda/\xi}}{1-e^{\xi(\bar s-s)}}
    \|A e^{A\zeta}(\bar s-s)\|
\end{align*}
and so we arrive at
\begin{align*}
  0<a
  \le
  \lip(g)\left(\frac{z_u}a\right)^{\lambda/\xi}
  \frac{\xi(\bar s-s)}{1-e^{\xi(\bar
  s-s)}}\frac{\|A\|}{\xi}e^{-\lambda(\bar s -s)}
  \le
  Const\cdot z_u^{\lambda/\xi}
\end{align*}
yielding a contradiction for all small enough $z_u>0$. We
conclude that there exists a unique $s=s(z)$ so that the
future trajectory of $z$ under the flow of $G$ intersects
$\Sigma'$ for all $z$ in a small enough neighborhood of
$\sigma$.

  Hence $\Sigma'\setminus\{\sigma\}$ is a Poincar\'e section
  for the flow $G$, completing the proof of the lemma.
\end{proof}

Finally, we prove the Lemma of Pliss for flows.

\begin{proof}[Proof of Theorem~\ref{thm:plissflows}]
  Observe first that we can assume without loss of
  generality that $H$ is of class $C^2$. Indeed, let $H$ be
  differentiable satisfying $H (0) = 0, H (T ) < cT$ and
  $\inf(H') > A$. 

  If the statement of the theorem is true for any
$\tilde H:[0,T]\to\RR$ of class $C^2$, then we choose such
$\tilde H$ so that $\tilde H(0)=0$ and
\begin{align*}
\sup_{t\in[0,T]}\{|H(t)-\tilde H(t)|,|H'(t)-\tilde
  H'(t)|\}<\tilde\epsilon
\end{align*}
for some small $0<\tilde\epsilon<\epsilon$. We obtain
\begin{align*}
  \tilde H(T)&=(\tilde H-H)(T)+H(T)<\tilde\epsilon+cT\qand
  \\
  \inf \tilde H'
  &=\inf(H'-(\tilde H'-H'))\ge A-\tilde\epsilon,
\end{align*}
and writting for $\delta>0$
\begin{align*}
  \tilde \cH_{\delta}=\{\tau \in [0, T ] : \tilde H (s)
  -\tilde H (\tau ) < (c +\delta)(s - \tau ), \,
  \text{for all}\, s \in [\tau, T ]\}.
\end{align*}
then we get $|\tilde\cH_\epsilon|\ge T\tilde\theta$ with
\begin{align*}
  \tilde\theta
  =\frac{\epsilon}{c+\tilde\epsilon/T+\epsilon-(A-\tilde\epsilon)}
  =\frac{\epsilon}{c+\epsilon-A+\tilde\epsilon(1+1/T)}
\end{align*}
and also
$\tilde\cH_\epsilon\subset\cH_{\epsilon+\tilde\epsilon(1+1/T)}$.

Therefore, since $\cH_\epsilon=\cup_{n\ge1}\cH_{\epsilon+1/n}$,
we conclude that for each small enough $\tilde\epsilon>0$ we
get $\tilde H$ of class $C^2$ which is
$\tilde\epsilon$-$C^1$-close to $H$ and
$|\cH_\epsilon|\ge|\tilde \cH_\epsilon|\ge\tilde\theta
T$. Letting $\tilde\epsilon\to0$ we obtain
$\tilde\theta\to \theta=\epsilon/(c+\epsilon-A)$ as we need.

Let now $\epsilon > 0, A, c$ and $H$ be as in the statement
of the theorem with $H$ of class $C^2$, and define
$G(s) = H(s) - (c + \epsilon)s$.  Since we have already
shown that approximating $H$ in the $C^1$ topology does not
change the conclusions of the statement of the theorem, we
may also assume without loss of generality that $G$ does not
have degenerate critical points; that is, $G'(x) = 0$ if and
only if $G''(x) \neq 0$; and, moreover, that its critical
values are all distinct. This can be done replacing $H$ by a
$C^2$-close Morse function in what follows.

\begin{figure}[htpb]
  \centering
  \includegraphics[width=11cm]{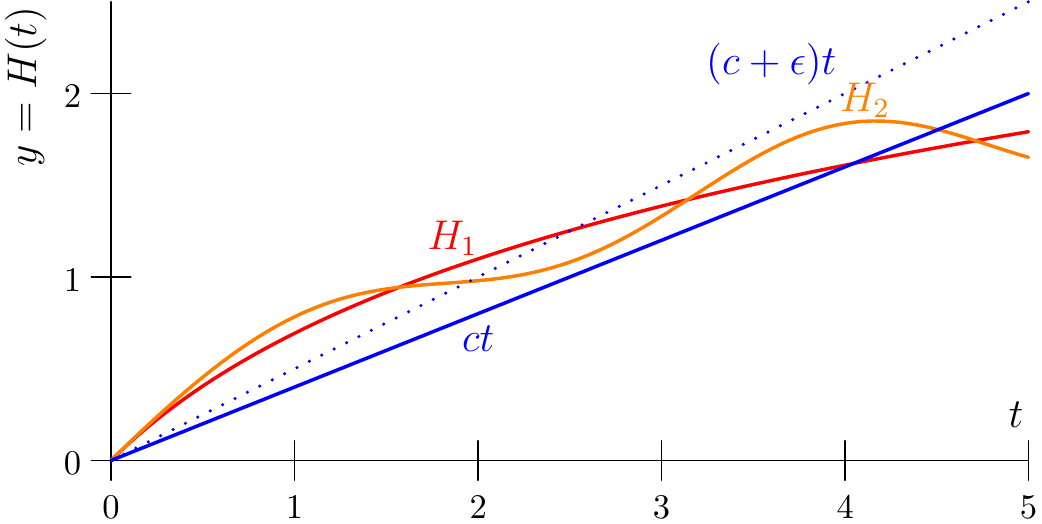}
  \includegraphics[width=11cm]{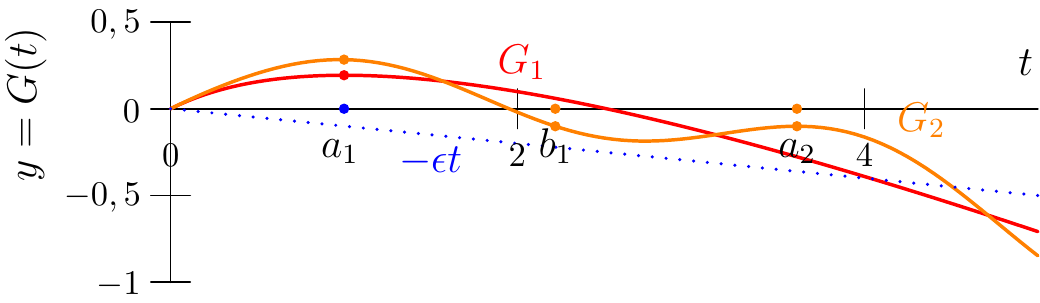}
  \caption{\label{fig:Plissfluxo} Illustrative example with the graphs
    of $H_1(t)=\log(t+1)$, $H_2(t)=(1+\sin(2t)/7)\cdot H_1(t)$,
    $c=1/2$ and $\epsilon=1/10$ above; and also
    $G_i(t)=H_i(t)-(c+\epsilon)t, i=1,2$ and the points $a_i,b_i$
    below. For $G_1$ we only have $a_1=1$; but for $G_2$ we have
    $a_1<b_1<a_2$.}
\end{figure}

Now $G(0) = 0$ and $G(T ) < -\epsilon T$, so it is possible to define
two (perhaps finite) increasing sequences, say $(a_i)_{i=1}^n$
consisting of critical points of $G$ such that $G(x) < G(a_i)$
\emph{for every} $x > a_i$ (if this sequence is finite, we set the
last point $a_{n} = T$; otherwise $a_i \to T$) and $(b_i)_{i=1}^n$ as
the smallest $b > a_i$ such that $G(b_i ) = G(a_{i+1})$; see
Figure~\ref{fig:Plissfluxo}.

More precisely, we define the sequence $(a_i)$ and $(b_i)$
recursively: $a_1=0$ if $G(t)<0$ for $t>0$; otherwise
$a_1=\inf\{s>0: G'(s)=0 \qand G(s)>G(t), \forall t>s\}$; now
inductively for $i\ge1$
\begin{align*}
  a_{i+1}&=\inf\{s>a_i: G'(s)=0 \qand G(s)>G(t), \forall
           t>s\} \qand
  \\
  b_i&=\inf\{s>a_i: G(s)=G(a_{i+1})\}.
\end{align*}
Clearly $b_i \le a_{i+1}$, and 
\begin{itemize}
\item $a_1$ is a global maximum of $G$ and $G(a_1)\ge0$;
\item each $a_i$ is a local maximum of $G$;
\item $G'(t)\neq0$ for $a_i<t\le b_i$,
  otherwise there would be a critical point
  $\xi<b_i<a_{i+1}$ with the properties of $a_{i+1}$,
  contradicting the inductive definition. In addition,
\item 
  $G(t)\le G(a_{i+1})$ for $b_i<t<a_{i+1}$ for otherwise
  there would be a critical point $\xi\in(b_i,a_{i+1})$ with
  the properties of $a_{i+1}$, again contradicting the
  inductive definition.
\end{itemize}
Letting $B=-\inf G'$, then the Mean Value Theorem ensures that
\begin{align*}
\frac{G(a_i ) - G(b_i)}B \le b_i - a_i
\end{align*}
and we claim that the union $\cup_i (a_i , b_i )$ is
contained in $\cH_\epsilon$. Indeed
\begin{align*}
  H (s) - H (\tau )
  &=
    H (s)-(c+\epsilon)s-[H (\tau)-(c +\epsilon)\tau]
    + (c + \epsilon)(s - \tau )
  \\
  &= G(s) - G(\tau ) + (c + \epsilon)(s - \tau ).
\end{align*}
and so $  H (s) - H (\tau ) < (c +
\epsilon)(s - \tau )$ if, and only if, $G(s) < G(\tau)$.

Now we let $\tau \in (a_i , b_i )$ for some $i$ and argue by
contradiction: let us assume that for a given $t > \tau$ we
have $G(t) \ge G(\tau )$. Since there are no critical points
in $(\tau,b_i]$, we must have $t\ge b_i$. But this is
impossible, because $G(t)\le G(a_{i+1})=G(b_i)<G(\tau)$ for
$b_i<t<a_{i+1}$ and $G(t)\le G(a_{i+1})$ for all $t\ge
a_{i+1}$ by construction.
This contradiction shows that $\tau\in\cH_\epsilon$, as claimed.
Therefore
\begin{align*}
|\cH_\epsilon|
&\ge
\sum_{i=1}^n
(b_i - a_i)
\ge
\frac1B \sum_{i=1}^n [G (a_i ) - G(b_i )]
  \\
  &=\frac1B\sum_{i=1}^n
[G(a_i ) -G(a_{i+1} )] =
\frac1B[G(a_1 ) - G(T )].
\end{align*}
Since $G(a_1)\ge 0$ we obtain
$
  |\cH_\epsilon| \ge -\frac{G(T)}B \ge \frac{\epsilon T}B.
$
Notice that since $G'(t) = H'(t) - (c + \epsilon)$, to get a
non-trivial result we need $B>0$, that is,
$A<\inf H'<c+\epsilon$.  Finally
\begin{align*}
  B=
  -\inf  G'
  =
  \sup (c+\epsilon-H')
  =
  c+\epsilon-\inf H'
  \le
  c+\epsilon-A
\end{align*}
and so
$
  |\cH_\epsilon|\ge\frac{\epsilon}B
  T\ge\frac{\epsilon}{c+\epsilon-A} T
$
completing the proof of the theorem, by setting
$\theta=\frac{\epsilon}{c+\epsilon-A}$.
\end{proof}

%%%%%%%%%%%%%%%%%%%%%%%%%%%%%%%%%%%%%%%%%%%%%%%%%%

  \bibliographystyle{abbrv}

%\bibliography{../bibliobase/bibliography}

\end{document}